 \numberwithin{equation}{section}
 \def\black{\color{black}}
\renewcommand{\epsilon}{\varepsilon}
\renewcommand{\phi}{\varphi}
\newcommand{\ca}{\ensuremath{\mathcal A}\xspace}
\newcommand{\cb}{\ensuremath{\mathcal B}\xspace}
\newcommand{\cc}{\ensuremath{\mathcal C}\xspace}
\newcommand{\cd}{\ensuremath{\mathcal D}\xspace}
\newcommand{\ce}{\ensuremath{\mathcal E}\xspace}
\newcommand{\cg}{\ensuremath{\mathcal G}\xspace}
\newcommand{\ch}{\ensuremath{\mathcal H}\xspace}
\newcommand{\ci}{\ensuremath{\mathcal I}\xspace}
\newcommand{\cj}{\ensuremath{\mathcal J}\xspace}
\newcommand{\ck}{\ensuremath{\mathcal K}\xspace}
\newcommand{\cv}{\ensuremath{\mathcal V}\xspace}
\newcommand{\bbz}{\ensuremath{\mathbb Z}\xspace}
\newcommand{\atwo}{{\mathbf 2}}
\newcommand{\dsdr}{shrinkable}
\newcommand{\hh}{\textnormal{h}}
\newcommand{\id}{\textnormal{id}}
\newcommand{\cat}{\textnormal{-}\ensuremath{\mathbf{Cat}}\xspace}
\newcommand{\Ab}{\ensuremath{\mathbf{Ab}}\xspace}
\newcommand{\ER}{\ensuremath{\mathbf{ER}}\xspace}
\newcommand{\Cat}{\ensuremath{\mathbf{Cat}}\xspace}
\newcommand{\Set}{\ensuremath{\mathbf{Set}}\xspace}
\newcommand{\SSet}{\ensuremath{\mathbf{SSet}}\xspace}
\newcommand{\QCat}{\ensuremath{\mathbf{QCat}}\xspace}
\newcommand{\Cart}{\ensuremath{\mathbf{Cart}}\xspace}
\newcommand{\Gray}{\ensuremath{\mathbf{Gray}}\xspace}
\newcommand{\cgtop}{\ensuremath{\mathbf{CGTop}}\xspace}
\newcommand{\op}{\ensuremath{^{\textnormal{op}}}}
\newcommand{\fib}{\ensuremath{_{\textnormal{fib}}}}
\newcommand{\ev}{\textnormal{ev}}
\newcommand{\Eq}{\ensuremath{\mathbf{Eq}}\xspace}
\DeclareMathOperator{\cod}{cod}
\DeclareMathOperator{\dom}{dom}
\DeclareMathOperator{\Inj}{Inj}
\DeclareMathOperator{\Ch}{Ch}
\DeclareMathOperator{\Ps}{Ps}
\DeclareMathOperator{\colim}{colim}
\newcommand{\TAlg}{{\textnormal{T-Alg}}}
\newcommand{\SAlg}{{\textnormal{S-Alg}}}
\newcommand{\cone}[1]{#1\operatorname{{}-cone}}
\newcommand{\en}[1]{\mathbb{#1}}
\newcommand{\two}{\ensuremath{\mathbf{2}\xspace}}
\newcommand{\ox}{\otimes}
\newcommand{\x}{\times}
\newcommand{\<}{\langle}
\renewcommand{\>}{\rangle} 
\def\1c#1{\stackrel{#1}{\to}}
  \newtheorem{proposition}{Proposition}[section]
  \newtheorem{lemma}[proposition]{Lemma}
  \newtheorem{corollary}[proposition]{Corollary}
  \newtheorem{theorem}[proposition]{Theorem}
  \theoremstyle{definition}
  \newtheorem{definition}[proposition]{Definition}
  \newtheorem{notation}[proposition]{Notation}
  \newtheorem{example}[proposition]{Example}
  \newtheorem{examples}[proposition]{Examples}
  \theoremstyle{remark}
  \newtheorem{remark}[proposition]{Remark}
  \newcounter{c}
  \renewcommand{\[}{\setcounter{c}{1}$$}
  \newcommand{\etyk}[1]{\vspace{-7.4mm}$$\begin{equation}\Label{#1}
  \addtocounter{c}{1}}
  \renewcommand{\]}{\ifnum \value{c}=1 $$\else \end{equation}\fi}
\begin{document}
\title[Adjoint functor theorems]{Adjoint functor theorems for
  homotopically enriched categories}
\author{John Bourke}
\address{Department of Mathematics and Statistics, Masaryk University, Kotl\'a\v rsk\'a 2, Brno 61137, Czech Republic}
\email{bourkej@math.muni.cz}

\author{Stephen Lack}
\address{Department of Mathematical and Physical Sciences, Macquarie
  University NSW 2109, Australia}
\email{steve.lack@mq.edu.au}
\thanks{The first and third named authors acknowledge the support of the Grant Agency of the Czech Republic under the grant 19-00902S. The second-named author acknowledges with gratitude the hospitality of Masaryk University, and the support of an
  Australian Research Council Discovery Project DP190102432.}

\author{Luk\'{a}\v{s} Vok\v{r}\'{i}nek}
\address{Department of Mathematics and Statistics, Masaryk University,
  Kotl\'a\v rsk\'a 2, Brno 61137, Czech Republic}
\email{koren@math.muni.cz}

\date{\today}

\begin{abstract}
We prove an adjoint functor theorem in the setting of categories
enriched in a monoidal model category $\cv$ admitting certain limits.
When $\cv$ is equipped with the trivial model structure this
recaptures the enriched version of Freyd's adjoint functor theorem.
For non-trivial model structures, we obtain new adjoint functor theorems of a homotopical flavour --- in particular, when 
$\cv$ is the category of simplicial sets we obtain a homotopical adjoint functor theorem appropriate to the $\infty$-cosmoi of Riehl and Verity.  We also investigate accessibility in the enriched setting, in particular obtaining homotopical cocompleteness results for accessible $\infty$-cosmoi.
\end{abstract}

\maketitle 

\section{Introduction}

The general adjoint functor theorem of Freyd describes conditions under which a functor $U\colon \cb \to \ca$ has a left adjoint: namely,
\begin{itemize}
\item $U$ satisfies the solution set condition and 
\item $\cb$ is complete and $U\colon \cb \to \ca$ preserves limits.
\end{itemize}
The first applications of this result that one typically learns about
are the construction of left adjoints to forgetful functors between
algebraic categories, and the cocompleteness of algebraic categories.
In the present paper we shall describe a strict generalisation of Freyd's result, applicable to categories of a homotopical nature, and shall demonstrate similar applications to homotopical algebra.  

The starting point of our generalisation is to pass from ordinary $\Set$-enriched categories to categories enriched in a \emph{monoidal model category} $\cv$.   For instance, we could take $\cb$ to be
\begin{itemize}
\item the $2$-category of monoidal categories and strong monoidal functors (here $\cv = \Cat$);
\item the $\infty$-cosmos of quasicategories admitting a class of limits and morphisms preserving those limits (here $\cv= \SSet$);
\item the category of fibrant objects in a model category (here $\cv = \Set$ but with what we call the split model structure)
\end{itemize}
and $U\colon \cb \to \ca$ the appropriate forgetful functor to the base.  

Now in none of these cases is the $\cv$-category $\cb$ complete in the
sense of enriched category theory, but each does admit certain limits
captured by the model structure in which we are enriching --- in
particular, \emph{cofibrantly-weighted limits}.

If we keep the usual solution set condition, which holds in many
examples including those described above, the question turns to what
kind of left adjoint we can hope for?
Our answer here will fit into the framework of \emph{enriched
  weakness} developed by the second-named author and Rosick\'y in
\cite{LackRosicky2012}.
Given a class $\ce$ of morphisms in $\cv$, one says that $U\colon\cb
\to \ca$ admits an $\ce$-weak left adjoint if for each object $A\in \ca$
there exists an object $A^{'} \in \cb$ and a morphism
$\eta_A\colon A\to UA^{'}$ for which the induced map
\begin{equation}\label{eq:can}
\xymatrix @C3pc {
\cb(A^{'},B) \ar[r]^-U & \ca(UA^{'},UB) \ar[r]^-{\ca(\eta_A,UB)}  & \ca(A,UB)
}
\end{equation}
belongs to $\ce$ for all $B\in\cb$.
Here one recovers classical adjointness on taking $\ce$ to be the
isomorphisms.  In this paper, we take $\ce$ to be the class of morphisms in the
model category $\cv$ with the {\em dual strong deformation retract
  property} (see Section~\ref{sect:dsdr} below). 
We follow \cite{Dold-shrinkable} in calling such morphisms
{\em shrinkable},\footnote{And we thank Karol Szumi\l o for suggesting
  the name.} and we introduce the name {\em
  left shrink-adjoint} for $\ce$-weak left adjoint in this case where
$\ce$ consists of the shrinkable morphisms.

Our first main result, Theorem~\ref{thm:adjunction}, is our
generalisation of Freyd's adjoint functor theorem, and gives a
sufficient condition for a \cv-functor to have such a left
 shrink-adjoint. 
It {\em is} a generalisation, since when $\cv=\Set$ with the
trivial model structure, all colimits are cofibrantly-weighted, the
shrinkable morphisms are the isomorphisms, and left
shrink-adjoints are just ordinary left adjoints.  

Our second main result, Theorem~\ref{thm:colimits}, applies our adjoint functor theorem
to the construction of $\ce$-\emph{weak colimits}, in the sense
of \cite{LackRosicky2012}.  Namely, we prove that if $\cc$
is an \emph{accessible} $\cv$-category admitting certain limits, then
$\cc$ admits $\ce$-weak colimits. Once again, in our current
setting where $\ce$ consists of the shrinkable morphisms, we introduce
the term {\em shrink-colimit} for $\ce$-weak colimit. 

The remainder of the paper is devoted to giving applications of our two main
results in various settings.  In particular, we interpret them for $2$-categories and $\infty$-cosmoi,
where we obtain new results concerning adjoints of a homotopical flavour and homotopy colimits.  We also provide numerous concrete examples to which these results apply.

Let us now give a more detailed overview of the paper.
In Section~\ref{sect:setting} we introduce our basic setting, including
four running examples to which we return throughout the paper.  These four examples capture \emph{classical category theory},
\emph{weak category theory}, \emph{2-category theory} and \emph{$\infty$-cosmoi}.
 Furthermore, we assign to each $\cv$-category $\cc$ its ``homotopy
$\ER$-category'' $\hh_{*}\cc$: a certain category enriched in
equivalence relations. Since this is a simple kind of 2-category, it admits notions of equivalence, bi-initial objects, and so on --- accordingly, we can transport these notions to define equivalences and bi-initial objects in the $\cv$-category $\cc$.

The next five sections walk through generalisations of the various
steps in the proof of the adjoint functor theorem given in \cite{CWM}, which we summarise below.
\begin{itemize}
\item The technical core of the general adjoint functor theorem is the result that a complete category $\cc$ with a weakly initial set of objects has an initial object.  \emph{In Section~\ref{sect:initial} we generalise this result, establishing conditions under which $\cc$ has a bi-initial object.  }
\item In the classical setting the next step is to apply the preceding
  result to obtain an initial object in each comma category $A/U$.
  \emph{Accordingly Section~\ref{sect:comma} is devoted to the subtle world of enriched comma categories.}
\item The next step, trivial in the classical case, is the observation
  that if $\eta\colon A \to UA^{'}$ is initial in $A/U$ then
  \eqref{eq:can} is invertible.
  Another way of saying this is that \eqref{eq:can} is \emph{terminal}
  in the slice $\Set/\cb(A,UB)$.
  \emph{ In Section~\ref{sect:dsdr} we consider the class $\ce$ of \dsdr\ morphisms, which turn out to be bi-terminal objects in enriched slice categories.}
\item Putting all this together in Section~\ref{sect:I=1}, we prove our generalised adjoint functor theorem in the case where the unit $I\in\cv$ is terminal. In Section~\ref{sect:general} 
we adapt this to cover the general case by passing from $\cv$-enrichment to $\cv/I$-enrichment.
\end{itemize}
In Section~\ref{sect:cocompleteness} we  consider accessible
$\cv$-categories and prove that each accessible $\cv$-category with
powers and enough cofibrantly-weighted limits admits
shrink-colimits. 

In Section~\ref{sect:applications} we interpret our two main theorems in each of our four running examples.  In the first two examples we recover, and indeed extend, the classical theory.  Our focus, however, is primarily on $2$-categories and $\infty$-cosmoi, where we describe many examples to which our two main results apply.  In particular, we show that many natural examples of $\infty$-cosmoi are accessible, which allows us to conclude that they admit \emph{flexibly-weighted homotopy colimits} in the sense of \cite{Riehl2018On}.

\section{The setting}\label{sect:setting}

We start with a monoidal model category \cv, as in \cite{Hovey1999Model}
for example, in which the unit object $I$ is cofibrant.
We may sometimes allow ourselves to write as if the monoidal structure were strict. 

We shall now give four examples, to which we shall return throughout
the paper. Several further examples may be found in
Section~\ref{sect:dsdr} below.

\begin{example}\label{ex:trivial}
Any complete and cocomplete  symmetric monoidal closed category
$(\cv,\otimes,I)$ may be equipped with the \emph{trivial model
  structure}, in which the weak equivalences are the isomorphisms, and
all maps are cofibrations and fibrations.
The case of classical category theory corresponds to the cartesian closed category $(\Set,\times, 1)$.
\end{example}

\begin{example}\label{ex:split}
 For our second example, concerning weak category theory, we take our motivation from the case of $\Set$, in which case the relevant model structure has cofibrations the injections, fibrations the surjections, and all maps as weak equivalences.  

For general $\cv$ we refer to the corresponding model structure as the
\emph{split model structure}, which we now define.  By Proposition~2.6
of \cite{Rosicky2007Factorization}, in any category with binary
coproducts there is a weak factorisation whose left class consists of
the retracts of coproduct injections $ inj\colon X \to X + Y$, while the right
class consists of the split epimorphisms.
Here the canonical factorisation of an arrow $f\colon X \to Y$ is given by the cograph factorisation 
\begin{equation*}
\xymatrix{X \ar[r]^-{inj} & X+Y \ar[r]^-{\langle f,1 \rangle} & Y.}
\end{equation*}
As with any weak factorisation system (on a category with, say, finite
limits and colimits), we can extend it to a model structure whose
cofibrations and fibrations are the two given classes, and for which
all maps are weak equivalences.
Doing this in the present case
produces the split model structure.
It is a
routine exercise to show that if $(\cv,\otimes,I)$ is a complete and
cocomplete symmetric monoidal closed category, then the split model structure on $\cv$ is monoidal.   
\end{example}

\begin{example}\label{ex:Cat}
  The cartesian closed category $(\Cat,\times,1)$ becomes a monoidal
  model category when it is equipped with the {\em canonical model
  structure}: here the weak equivalences are the equivalences of
  categories, the cofibrations are the functors which are injective on
  objects, and the fibrations are the isofibrations. All objects are
  cofibrant and fibrant. 
  The trivial fibrations are the equivalences which are surjective on objects; these are
  often known as surjective equivalences or retract equivalences. 
\end{example}

\begin{example}\label{ex:Joyal}
  Our final main example is the cartesian closed category
  $(\SSet,\times,1)$ of simplicial sets equipped with the Joyal model
  structure.
  (Using the Kan model structure would give another example, but it
  turns out that this leads to a strictly weaker adjoint functor
  theorem.)
\end{example}

We follow the standard notation in enriched category theory of writing $\ca_0$ for the underlying ordinary category of a $\cv$-category $\ca$. This has the same objects as $\ca$, and the hom-set $\ca_0(A,B)$ is given by the set of morphisms $I\to \ca(A,B)$ in $\cv$. Similarly, we write $F_0\colon\ca_0\to\cb_0$ for the underlying ordinary functor of a $\cv$-functor $F\colon\ca\to\cb$. 

We shall begin by showing how to associate to any \cv-category a
2-category whose hom-categories are equivalence relations (that
is, groupoids which are also preorders).

\subsection{Intervals}

An {\em interval} in \cv will be an object $J$ equipped with a
factorisation
\[ \xymatrix{
    I+I \ar[r]^-{(d~c)} & J \ar[r]^e & I } \]
of the codiagonal for which $(d~c)$ is a cofibration and $e$ a weak
equivalence. We shall then say that ``$(J,d,c,e)$ is an
interval''. Observe that if $(d~c)$ is a cofibration, then since $I$
is cofibrant, $d$ and $c$ are also cofibrations, and the following are
equivalent:
\begin{itemize}
\item $e$ is a weak equivalence
\item $d$ is a trivial cofibration
  \item $c$ is a trivial cofibration. 
\end{itemize}

We may always obtain an interval by factorising the codiagonal $I+I\to
I$ as a cofibration followed by a trivial fibration; an interval of
this type will be called a {\em standard interval}.
$(J,d,c,e)$ is an interval if and only if $(J,c,d,e)$ is one.
There are also two constructions on intervals that will be needed.

\begin{proposition}\label{prop:interval-for-transitivity}
  If $(J_1,d_1,c_1,e_1)$ and $(J_2,d_2,c_2,e_2)$ are intervals, so is $(J, d'_2d_1, c'_1c_2,e)$ where $J$ is constructed as the
  pushout
  \[ \xymatrix @C1pc @R1pc {
      & J_1 \ar[dr]^{d'_2} \ar@/^1pc/[drr]^{e_1} \\
      I \ar[ur]^{c_1} \ar[dr]_{d_2} && J \ar[r]^e & I  . \\
      & J_2 \ar[ur]_{c'_1} \ar@/_1pc/[urr]_{e_2} } \]
\end{proposition}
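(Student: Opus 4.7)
My plan is to verify each of the three conditions required of the candidate interval: that the codiagonal $\nabla\colon I+I\to I$ factors as $e\circ(d'_2d_1,\,c'_1c_2)$; that $(d'_2d_1,\,c'_1c_2)\colon I+I\to J$ is a cofibration; and that $e\colon J\to I$ is a weak equivalence. To begin, $e$ is induced by the universal property of the pushout from the pair $(e_1,e_2)$, which is compatible because $e_1c_1=1_I=e_2d_2$; it satisfies $ed'_2=e_1$ and $ec'_1=e_2$. Hence $e\circ(d'_2d_1)=e_1d_1=1_I$ and $e\circ(c'_1c_2)=e_2c_2=1_I$, so the composite is the codiagonal. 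For the weak equivalence statement, I would use two-out-of-three: by the equivalences recalled just before the proposition, $c_1\colon I\to J_1$ is a trivial cofibration because $e_1$ is a weak equivalence, so its pushout $c'_1\colon J_2\to J$ along $d_2$ is again a trivial cofibration; since $e\circ c'_1=e_2$ is a weak equivalence, so is $e$.

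The cofibration condition is the main obstacle. My approach is to factor $(d'_2d_1,\,c'_1c_2)\colon I+I\to J$ as a composite $I+I\xrightarrow{d_1+1_I}J_1+I\xrightarrow{\beta}J$ of two cofibrations. The first map is a pushout of the cofibration $d_1\colon I\to J_1$ along the first coproduct inclusion $I\to I+I$, hence a cofibration. For the second map, $\beta=[d'_2,\,c'_1c_2]$, I would realise $J$ (together with $\beta$ and $c'_1$) as the pushout of the cofibration $(d_2,c_2)\colon I+I\to J_2$ along $c_1+1_I\colon I+I\to J_1+I$. A direct universal-property check confirms this: a map out of the candidate pushout to an object $X$ consists of $f\colon J_1+I\to X$ and $g\colon J_2\to X$ with $f\circ(c_1+1_I)=g\circ(d_2,c_2)$, which unpacks to the two equations $f|_{J_1}\circ c_1=g\circ d_2$ and $f|_I=g\circ c_2$; the first is exactly the pushout datum defining $J$, while the second merely determines $f|_I$ from $g$. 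Thus $\beta$ is a pushout of the cofibration $(d_2,c_2)$ and so is itself a cofibration, and the composite produces $(d'_2d_1,\,c'_1c_2)$.

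The delicate point is spotting this auxiliary pushout, in which the ``extra'' $I$-summand in $J_1+I$ is absorbed into the image of $c_2\in J_2$. Once the diagram is identified, the cofibration claim reduces to the standard closure of cofibrations under coproducts, composition, and pushouts, and combined with the two earlier observations this completes the verification that $(J,d'_2d_1,c'_1c_2,e)$ is an interval.
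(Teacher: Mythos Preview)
Your argument is correct. The verification of the factorisation through the codiagonal and the two-out-of-three argument for $e$ are routine, and your auxiliary pushout realising $\beta=[d'_2,c'_1c_2]\colon J_1+I\to J$ as a pushout of the cofibration $(d_2,c_2)$ along $c_1+1_I$ is checked correctly.

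The paper takes a somewhat different route to the cofibration claim. Rather than the asymmetric two-step factorisation $I+I\xrightarrow{d_1+1}J_1+I\xrightarrow{\beta}J$, it exhibits $J$ as the pushout of the cofibration $(d_1,c_1)+(d_2,c_2)\colon I+I+I+I\to J_1+J_2$ along the fold map $1+\nabla+1\colon I+I+I+I\to I+I+I$, so that the induced $I+I+I\to J$ is a cofibration; precomposing with the coproduct injection $i_{1,3}\colon I+I\to I+I+I$ (a cofibration since $I$ is cofibrant) gives the result. The paper also packages the weak-equivalence step differently: it observes directly that $d'_2$ and $c'_1$, being pushouts of trivial cofibrations, are trivial cofibrations, hence so are $d'_2d_1$ and $c'_1c_2$, and then invokes the equivalence of conditions stated just before the proposition. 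Your approach trades the paper's larger symmetric pushout for a smaller, hands-on one; the paper's has the virtue of treating $J_1$ and $J_2$ symmetrically in a single diagram, while yours keeps the objects involved minimal. Both reduce to the same closure properties of cofibrations.
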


\proof
Since $d'_2$ and $c'_1$ are pushouts of trivial cofibrations they are
trivial cofibrations. Thus both $d'_2d_1$ and $c'_1c_2$ are trivial
cofibrations. On the other hand we can construct $J$ as the pushout
\[ \xymatrix{
    & I+I+I+I \ar[rr]^-{(d_1~c_1)+(d_2~c_2)} \ar[d]_{1+\nabla+1} && J_1+J_2
    \ar[d]^{d'_2+c'_1} \\
    I+I \ar[r]_-{i_{1,3}} & I+I+I \ar[rr] && J } \]
and so the lower horizontal is also a cofibration.
\endproof

\begin{proposition}
  \label{prop:interval-for-tensor}
  If $(J_1,d_1,c_1)$ and $(J_2,d_2,c_2)$ are intervals then $J_1\ox
  J_2$ becomes an interval when equipped with the maps
  \[ \xymatrix @C2pc @R1pc {
      &I \ar[r]^-{\cong} & I\ox I \ar[r]^-{d_1\ox d_2} & J_1\ox J_2  \\
      & I \ar[r]^-{\cong} & I\ox I \ar[r]^-{c_1\ox c_2} & J_1\ox J_2  \\
     & J_1\ox J_2 \ar[r]^-{e_1\ox e_2} & I\ox I \ar[r]^-{\cong} & I. 
} \]
\end{proposition}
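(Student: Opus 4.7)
The plan is to verify the three defining properties of an interval for the structure on $J_1\ox J_2$: namely, (a) the composite of $(d~c)$ with $e$ equals the codiagonal on $I+I$; (b) $(d~c)\colon I+I\to J_1\ox J_2$ is a cofibration; and (c) $e\colon J_1\ox J_2\to I$ is a weak equivalence. Two preliminary observations are useful: each $J_i$ is cofibrant, since $\emptyset\to I+I\to J_i$ is a composite of cofibrations (using that $I$ is cofibrant and coproducts of cofibrations are cofibrations); and $\ox$ distributes over $+$ in each variable since the monoidal structure is closed.

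Condition (a) is immediate from functoriality: $(e_1\ox e_2)\circ(d_1\ox d_2) = (e_1 d_1)\ox(e_2 d_2) = 1_I\ox 1_I$, and likewise for $c$, so modulo the isomorphisms $I\ox I\cong I$ we do factorise the codiagonal.

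For (c), the pushout-product axiom applied to $\emptyset\to J_1$ and $\emptyset\to I$ (both cofibrations since $J_1$ and $I$ are cofibrant) shows that $J_1\ox-$ and $-\ox I$ are left Quillen functors. By Ken Brown's lemma they preserve weak equivalences between cofibrant objects, so
\[
e_1\ox e_2 \;=\; (e_1\ox I)\circ(J_1\ox e_2)\colon J_1\ox J_2\to J_1\ox I\to I\ox I
\]
is a weak equivalence; composing with the isomorphism $I\ox I\cong I$ then yields $e$ as a weak equivalence.

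The main step is (b). I would factor $(d~c)$ as
\[
I+I \xrightarrow{d_2+c_2} J_2+J_2 \;\cong\; (I+I)\ox J_2 \xrightarrow{(d_1~c_1)\ox J_2} J_1\ox J_2,
\]
where the middle isomorphism is $(I+I)\ox J_2\cong (I\ox J_2)+(I\ox J_2)\cong J_2+J_2$. A routine check using the identities $(d_1\ox J_2)\circ d_2 = d_1\ox d_2$ and $(c_1\ox J_2)\circ c_2 = c_1\ox c_2$ confirms that this composite equals $(d_1\ox d_2~c_1\ox c_2)$. The first map is a coproduct of two cofibrations and hence itself a cofibration; the second is the pushout-product of the cofibration $(d_1~c_1)$ with the cofibration $\emptyset\to J_2$, and so is a cofibration by the pushout-product axiom. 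Consequently $(d~c)$ is a cofibration. The only point requiring genuine care is aligning the distributivity isomorphism so that the composite really gives $(d_1\ox d_2~c_1\ox c_2)$; everything else is formal.
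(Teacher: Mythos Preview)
The paper states this proposition without proof, so there is nothing to compare against directly; your argument is correct in all three parts. One minor streamlining: for part~(c) you can avoid Ken Brown's lemma by instead using the equivalence recorded just before the definition of interval. Once~(b) is in hand, it suffices to check that $d=d_1\ox d_2$ is a trivial cofibration, and this follows by writing it as the composite $(d_1\ox J_2)\circ(I\ox d_2)$ and applying the pushout-product axiom to the trivial cofibrations $d_1,d_2$ against the cofibrations $\emptyset\to J_2$ and $\emptyset\to I$. Your approach via $e_1\ox e_2$ is equally valid, just slightly heavier machinery.
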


\subsection{\ER-categories}

Let \ER be the full subcategory of \Cat consisting of the {\em
  equivalence relations}: that is, the groupoids which are also
preorders.
This is closed in $\Cat$ under both products and internal homs; 
it follows that \ER becomes a cartesian closed category in its own right, and that we can consider
categories enriched over \ER.

An \ER-category is just a 2-category for which each hom-category is an
equivalence relation. Equivalently, it is an ordinary category in
which each hom-set is equipped with an equivalence relation, and this
relation is respected by composition on either side (whiskering, if
you will). 

Since an \ER-category is a special sort of 2-category, we can consider
standard 2-categorical notions. Thus morphisms $f,g\colon A\to B$ are
isomorphic in the 2-category just when they are related under the given
equivalence relation. We may as well therefore use $\cong$ to denote
this relation. An {\em
  equivalence} in an \ER-category consists of morphisms $f\colon A\to
B$ and $g\colon B\to A$ for which both $gf\cong 1_A$ and $fg\cong 1_B$.

\begin{definition}
An object $T$ of an \ER-category $\cc$ is {\em bi-terminal} if for
each object $C$, the induced map $\cc(C,T)\to 1$ is an equivalence.
\end{definition}
In elementary terms, this means that there exists a morphism $C \to T$ and it is unique up to isomorphism.  Dually there are {\em bi-initial} objects.

\subsection{The homotopy \ER-category of a \cv-category}

Any monoidal category has a monoidal functor to \Set given by homming
out of the unit object; in our case, this has the form
 $\cv(I,-)\colon \cv\to \Set$. The monoidal structure is defined by
\begin{equation}\label{eq:monoidal}
 \xymatrix @R0pc {
    \cv(I,X)\x\cv(I,Y) \ar[r] & \cv(I,X\ox Y)   \\
    (x,y) \ar@{|->}[r] & x\ox y
    } 
  \end{equation}
with unit defined by the map $1\to \cv(I,I)$ picking out the identity.
    
 Given $X\in\cv$ and $x,y\colon I\to X$, write $x\sim y$ if there is an
interval $(J,d,c,e)$ and a morphism $h\colon J\to X$ with $hd=x$ and
$hc=y$. This clearly defines a symmetric reflexive relation on
$\cv(I,X)$; and it is transitive by Proposition~\ref{prop:interval-for-transitivity}.
Given $f\colon X \to Y$, the function $\cv(I,f)$ is a morphism of equivalence relations, since if $x=hd\sim hc=y$ then also $fx=fhd\sim
fhc=fy$ for any $f$. Thus we have a functor $\cv\to \ER$.  
Thanks to Proposition~\ref{prop:interval-for-tensor}, the functions ~\eqref{eq:monoidal}
lift to morphisms of equivalence relations.  Since the unit map $1\to \cv(I,I)$ is trivially
a morphism of equivalence relations, the monoidal functor
$\cv(I,-)\colon \cv\to\Set$ lifts to a monoidal functor
$\hh\colon \cv\to\ER$. 

This $\hh$ induces a 2-functor $\hh_*\colon \cv\cat\to\ER\cat$.
Explicitly, for any \cv-category \cc, the induced \ER-category
$\hh_*(\cc)$ is given by the underlying ordinary category of \cc,
equipped with the equivalence relation $\cong$ on each hom-set
$\cc_0(A,B)$, where $f\cong g$ just when, seen as maps $I \to
\cc(A,B)$, they are $\sim$-related; in other words, if there is a factorisation
\[ \xymatrix{
    I+I \ar[r]^-{(f~g)} \ar[d]_-{(d~c)} & \cc(A,B) \\ J \ar@{->}[ur]_{h}
  } \]
for some interval $(J,d,c,e)$.   We call this relation $\cong$
the {\em\cv-homotopy relation}. 

\begin{remark}
For $f$ and $g$ as above, let us refer to a given extension $h$ as
above as a {\em $(\cv,J)$-homotopy}, and denote it as $h\colon f \cong_{J} g$.  We say that such an $h\colon J \to \cc(A,B)$ is \emph{trivial} if it factorises through $e\colon J \to I$, in which case of course $f=g$.

Given $a\colon X \to A$ and $b\colon B \to Y$ we naturally obtain
$(\cv,J)$-homotopies $h \circ a\colon f \circ a \cong_{J} g \circ a$ and $b \circ h\colon b \circ f \cong_{J} b \circ g$.  It is only when considering transitivity of the relation $\cong$ that we need to change the interval $J$.
\end{remark}

\begin{remark}
  A priori, we now have two ``homotopy relations'' for morphisms
  $x,y\colon I\to X$ in \cv: on the one hand $x\sim y$ and on the
  other $x\cong y$. But these are clearly the same.
  
More generally, morphisms $f,g\colon X\to Y$ in \cv are $\cv$-homotopic if any (and thus all) of the following
  diagrams has a filler:
  \[ \xymatrix{
      X+X \ar[r]^-{(f~g)} \ar[d]_{(d\cdot X~c\cdot X)} & Y & I+I \ar[r]^{(f~g)}
      \ar[d]_{(d~c)} & [X,Y] & X \ar@{.>}[r] \ar[dr]_{\binom{f}{g}}& Y^J \ar[d]^{\binom{Y^d}{Y^c}} \\
      J\cdot X \ar@{.>}[ur] && J \ar@{.>}[ur] &&& Y\x Y. } \]
 In the case that $X$ is cofibrant then $e\cdot X\colon J \cdot X \to X$ is a weak equivalence so that $J\cdot X$ is a \emph{cylinder object} for $X$; in this case the left diagram above shows $\cv$-homotopy implies \emph{left homotopy} in the model categorical sense of \cite{Quillen1967Homotopical}.  Similarly if $Y$ is fibrant the third diagram shows that $\cv$-homotopy implies \emph{right homotopy}, but in general there need be no relation between $\cv$-homotopy and left or right homotopy.
\end{remark}
Since a \cv-functor $F\colon \ca\to\cb$ induces an \ER-functor
$\hh_*(F)\colon \hh_*(\ca)\to \hh_*(\cb)$, if $f\cong g$ in \ca then
$Ff\cong Fg$ in \cb. This is one useful feature of $\cv$-homotopy not true of the usual left and right homotopy relations in the model category $\cv$.

\begin{definition}
An object of a \cv-category \ca is
{\em bi-terminal} or {\em bi-initial} if it is so in $\hh_*(\ca)$. 
\end{definition}
In elementary terms, $T$ is
bi-terminal in \ca if for each $A\in\ca$ there is a morphism $A\to T$,
which is unique up to $\cv$-homotopy. 

If \ca has an actual terminal
object $1$, then $T$ is bi-terminal if and only if the unique map
$! \colon\black T\to 1$ is an equivalence. Explicitly, this means that there is a
morphism $t\colon 1\to T$ with $t\circ !\cong 1_T$. 
The dual remarks apply to bi-initial objects.

\begin{proposition}\label{prop:adj-pres-bi-term}
  Right adjoint \cv-functors preserve bi-terminal objects. 
\end{proposition}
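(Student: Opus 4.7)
The plan is to transport bi-terminality across the hom-object isomorphism provided by the adjunction. Let $U\colon \cb \to \ca$ be a right $\cv$-adjoint, with $\cv$-left adjoint $F \dashv U$, and suppose $T \in \cb$ is bi-terminal. To show that $UT$ is bi-terminal in $\ca$ I need to verify that, for every $A \in \ca$, the induced map $\hh_*(\ca)(A, UT) \to 1$ is an equivalence in $\ER$.

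The $\cv$-adjunction supplies a natural isomorphism $\cb(FA, B) \cong \ca(A, UB)$ in $\cv$. Applying the functor $\hh\colon \cv \to \ER$ (which exists by the construction in the preceding subsection) produces an isomorphism $\hh_*(\cb)(FA, B) \cong \hh_*(\ca)(A, UB)$ in $\ER$ for each $A$ and $B$. Specialising $B = T$, the map in question factors as
\[
\hh_*(\ca)(A, UT) \xrightarrow{\cong} \hh_*(\cb)(FA, T) \longrightarrow 1,
\]
an iso in $\ER$ followed by an equivalence (by bi-terminality of $T$ applied to $FA$), and thus an equivalence. Since $A$ was arbitrary, $UT$ is bi-terminal.

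I do not expect any serious obstacle: the argument is purely formal, relying only on the fact that $\hh$ is a functor, so that a $\cv$-enriched hom-isomorphism automatically yields an isomorphism of the corresponding hom-objects in $\ER$. Equivalently, one may observe that $\hh_*$ is a $2$-functor $\cv\cat \to \ER\cat$ and hence preserves adjunctions, after which the conclusion reduces to the standard fact that right adjoints in a $2$-category preserve bi-terminal objects — realised concretely by the two-line chase above.
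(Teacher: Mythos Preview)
Your proof is correct and follows essentially the same approach as the paper: both use the adjunction hom-isomorphism $\cb(FA,T)\cong\ca(A,UT)$ to reduce bi-terminality of $UT$ at $A$ to bi-terminality of $T$ at $FA$. The paper phrases this elementwise (existence of a map $FA\to T$ yields one $A\to UT$; two maps $A\to UT$ transpose to $\cv$-homotopic maps $FA\to T$, and applying $U$ and whiskering by the unit gives a $\cv$-homotopy back in $\ca$), whereas you package the same content by applying the functor $\hh$ to the hom-isomorphism, but the underlying argument is identical.
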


\proof
Let $U\colon\cb\to\ca$ be a \cv-functor with $F\dashv U$ a left
adjoint, and let $T\in\cb$ be bi-terminal.  For any $A\in\ca$, there is a morphism $FA\to T$ in \cb and so a
morphism $A\to UT$ in \ca.  If $f,g\colon A\to UT$ are two such morphisms in \ca, then their
adjoint transposes $f',g'\colon FA\to T$ are $\cv$-homotopic in \cb, hence
$f=Uf'\circ\eta_A\cong Ug'\circ \eta_A = g$ in \ca.
\endproof

\section{Shrinkable morphisms and shrink-adjoints}\label{sect:dsdr}

\begin{definition}
A morphism $f \colon A \to B$ in a $\cv$-category $\cc$ is said to be
 {\em \dsdr} if there exists a morphism $s\colon B\to A$ with
$f\circ s=1_B$, an interval $J$, and a $(\cv,J)$-homotopy $h\colon sf \cong_{J} 1_{A}$ such that $fh\colon f=fsf \cong_{J} f$ is trivial.  
\end{definition}

In elementary terms, such an $h$ is a morphism as below
\begin{equation}\label{eq:dsdr}
\xymatrix{
I+I \ar[d]_{(d~c)} \ar[rr]^{(1_A~s\circ f)} &&  \cc(A,A) 
\ar[d]^{\cc(A,f)} \\
J \ar[r]_{e} \ar[urr]_{h} & I \ar[r]_-f & \cc(A,B) } 
\end{equation}
rendering the diagram commutative.

Let us record a few easy properties.

\begin{proposition}
Shrinkable morphisms are closed under composition, contain the isomorphisms, and are preserved by any $\cv$-functor.
\end{proposition}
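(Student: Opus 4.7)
The plan is to verify each of the three closure properties directly from the definition of shrinkable, with most of the work going into composition.

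The two easy parts come first. For an isomorphism $f\colon A\to B$ with inverse $s$, choose any interval $(J,d,c,e)$ (for instance a standard one) and take $h$ to be the constant homotopy $1_A\circ e\colon J\to I\to\cc(A,A)$. Then $hd=1_A=sf$ and $hc=1_A=sf$, while $fh=f\circ e$ visibly factors through $e$, so $fh$ is trivial. For $\cv$-functor preservation, given a witness $(s,J,h)$ for shrinkability of $f\colon A\to B$ in $\cc$ and a $\cv$-functor $F\colon\cc\to\cd$, the composite $F_{A,A}\circ h\colon J\to\cd(FA,FA)$ serves as a witness for $Ff$ with section $Fs$: functoriality of the hom-maps takes care of the endpoint conditions and sends the factorisation of $fh$ through $e$ to a factorisation of $(Ff)\circ (F_{A,A}\circ h)=F((fh))$ through $e$.

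For composition, suppose $f\colon A\to B$ is shrinkable with data $(s,J_1,h_1)$ and $g\colon B\to C$ with data $(t,J_2,h_2)$. Then $st\colon C\to A$ is a section of $gf$, since $gf\circ st = g\circ(fs)\circ t = g\circ t = 1_C$. To build the required homotopy I splice $h_1\colon 1_A\cong_{J_1} sf$ with the whiskered homotopy $s\circ h_2\circ f\colon sf\cong_{J_2} stgf$ (obtained by pre-composing $h_2$ with $f$ and post-composing with $s$, so the endpoints become $s\cdot 1_B\cdot f=sf$ and $s\cdot tg\cdot f=stgf$). Since these two homotopies agree at the common endpoint $sf$, Proposition~\ref{prop:interval-for-transitivity} produces an interval $(J,d'_2 d_1,c'_1 c_2,e)$ and an induced homotopy $m\colon J\to\cc(A,A)$ whose endpoints are $1_A$ and $stgf$.

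The main obstacle is verifying that $gf\circ m$ is trivial. Triviality behaves well under whiskering: since $fh_1$ is trivial, so is $g\circ(fh_1)=gf\circ h_1$, and since $gh_2$ is trivial, so is its precomposition with $f$, namely $gf\circ(sh_2 f) = g\circ h_2\circ f$. Now a trivial homotopy in $\cc(A,C)$ is determined by the constant $I\to\cc(A,C)$ through which it factors, and this constant is read off at either endpoint: for $gf\circ h_1$ both endpoints evaluate to $gf$, and for $gf\circ(sh_2 f)$ both endpoints evaluate to $gf$ as well (using $fs=1_B$ and $gt=1_C$). Because the two trivialising factorisations agree through the same constant $gf\colon I\to\cc(A,C)$, the universal property of the pushout defining $J$ shows that $gf\circ m$ factors as $gf\circ e\colon J\to I\to\cc(A,C)$, hence is trivial. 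Bookkeeping of which interval arises at the splicing step is the only subtle point.
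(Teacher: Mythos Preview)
Your proof is correct and follows essentially the same approach as the paper: the paper's proof simply records that closure under composition follows from Proposition~\ref{prop:interval-for-transitivity} (intervals can be composed) and declares the remaining facts routine, while you spell out the details of the splicing and the triviality check explicitly. Your verification that the two trivialising factorisations through $e_1$ and $e_2$ glue to a factorisation through the pushout $e$ is exactly the content the paper leaves implicit.
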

\begin{proof}
  Closure under composition follows from the fact --- see
  Proposition~\ref{prop:interval-for-transitivity} --- that intervals
  can be composed.  Verification of the remaining facts is routine.
\end{proof}

\begin{proposition}\label{prop:tf-implies-e}
  Let $\cc$  be a model \cv-category. 
 Any trivial fibration $f\colon A\to B$ in  $\cc$ with cofibrant domain
 and codomain is \dsdr.
\end{proposition}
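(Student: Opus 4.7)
The plan is to obtain a section $s$ of $f$ first, and then to produce the required homotopy $h$ as a single diagonal filler in $\cv$. Since $B$ is cofibrant and $f$ is a trivial fibration in $\cc$, lifting $1_B$ through $f$ along $\emptyset\to B$ produces a section $s\colon B\to A$ with $fs=1_B$.

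Next, fix any interval $(J,d,c,e)$ in $\cv$. The square in (3.1) whose filler is sought commutes, as both composites equal $(f~f)\colon I+I \to \cc(A,B)$, so shrinkability of $f$ reduces to finding a diagonal filler $h$ of
$$\xymatrix{ I+I \ar[d]_{(d~c)} \ar[rr]^{(1_A~sf)} && \cc(A,A) \ar[d]^{\cc(A,f)} \\ J \ar[rr]_{f\circ e} \ar@{.>}[urr]^-{h} && \cc(A,B). }$$
The left vertical is a cofibration in $\cv$ by the very definition of an interval. For the right vertical I would invoke the $\cv$-model category axiom in its Leibniz hom form: from the cofibration $\emptyset\to A$ in $\cc$ (using that $A$ is cofibrant) and the trivial fibration $f$ one obtains that
$$\cc(A,A) \to \cc(A,B) \times_{\cc(\emptyset,B)} \cc(\emptyset,A)$$
is a trivial fibration in $\cv$; since $\cc(\emptyset,-)$ is terminal in $\cv$, this collapses to $\cc(A,f)$ being a trivial fibration. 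A diagonal filler $h$ therefore exists, and the commutativity of the lower triangle reads $\cc(A,f)\circ h = f\circ e$, which is exactly the triviality condition on $fh$ demanded by the definition of shrinkable.

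The only non-formal ingredient is this appeal to the enriched model structure; both cofibrancy hypotheses enter in essential ways (cofibrancy of $B$ for the section $s$, and of $A$ for the Leibniz argument). All other steps --- the commutativity of the square and the unpacking of the filler $h$ as shrinkability data --- are routine bookkeeping.
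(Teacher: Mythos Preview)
Your proof is correct and follows essentially the same approach as the paper's. The only difference is cosmetic: the paper invokes the pushout-product form of the $\cv$-model axiom to show that $A+A\to J\cdot A$ is a cofibration in $\cc$ and then lifts against the trivial fibration $f$ there, whereas you use the adjoint pullback-hom form to show that $\cc(A,f)$ is a trivial fibration in $\cv$ and lift $(d~c)\colon I+I\to J$ against it; the two lifting problems are transposes of one another under the tensor--hom adjunction.
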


\proof
Since $B$ is cofibrant, $f$ has a section $s$.  Since $A$ is cofibrant, the induced $\cc(A,f) \colon \cc(A,A) \to \cc(A,B)$ is a trivial fibration and so we obtain a diagonal filler $h \colon J\to  \cc(A,A)$ as in \eqref{eq:dsdr}.
\endproof

\begin{proposition}\label{prop:e-implies-we}
  Let $\cc$  be a model \cv-category.
  If $f\colon A\to B$ is \dsdr\ in  $\cc$  and either $d\cdot A\colon A\to J\cdot A$
  or $d\pitchfork A\colon J\pitchfork A\to A$ is a weak equivalence then so is $f$. In
  particular, any shrinkable morphism whose domain is either cofibrant
  or fibrant is a weak equivalence.
\end{proposition}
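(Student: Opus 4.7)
The plan is to transport the $(\cv,J)$-homotopy $h\colon sf\cong_J 1_A$ across the tensor-hom (resp.\ hom-cotensor) adjunction, and then run 2-out-of-3 and 2-out-of-6 arguments for weak equivalences in the model \cv-category $\cc$.

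Suppose first that $d\cdot A\colon A\to J\cdot A$ is a weak equivalence. Let $\tilde h\colon J\cdot A\to A$ be the transpose of $h\colon J\to \cc(A,A)$ under the adjunction $\cv(J,\cc(A,A))\cong \cc_0(J\cdot A,A)$. The boundary conditions $hd=1_A$ and $hc=sf$ translate to $\tilde h\circ(d\cdot A)=1_A$ and $\tilde h\circ(c\cdot A)=sf$. Applying 2-out-of-3 to $\tilde h\circ(d\cdot A)=1_A$ shows $\tilde h$ is a weak equivalence; applying it to $(e\cdot A)(d\cdot A)=1_A$ shows $e\cdot A$ is a weak equivalence; and then applying it to $(e\cdot A)(c\cdot A)=1_A$ shows that $c\cdot A$ is a weak equivalence as well. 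Consequently $sf=\tilde h\circ(c\cdot A)$ is a composite of two weak equivalences, and hence a weak equivalence. Combining this with $fs=1_B$, the 2-out-of-6 property applied to the composable triple $f,s,f$ (noting $sf$ and $fs$ are weak equivalences) forces $f$ itself to be a weak equivalence.

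The case where $d\pitchfork A\colon J\pitchfork A\to A$ is a weak equivalence is entirely dual: one transposes $h$ across the hom-cotensor adjunction to obtain $\bar h\colon A\to J\pitchfork A$ satisfying $(d\pitchfork A)\bar h=1_A$ and $(c\pitchfork A)\bar h=sf$, and runs the same chain of 2-out-of-3 arguments on $d\pitchfork A$, $e\pitchfork A$, $c\pitchfork A$ before concluding via 2-out-of-6 as above.

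For the final ``in particular'' assertion, one applies the pushout-product axiom of the monoidal model structure on $\cv$. If $A$ is cofibrant, then $0\to A$ is a cofibration, and its pushout-product with the trivial cofibration $d\colon I\to J$ is $d\cdot A\colon A\to J\cdot A$, which is therefore a trivial cofibration and in particular a weak equivalence, reducing to the first case. Dually, if $A$ is fibrant, the pullback-cotensor of $d$ with $A\to 1$ is $d\pitchfork A\colon J\pitchfork A\to A$, a trivial fibration, reducing to the second case. The only thing requiring a touch of care is the bookkeeping of the tensor-hom transpose of $h$ against the unit-indexed maps $1_A, sf\colon I\to \cc(A,A)$; this is a routine naturality calculation and is the mildest possible obstacle.
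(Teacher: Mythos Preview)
Your proof is correct and follows essentially the same approach as the paper: transpose the homotopy to a map $J\cdot A\to A$, use 2-out-of-3 to show $e\cdot A$, $c\cdot A$, $\tilde h$, and hence $sf$ are weak equivalences, then conclude via $fs=1$; the paper leaves the final step implicit where you explicitly invoke 2-out-of-6, and similarly the paper's ``$d\cdot A$ is a trivial cofibration since $d$ is and $A$ is cofibrant'' is exactly your pushout-product argument spelled out.
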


\proof
Suppose first that $d\cdot A$ is a weak equivalence.
We have commutative diagrams
\[ \xymatrix @C3pc {
    A \ar@<1ex>[r]^{d\cdot A} \ar@<-1ex>[r]_{c\cdot A} \ar@/^2pc/[rr]^{1}
    \ar@/_2pc/[rr]_{sf} & J\cdot A \ar[r]^h & A  &
     A \ar@<1ex>[r]^{d\cdot A} \ar@<-1ex>[r]_{c\cdot A} \ar@/^2pc/[rr]^{1}
    \ar@/_2pc/[rr]_{1} & J\cdot A \ar[r]^{e\cdot A} & A  
} \]
and since $d\cdot A$ is a weak equivalence it follows successively that
$e\cdot A$, $c\cdot A$, $h$, and finally $sf$ are so. But since $fs=1$ it
follows that $f$ (and $s$) are also weak equivalences.

Since $d$ is a trivial cofibration, if $A$ is cofibrant then $d\cdot
A$ will be a trivial cofibration, and in particular a weak
equivalence. 

The cases of $d\pitchfork A$ a weak equivalence, and of $A$ fibrant, are similar. \endproof

Using the preceding two propositions, we have the following result.
\begin{corollary}\label{cor:all-cofibrant}
If all objects of $\cc$ are cofibrant, then each trivial fibration is
\dsdr\ and each \dsdr\ morphism is a weak equivalence with a section.
\end{corollary}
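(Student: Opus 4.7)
The corollary is a direct consequence of the two preceding propositions, so my plan is simply to indicate which piece follows from which.

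First, for the claim that every trivial fibration is shrinkable: this is exactly Proposition~\ref{prop:tf-implies-e}, whose hypothesis requires the domain and codomain to be cofibrant. Under the blanket assumption that all objects of $\cc$ are cofibrant, this hypothesis is automatic, so the conclusion applies to every trivial fibration in $\cc$.

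For the second claim, I would split it into two observations. That a shrinkable morphism $f\colon A \to B$ admits a section is immediate from the definition of shrinkability, which explicitly demands a map $s\colon B \to A$ with $f \circ s = 1_B$. That $f$ is moreover a weak equivalence then follows from Proposition~\ref{prop:e-implies-we}: since $A$ is cofibrant (by hypothesis on $\cc$), the morphism $d \cdot A\colon A \to J \cdot A$ is a trivial cofibration (as $d$ is a trivial cofibration and the cofibrations are closed under tensoring with cofibrant objects in a monoidal model category), hence a weak equivalence, and the proposition then ensures that $f$ is a weak equivalence.

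There is essentially no obstacle: the work has been done in Propositions~\ref{prop:tf-implies-e} and~\ref{prop:e-implies-we}, and the corollary just records what their hypotheses simplify to under the global cofibrancy assumption. The only point worth double-checking is that one actually has both conclusions of the second sentence under the same hypothesis, which is clear since cofibrancy of the domain is what drives both the existence of the section (in the derivation of shrinkability) and the weak-equivalence conclusion of Proposition~\ref{prop:e-implies-we}.
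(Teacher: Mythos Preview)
Your proposal is correct and matches the paper's approach: the paper gives no explicit proof beyond the sentence ``Using the preceding two propositions, we have the following result,'' and you have simply spelled out which proposition handles which clause. One small slip in your final paragraph: the section of a shrinkable morphism is part of the definition and needs no cofibrancy hypothesis, and in Proposition~\ref{prop:tf-implies-e} it is cofibrancy of the \emph{codomain} (not the domain) that produces the section, while cofibrancy of the domain is used to build the homotopy.
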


\begin{remark}
  If all objects of \cv are fibrant and $I=1$, then in any interval
  $(J,d,c,e)$, the morphism $e\colon J\to I=1$ is not just a weak
  equivalence but a trivial fibration. Thus in this case any two
  intervals give the same notion of $\cv$-homotopy, and the \dsdr\ morphisms
  can be described using any one fixed interval.
\end{remark}

\begin{definition}
We write \ce for the collection of all \dsdr\ morphisms in \cv itself.
\end{definition}

\begin{examples}\label{ex:shrinking}
In each of our four main examples, all objects are cofibrant, and so
by Corollary~\ref{cor:all-cofibrant} every trivial fibration is
\dsdr, and every \dsdr\ morphism is a weak equivalence with a
section. In the first three of them, every weak equivalence with a
section is a trivial fibration, and so the \dsdr\ morphisms are
precisely the trivial fibrations. 
\begin{enumerate}
\item For $\cv$ equipped with the trivial model structure, the
  trivial fibrations and the \dsdr\ morphisms are both just the isomorphisms.
\item For $\cv$ with the split model structure, the trivial fibrations
  and the \dsdr\ morphisms are both just the split epimorphisms.
\item For $\Cat$ equipped with the canonical model structure, the trivial fibrations
  and the \dsdr\ morphisms are both just the surjective equivalences.
\item Now consider $\SSet$ equipped with the Joyal model structure.  By Corollary~\ref{cor:all-cofibrant}
the shrinkable morphisms lie between the weak equivalences and the trivial fibrations.  

 Not every \dsdr\ morphism is a trivial fibration. For example, let $J$ be the
  nerve of the free isomorphism, with its standard interval
  structure, then form the pushout of the two morphisms $1\to J$ to
  obtain a new interval $J'$, as in
  Proposition~\ref{prop:interval-for-transitivity}. The induced map
  $J'\to J$ is a \dsdr\  morphism 
 but is not a trivial fibration: in particular, $J$
  is fibrant while $J'$ is not, so $J'\to J$ cannot be a fibration.
  
 In general, all that we can say is that the \dsdr\ morphisms are weak
 equivalences with sections; however for a \dsdr\ morphism $f\colon X
 \to Y$ between quasicategories, the fibrant objects, we can say a
 little more.  First recall that an \emph{equivalence} of
 quasicategories is a map $f\colon X \to Y$ for which there exists a
 $g\colon Y \to X$ with $fg \cong_{J} 1_Y$ and $gf \cong_J 1_X$
 where $J$ is the nerve of the free isomorphism as discussed above.
 Let us call $f$ a \emph{surjective/retract equivalence} if in fact
 $fg = 1_Y$. Now since all objects are
 cofibrant in $\SSet$ each weak equivalence between quasicategories is
 a homotopy equivalence relative to any choice of interval; in
 particular, each \dsdr\ morphism $f\colon X \to Y$ between quasicategories is a surjective equivalence.
(Note, however, that we do not assert that the $(\cv,J)$-homotopy
$gf \cong_J 1_X$ is $f$-trivial --- indeed, while there is an
$f$-trivial $\cv$-homotopy relative to some interval, it does not seem that $f$-triviality is independent of the choice of interval.)
\end{enumerate}
\end{examples}

\begin{example}
  The category $\cgtop$ of compactly generated topological spaces is
  cartesian closed, and the standard model structure for topological
  spaces restricts to $\cgtop$ (see \cite[Section~2.4]{Hovey1999Model}
  for example). Here the notion of \dsdr\ morphism is dual to that of
  strong deformation retract, and it is in this case that the name
  ``shrinkable'' originated, as explained in the introduction. 
\end{example}

\begin{example}
  Another example is given by the monoidal model category $\Gray$: this is the
  category of (strict) 2-categories and 2-functors with the model
  structure of \cite{qm2cat}, as corrected in \cite{qmbicat}: the weak
  equivalences are the biequivalences, the trivial fibrations are the
  2-functors which are surjective on objects, and retract equivalences
  on homs. The monoidal structure is
  the Gray tensor product, as in \cite{GPS}. We shall see that
  in this case, the \dsdr\ morphisms are exactly the trivial
  fibrations which have a section.

  Not all objects are cofibrant, and not all
  trivial fibrations have sections, so not all trivial fibrations are
  \dsdr. On the other hand, all objects are fibrant, so we can use a
  standard interval for all $\cv$-homotopies; also all \dsdr\  morphisms are
  weak equivalences. If we use the free adjoint equivalence as our standard interval, a $\cv$-homotopy between $2$-functors in a pseudonatural equivalence. If $p\colon E\to B$
  is \dsdr, via $s\colon B\to E$ and a pseudonatural equivalence
  $sp\simeq 1$, then clearly $p$ is surjective on objects and an
  equivalence on hom-categories. Thus it will be a trivial fibration
  provided that it is full on 1-cells. For this, given $x,y\in E$ and
  a morphism $\beta\colon px\to py$ in $B$, we may compose with
  suitable components of the pseudonatural equivalence to get a
  morphism
  \[ \xymatrix{ x \ar[r]^{\sigma} & spx \ar[r]^{s\beta} & spy
      \ar[r]^{\sigma'} & y } \]
  and by $p$-triviality of $\sigma$, both $p\sigma$ and $p\sigma'$ are
  identities, and so $p$ is indeed full on 1-cells.

  Suppose conversely that $p\colon E\to B$ is a trivial fibration, and
  that $s\colon B\to E$ is a section of $p$. For each object $x\in E$,
  we have $pspx=px$ and so there is a morphism $\sigma_x\colon spx\to
  x$ with $p\sigma_x=1$ and similarly a morphism $\sigma'_x\colon x\to
  spx$, also over the identity. These form part of an equivalence,
  also over the identity. For any $\alpha\colon x\to y$ the square
  \[ \xymatrix{
      spx \ar[r]^{\sigma_x} \ar[d]_{sp\alpha} & x \ar[d]^{\alpha} \\
      spy \ar[r]_{\sigma_y} & y } \]
  need not commute, but both paths lie over the same morphism
  $p\alpha$ in $B$, and so there is a unique invertible 2-cell in the diagram,
  which is sent by $p$ to an identity. It now follows that $\sigma$
  determines an equivalence $sp\simeq 1$ over $B$, and so that $p$ is
  \dsdr.
\end{example}

\begin{example}
Let $R$ be a finite-dimensional cocommutative Hopf algebra, which is
Frobenius as a ring. Then the category of
modules over $R$ is a monoidal model category \cite[Section~2.2 and Proposition~4.2.15]{Hovey1999Model}.
All objects are cofibrant.  The trivial fibrations
are the surjections with projective kernel. In particular, the
codiagonal $R\oplus R\to R$  is a trivial fibration, and so $R\oplus
R$ itself is an interval. Thus the \dsdr\ morphisms are just the split
epimorphisms. Every trivial fibration is \dsdr, while the converse is
true (if and) only if every module is projective.
\end{example}

\begin{example}
  Let $R$ be a commutative ring, and $\Ch(R)$ the symmetric monoidal
  closed category of
  unbounded chain complexes of $R$-modules. This has a model structure \cite[Section~2.3]{Hovey1999Model}
  for which the fibrations are the pointwise surjections and the weak
  equivalences the quasi-isomorphisms. A morphism $f\colon X\to Y$ is
  \dsdr\ when it has a section $s$ and a chain homotopy between $sf$
  and $1$ which is $f$-trivial. This is of course rather stronger than
  being a trivial fibration.
\end{example}

The paper \cite{LackRosicky2012} introduced the notion of $\ce$-weak
left adjoint for any class of morphisms $\ce$.  Here we consider the
instance of this concept obtained by taking the class $\ce$ of \dsdr\
morphisms, and use the prefix ``shrink-'' to indicate $\ce$-weak
notions in this case. 

\begin{definition}
Let $U\colon \cb \to \ca$ be a \cv-functor.  A 
shrink-reflection of $A \in \ca$ is a morphism $\eta_A\colon A \to UA^{\prime}$ with the property that for all $B \in \cb$ the induced morphism
 \begin{equation*}
  \xymatrix{
  \cb(A^{\prime},B) \ar[r]^-{U} & \ca(UA^{\prime},UB) \ar[r]^-{\ca(\eta,UB)} &
      \ca(A,UB)}
  \end{equation*}
  is shrinkable.  If each object $A \in \ca$ admits a
  shrink-reflection then we say that $U$ admits a left
  shrink-adjoint. 
\end{definition}

\begin{examples}
\begin{enumerate}
\item For $\cv$ with the trivial model structure, a
  $\cv$-functor $U\colon \cb \to \ca$ as above admits a left shrink-adjoint just when it admits a genuine left adjoint.
\item For $\cv$ with the split model structure, $U\colon \cb \to \ca$
  admits a left shrink-adjoint  if we have morphisms $\eta \colon A \to UA^{\prime}$ for each $A$ such that the induced $\cb(A^{\prime},B) \to \ca(A,UB)$ is a split epimorphism.  In the case that $\cv = \Set$, this recovers ordinary weakness --- that is, for each $f\colon A \to UB$ there exists $f^{\prime}\colon A^{\prime} \to B$ with $Uf^{\prime} \circ \eta_A = f$.  In this case, setting $FA = A^{\prime}$ for each $A \in \cb$ and $Ff = (\eta_{B} \circ f)^{\prime}\colon FA \to FB$ for $f\colon A \to B$ equips $F$ with the structure of a graph morphism, but there is no reason why it should preserve composition. 
\item In the $\Cat$ case each $\cb(A^{\prime},B) \to \ca(A,UB)$ is a
  retract equivalence.  In this setting we can form $F$ as in the
  previous example, and with a similar definition $\alpha \mapsto
  F\alpha$ on $2$-cells.  Then given $f\colon A_1 \to A_2$ and
  $g\colon A_2 \to A_3$, since $\cb(A^{\prime}_1,A^{\prime}_3) \to
  \ca(A_1,UA^{\prime}_3)$ is a retract equivalence, we obtain a unique
  invertible $2$-cell such that $F_{f,g}\colon Fg \circ Ff \cong F(g
  \circ f)$.  Continuing in this way, $F$ becomes a
  pseudofunctor and we obtain a biadjunction in the sense of
  \cite{Kelly-limits}.
\item For $\SSet$ equipped with the Joyal model structure, if
  $\cb$ and $\ca$ are \emph{locally fibrant} --- that is, if they are enriched in quasicategories --- then each
  $\cb(A^{\prime},B) \to \ca(A,UB)$ is a retract equivalence of
  quasicategories.  As before, we can define $F$ as a graph morphism
  as in the preceding two examples.   Then given $f\colon A_1 \to A_2$
  and $g\colon A_2 \to A_3$ let us write
  $k\colon\cb(A^{\prime}_1,A^{\prime}_3) \to \ca(A_1,UA^{\prime}_3)$
  for the retract equivalence, with section $s$, and write $a = F(g
  \circ f)$ and $b=Fg \circ Ff$.  Then the $(\SSet,J)$-homotopies $1 \cong sk$
  and $sk \cong 1$ give a composite homotopy
  \[ F(g\circ f) = a \cong ska =skb \cong b = Fg \circ Ff \]
  in the hom-quasicategory $\cb(A^{\prime}_1,A^{\prime}_3)$,
  suggesting that $F$ could be turned into a pseudomorphism of
  simplicially enriched categories given a suitable formalisation of
  that concept.
 \end{enumerate}
\end{examples}

\section{Comma categories and slice categories}\label{sect:comma}

The proof of Freyd's general adjoint functor theorem given in
\cite{CWM} uses the fact 
that in order to construct a left adjoint to $U\colon\cb\to\ca$, it
suffices to construct an initial object in each comma category $A/U$,
for $A\in\ca$. Indeed, the universal property of the initial object
$\eta_A\colon A\to UA'$ implies that the induced map 
\begin{equation*}\label{eq:local}
\xymatrix @C3pc {
\cb(A^{\prime},B) \ar[r]^-{U} & \ca(UA^{\prime},UB) \ar[r]^-{\ca(\eta_{A},UB)} & \ca(A,UB)
}
\end{equation*}
is a \emph{bijection}, so that the maps $\eta_{A}\colon A \to
UA^{\prime}$ give the components of the unit for a left adjoint.  

In order to frame this in a manner suitable for generalisation,
observe that to say that the above function is a \emph{bijection} is
equally to say that it is a \emph{terminal object} in the slice category $\Set / \ca(A,UB)$ --- 
thus one passes from an initial object in the comma category to a terminal object in the slice.  

In the present section we develop the necessary results about comma
categories in the enriched context.  In our setting, we shall only be
able to construct \emph{bi-initial objects} in the comma category
$A/U$ and so are led to consider \emph{bi-terminal objects} in 
enriched slice categories, which turn out to be the \dsdr\
morphisms of the previous section. 

\subsection{Comma categories and slice categories}\label{subsect:comma}
Given $\cv$-functors $G\colon \cc \to \ca$ and $U\colon \cb \to \ca$
the enriched comma-category $G/U$ is the \emph{comma object}, below left

\begin{equation*}
\begin{xy}
(0,0)*+{G/U}="00";(20,0)*+{\cc}="10";(0,-15)*+{\cb}="01";(20,-15)*+{\ca}="11";
{\ar^{P} "00"; "10"};{\ar_{Q} "00"; "01"};{\ar^{G} "10"; "11"};{\ar_{U} "01"; "11"};
{\ar@{=>}^{\theta}(13,-6)*+{};(7,-12)*+{}};
\end{xy}
\hspace{1cm}
\xymatrix{
G/U \ar[d]_{\binom{P}{Q}} \ar[r] & \ca^{\two} \ar[d]^{\binom{\dom}{\cod}} \\
\cc\times\cb \ar[r]_-{G\times U} & \ca\x\ca
}
\end{equation*}
in the 2-category $\cv\cat$ of $\cv$-categories.  This has the universal
property that, given $\cv$-functors $R\colon \cd \to \cc$ and $S\colon \cd
\to \cb$, and a $\cv$-natural transformation $\lambda\colon G \circ R
\Rightarrow U \circ S$, there exists a unique $\cv$-functor $K\colon
\cd \to G/U$ such that $P \circ K = R$, $Q\circ K = S$
and $\theta \circ K =  \lambda$, as well as a 2-dimensional aspect
characterising \cv-natural transformations out of such a $K$.

For the reader unfamiliar with 2-dimensional limits, we point out that it is equally the pullback on the right above
in which $\ca^\two$ is the (enriched) arrow category, and $\binom{\dom}{\cod}$ the
projection which sends a morphism to its domain and codomain. 

The objects of $G/U$ consist of triples $(C, \alpha\colon GC \to
UB,B)$ where $C \in \cc$, $B \in \cb$, and $\alpha\colon GC \to UB$ is  in $\ca_0$; and with hom-objects as in the pullback below.
\begin{equation*}
\xymatrix{
G/U((C,\alpha,B),(C',\alpha',B')) \ar[dd] \ar[rr] && \cc(C,C') \ar[d]^{G} \\
&& \ca(GC,GC') \ar[d]^{\ca(1,\alpha^{\prime})} \\
\cb(B,B') \ar[r]_{U} & \ca(UB,UB') \ar[r]_{\ca(\alpha,1)} & \ca(GC,UB')}
\end{equation*}

A standard result of importance to us is the following one.

\begin{proposition}\label{prop:completeness}
Suppose that $\cb$ and $\cc$ admit $W$-limits for some weight
$W\colon\cd \to \cv$ and that $U\colon \cb \to \ca$ preserves them.
Then $G/U$ admits $W$-weighted limits for any $G\colon\cc\to\ca$, and they are preserved by the projections
$P\colon G/U \to \cc$ and $Q\colon G/U \to \cb$.
\end{proposition}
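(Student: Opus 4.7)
The plan is to construct the $W$-weighted limit of any diagram $H\colon \cd \to G/U$ directly, exploiting the hypothesis that $U$ preserves $W$-limits. Let $PH\colon \cd \to \cc$ and $QH\colon \cd \to \cb$ be the projections of $H$, and write $\gamma\colon GPH \Rightarrow UQH$ for the $\cv$-natural transformation obtained by whiskering $\theta$ with $H$. By hypothesis we form the $W$-weighted limits $L_\cc = \{W,PH\}$ in $\cc$ and $L_\cb = \{W,QH\}$ in $\cb$, with universal cylinders $\xi\colon W \to \cc(L_\cc, PH-)$ and $\zeta\colon W \to \cb(L_\cb, QH-)$.

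The decisive step is constructing a morphism $\alpha\colon GL_\cc \to UL_\cb$ in $\ca$. Since $U$ preserves $W$-weighted limits, $UL_\cb$ equipped with $U\zeta$ is itself a $W$-weighted limit of $UQH$; hence morphisms into $UL_\cb$ correspond to $\cv$-natural cylinders $W \to \ca(GL_\cc, UQH-)$. Such a cylinder is provided by the composite
\[
W \xrightarrow{\xi} \cc(L_\cc, PH-) \xrightarrow{G} \ca(GL_\cc, GPH-) \xrightarrow{\ca(GL_\cc, \gamma_-)} \ca(GL_\cc, UQH-),
\]
and the induced $\alpha$ satisfies $U\zeta_d \cdot \alpha = \gamma_d \cdot G\xi_d$ for every $d \in \cd$. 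By the pullback description of the hom-objects of $G/U$, this equality ensures that $(\xi_d, \zeta_d)$ defines a morphism $(L_\cc, \alpha, L_\cb) \to H(d)$ in $G/U$, and that $(\xi, \zeta)$ assemble into a $\cv$-natural cylinder $\mu\colon W \to (G/U)((L_\cc, \alpha, L_\cb), H-)$.

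To verify the universal property, consider any $(C, \beta, B) \in G/U$ and a cylinder $W \to (G/U)((C,\beta,B), H-)$. Projecting along $P$ and $Q$ yields cylinders on $PH$ and $QH$, which by the universal properties of $L_\cc$ and $L_\cb$ factor uniquely through morphisms $f\colon C \to L_\cc$ and $g\colon B \to L_\cb$. The pair $(f,g)$ defines a morphism in $G/U$ provided $Ug \cdot \beta = \alpha \cdot Gf$, and this equation is again checked by composing with $U\zeta_d$ for each $d$: both sides then represent the same $W$-cylinder on $UQH$, and the limiting property of $UL_\cb$ forces equality. The enriched (2-dimensional) aspect of the universal property is verified analogously, using that the hom-object description of $(G/U)(X, Y)$ is a pullback of hom-objects in $\cc$, $\cb$, $\ca$. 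Preservation of the limit by $P$ and $Q$ is then immediate from the construction.

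The main obstacle is arranging $\alpha$ so that the rest becomes formal; this is exactly where the preservation hypothesis on $U$ is used, since without it one would only obtain a canonical comparison into $\{W, UQH\}$ rather than into $UL_\cb$. Once $\alpha$ is in hand, all remaining verifications reduce to routine cylinder computations and the universal properties of $L_\cc$, $L_\cb$, and (via $U$-preservation) $UL_\cb$.
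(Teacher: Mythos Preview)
Your proof is correct and follows essentially the same strategy as the paper: construct the candidate limit as $(\{W,PH\},\alpha,\{W,QH\})$ with $\alpha$ obtained from the preservation hypothesis on $U$, then verify universality via the pullback description of the homs in $G/U$. The paper packages the verification more compactly by observing that the $W$-cone functor $\cone{W}(-,T)$ itself sits in a pullback of representable functors (which is exactly your final parenthetical remark made into the main argument), thereby handling the full enriched universal property in one step rather than treating the underlying-category and ``2-dimensional'' aspects separately; but the content is the same.
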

\begin{proof} 
Consider a diagram $T=(R,\lambda,S)\colon\cd \to G/U$ and let us write $$\cone{W}(X,T) := [\cd,\cv](W,G/U(X,T-))$$ for $X=(C,\alpha,B) \in G/U$.  We must prove that
 $\cone{W}(-,T)\colon G/U^{op} \to \cv$ is representable.  First observe that the definition of the hom-objects in $G/U$ gives us the components of a pullback
\begin{equation*}
\begin{xy}
(0,0)*+{G/U(X,T-)}="00";(85,0)*+{\cc(C,S-)}="20";
(85,-12)*+{\ca(GC,GS-)}="21";
(0,-24)*+{\cb(B,R-)}="02";(40,-24)*+{\ca(UB,UR-)}="12";(85,-24)*+{\ca(GC,UR-)}="22"; 
{\ar^{P} "00"; "20"};{\ar^{G} "20"; "21"};{\ar^{\lambda_{*}} "21"; "22"};{\ar_{Q} "00"; "02"};{\ar_-{U} "02"; "12"};{\ar_-{\alpha^{*}} "12"; "22"};
\end{xy}
\end{equation*}
in $[\cd,\cv]$.  Applying $[\cd,\cv](W,-)$, we obtain a pullback
\begin{equation}\label{eq:commapullback}
\begin{xy}
(0,0)*+{\cone{W}(X,T)}="00";(85,0)*+{\cone{W}(C,S)}="20";
(85,-12)*+{\cone{W}(GC,GS)}="21";
(0,-24)*+{\cone{W}(B,R)}="02";(40,-24)*+{\cone{W}(UB,UR)}="12";(85,-24)*+{\cone{W}(GC,UR)}="22"; 
{\ar^{P_{*}} "00"; "20"};{\ar^{G_{*}} "20"; "21"};{\ar^{\lambda_{*}} "21"; "22"};{\ar_{Q_{*}} "00"; "02"};{\ar_-{U_{*}} "02"; "12"};{\ar_-{\alpha^{*}} "12"; "22"};
\end{xy}
\end{equation}
in $\cv$.  Since the limits $\{W,S\}$ and $\{W,R\}$ exist and since $U$ preserves the latter, this pullback square is isomorphic to
\begin{equation*}
\begin{xy}
(0,0)*+{\cone{W}(X,T)}="00";(85,0)*+{\cc(C,\{W,S\})}="20";
(85,-12)*+{\ca(GC,G\{W,S\})}="21";
(0,-24)*+{\cb(B,\{W,R\})}="02";(40,-24)*+{\ca(UB,U\{W,R\})}="12";(85,-24)*+{\ca(GC,U\{W,R\})}="22"; 
{\ar^{} "00"; "20"};{\ar^{G} "20"; "21"};{\ar^{\varphi_{*}} "21"; "22"};{\ar^{} "00"; "02"};{\ar_-{U} "02"; "12"};{\ar_-{\alpha^{*}} "12"; "22"};
\end{xy}
\end{equation*}
for $\varphi \colon G \{W, S\} \to U \{W, R\}$ the morphism induced by 
\begin{equation*}
\xymatrix{
W \ar[r]^-{} & \cc(\{W,S\},S-) \ar[r]^-{G} & \ca(G\{W,S\},GS-) \ar[r]^{\lambda_{*}} & \ca(G\{W,S\},UR-)
}
\end{equation*}
and the universal property of $U\{W,R\}$.  By definition, the pullback in this square is
$G/U((C,\alpha,B),(\{W,S\},\varphi,\{W,R\}))$.  Hence the object $(\{W,S\},\varphi,\{W,R\})$ represents $\cone{W}(-, T)$ and is therefore the limit $\{W, T\}$.
\end{proof}

Let $\ci$ denote the \emph{unit \cv-category}, which has a single object with object of endomorphisms the monoidal unit $I$ of $\cv$.  An object $A \in \ca$ then corresponds to a $\cv$-functor $A\colon \ci \to \ca$.  We denote by $A/U$ the comma object depicted below left 
\begin{equation*}
\begin{xy}
(0,0)*+{A/U}="00";(20,0)*+{\ci}="10";(0,-15)*+{\cb}="01";(20,-15)*+{\ca}="11";
{\ar^{ P} "00"; "10"};{\ar_{ Q} "00"; "01"};{\ar^{A} "10"; "11"};{\ar_{U} "01"; "11"};
{\ar@{=>}^{\theta}(13,-6)*+{};(7,-12)*+{}};
\end{xy}
\hspace{1cm}
\begin{xy}
(0,0)*+{\ca/A}="00";(20,0)*+{\ca}="10";(0,-15)*+{\ci}="01";(20,-15)*+{\ca}="11";
{\ar^{ P} "00"; "10"};{\ar_{ Q} "00"; "01"};{\ar^{1} "10"; "11"};{\ar_{A} "01"; "11"};
{\ar@{=>}^{\theta}(13,-6)*+{};(7,-12)*+{}};
\end{xy}
\end{equation*}
in which we shall consider bi-initial objects, and the \emph{enriched
  slice category} $\ca/A$ as above right in which we shall consider
bi-terminal ones.  We record for later use that if $(B,b\colon
B\to A)$ and $(C,c\colon C\to A)$ are objects of $\ca/A$, the
corresponding hom is given by the pullback
\begin{equation}
  \label{eq:slice}
  \xymatrix @R2pc @C2pc {
    (\ca/A)((B,b),(C,c)) \ar[r] \ar[d] & \ca(B,C) \ar[d]^{\ca(B,c)} \\
    I \ar[r]_-b & \ca(B,A).
    } 
\end{equation}

\begin{remark}\label{remark:problems}
If the unit object of \cv is also terminal, then $\ci$ is the terminal
\cv-category, and is complete and cocomplete. We may then use
Proposition~\ref{prop:completeness} to deduce that, if $\cb$ has
$W$-limits and $U\colon\cb\to\ca$ preserves them, then $A/U$ has
$W$-limits for any $A\in\ca$. But if $I\neq 1$, then $\ci$ is not complete; indeed, almost by definition, $\ci$ has a terminal object if and only if $I=1$.
\end{remark}

\begin{proposition}\label{prop:terminalslice}
Let $\ca$ be a $\cv$-category and $A$ an object of $\ca$. The identity morphism on $A$ is a terminal object in the slice $\cv$-category $\ca/A$ if and only if $I=1$.
\end{proposition}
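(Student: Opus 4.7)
The plan is to compute the hom-object $(\ca/A)((B,b),(A,1_A))$ directly from the pullback formula~\eqref{eq:slice} and read off when it is a terminal object of $\cv$ for every object $(B,b)$ of $\ca/A$.

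Substituting $(C,c) = (A, 1_A)$ into~\eqref{eq:slice}, the right-hand vertical becomes $\ca(B, 1_A)\colon \ca(B,A) \to \ca(B,A)$. By functoriality of the enriched hom in its second variable, this is the identity on $\ca(B,A)$, and pulling $b \colon I \to \ca(B,A)$ back along an identity simply returns $I$. Hence
\[
(\ca/A)((B,b),(A,1_A)) \;\cong\; I
\]
for every object $(B,b)$ of $\ca/A$, with top leg $b$ and left leg the identity.

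Recall that an object $T$ of a $\cv$-category $\cd$ is terminal exactly when each hom-object $\cd(X,T)$ is a terminal object of $\cv$. By the isomorphism above, $(A, 1_A)$ is terminal in $\ca/A$ if and only if $I$ is terminal in $\cv$, that is, if and only if $I = 1$. The ``only if'' direction is witnessed by taking $(B,b) = (A, 1_A)$, and the converse is immediate from the same isomorphism.

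No real obstacle arises --- the content is a one-step pullback calculation together with the enriched definition of terminal object --- but its significance, already foreshadowed by Remark~\ref{remark:problems}, is that when $I \neq 1$ even the most obvious candidate for a terminal object in $\ca/A$ fails to be one in the enriched sense. This is precisely what motivates the passage from $\cv$-enrichment to $\cv/I$-enrichment carried out in Section~\ref{sect:general}.
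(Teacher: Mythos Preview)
Your proof is correct and follows essentially the same approach as the paper: both compute $(\ca/A)((B,b),(A,1_A))$ via the pullback~\eqref{eq:slice}, observe that the right vertical $\ca(B,1_A)$ is an identity so the pullback is $I$, and conclude that terminality holds iff $I$ is terminal in $\cv$. Your version is slightly more explicit in spelling out the enriched definition of terminal object and in isolating the ``only if'' direction, but the argument is the same.
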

\begin{proof}
For any $f\colon B\to A$ we have the pullback
\[ \xymatrix{
  (\ca/A)((B,f),(A,1_A)) \ar[r] \ar[d]  & \cb(B,A) \ar[d]^{\cb(B,1_A)} \\
  I \ar[r]_{f}  & \cb(B,A) } \]
  and since the pullback of an isomorphism is an isomorphism, we
  conclude that $(\ca/A)((B,f),(A,1_A)) \cong I$, which is terminal if
  and only if $I$ is so.
\end{proof}

\begin{example}
By the preceding result, $1\colon A \to A$ is never terminal in the $\cv$-category $\ca/A$ unless $I=1$.
For a concrete example, let $\cv=\Ab$, and consider $\Ab/A$ for a
given abelian group $A$. An object consists of a homomorphism $f\colon
B\to A$, and a morphism from $(B,f)$ to $(A,1_A)$ consists of a pair
$(h,n)$ where $h\colon B\to A$ is a homomorphism, $n\in\bbz$, and
$h=n.f$. There is such a morphism $(n.f,n)$ for any $n\in\bbz$, and
these are all distinct, thus $(A,1_A)$ is not terminal.
\end{example}

\begin{proposition}\label{prop:ER_homotopy_category}
Suppose that $I=1$, and let $U\colon\cb \to \ca$ and $A \in \ca$.  
\begin{enumerate}
\item Then $A/U$ has underlying category $A/U_0$.  Furthermore, given parallel morphisms in $A/U$
\begin{equation*}
\xymatrix{
& A \ar[dl]_{b} \ar[dr]^{c} \\
UB \ar@<0.7ex>[rr]^{Uf} \ar@<-0.7ex>[rr]_{Ug} && UC}
\end{equation*}
a $\cv$-homotopy $f \cong g$ in $A/U$ is a $\cv$-homotopy $h\colon f \cong g$ in $\cb$ such that $Uh \circ b$ is trivial, which we will refer to as a $\cv$-homotopy under $A$.
\item Then $\ca/A$ has underlying category $\ca_0/A$.  Furthermore, given parallel morphisms in $\ca/A$
\begin{equation*}
\xymatrix{
B  \ar[dr]_{b} \ar@<0.7ex>[rr]^{f} \ar@<-0.7ex>[rr]_{g} && C \ar[dl]^{c} \\
& A}
\end{equation*}
a $\cv$-homotopy $f \cong g$ in $\ca/A$ is a $\cv$-homotopy $h\colon f \cong g$ in $\ca$ such that $c \circ h$ is trivial, which we will refer to as a $\cv$-homotopy over $A$.
\end{enumerate}
\end{proposition}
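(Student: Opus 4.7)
The plan is to read off both claims directly from the pullback descriptions of the relevant hom-objects, making essential use of the hypothesis $I = 1$ so that every map into $I$ is uniquely determined by its source.

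For part (2), I would start from the pullback \eqref{eq:slice} defining $(\ca/A)((B,b),(C,c))$. A morphism in the underlying category is a map $I \to (\ca/A)((B,b),(C,c))$; by the universal property of the pullback --- the $I \to I$ leg being forced to be the identity when $I = 1$ --- this amounts to a map $f\colon I \to \ca(B,C)$ satisfying $\ca(B,c) \circ f = b$, that is, a morphism $f \in \ca_0(B,C)$ with $c \circ f = b$. This is precisely a morphism of $\ca_0/A$. For the second half, a $\cv$-homotopy from $f$ to $g$ in $\ca/A$ is a map $h\colon J \to (\ca/A)((B,b),(C,c))$ extending $(f,g)\colon I+I$. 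Invoking the same pullback, it corresponds to a $\cv$-homotopy $h'\colon J \to \ca(B,C)$ from $f$ to $g$ in $\ca$ together with the unique leg $e\colon J \to I$, subject to $\ca(B,c) \circ h' = b \circ e$. The latter equation says precisely that $c \circ h'$ factors through $e$, that is, is trivial --- which is exactly what it means for $h'$ to be a $\cv$-homotopy over $A$.

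Part (1) proceeds by the analogous argument, starting from the pullback in Section~\ref{subsect:comma} defining the hom-objects of a comma $\cv$-category, specialised to $G = A\colon \ci \to \ca$. Here the top-right corner $\ci(*,*) = I$ contributes the map $I \to \ca(A,A)$ picking out $1_A$. With $I = 1$ the upper leg is again forced to be the unique map to $I$, so the pullback collapses to one of shape
\begin{equation*}
\xymatrix{
A/U((A,b,B),(A,c,C)) \ar[r] \ar[d] & \cb(B,C) \ar[d] \\
I \ar[r]_-c & \ca(A,UC),
}
\end{equation*}
whose right vertical sends $f$ to $Uf \circ b$ and whose bottom horizontal picks out $c$. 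Exactly as before, maps $I \to A/U((A,b,B),(A,c,C))$ then correspond to morphisms $f \in \cb_0(B,C)$ satisfying $Uf \circ b = c$ (i.e.\ morphisms of $A/U_0$), while $\cv$-homotopies $h$ in $A/U$ correspond to $\cv$-homotopies $h'$ in $\cb$ subject to $Uh' \circ b = c \circ e$, i.e.\ $Uh' \circ b$ is trivial --- precisely the notion of $\cv$-homotopy under $A$.

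I do not foresee any genuine obstacle; the argument is essentially book-keeping with pullbacks. The only point requiring care is to identify, in each case, the correct composite ($c \circ h'$ in part (2), $Uh' \circ b$ in part (1)) whose triviality is forced by pullback compatibility once the $J \to I$ leg has been fixed to be $e$.
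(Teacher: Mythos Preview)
Your proposal is correct and follows essentially the same approach as the paper: both arguments identify the relevant hom-object as a pullback with $I$ in one corner, use $I=1$ to force the $I$-component of any map from $I$ or $J$ into the pullback, and then read off the remaining compatibility condition. The only cosmetic differences are that the paper treats part~(1) first and obtains part~(2) by duality (whereas you do the reverse), and your phrase ``the pullback collapses'' is slightly misleading --- the displayed pullback shape already holds for $G=A$ regardless of $I=1$; the hypothesis $I=1$ enters only when analysing maps \emph{into} that pullback.
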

\begin{proof}
A morphism in $(A/U)_0$ is given by a map into the pullback as in 
\[\xymatrix{
    I \ar@{.>}[dr] \ar@/_1.5pc/[dddr]_{f'=1} \ar@/^1.5pc/[drr]^f \\
    & (A/U)((B,b),(C,c)) \ar[r] \ar[dd] & \cb(B,C) \ar[d]^-U \\
    && \ca(UB,UC) \ar[d]^{b^*} \\
 & I \ar[r]_{c} &  \ca(A,UC)
}\]
where the map $f'\colon I\to I$ is necessarily the identity by virtue of the assumption that
$I = 1$.  Therefore it amounts to a morphism $f\colon B \to C$ in
$\cb_0$ such that $Uf \circ b = c$: that is, a morphism of $A/U_0$.  Similarly, for $J$ an interval, a $(\cv,J)$-homotopy $f \cong g$ in $A/U$ is specified by a map from $J$ into the pullback whose component in $I$ is fixed as the structure map for the interval $e\colon J \to I$, again by the assumption that $I = 1$.  The component in $\cb(B,C)$ then specifies a $(\cv,J)$-homotopy $h\colon f \cong g$ for which $Uh \circ b$ is trivial.  

This analysis applies to the special case $A/\ca$ and so, by duality, establishes the corresponding claim for $\ca/A$.
\end{proof}

\begin{proposition}\label{prop:dsdrBiterminal}
If $I=1$ then $f \colon A \to B$ is \dsdr\ if and only if it is bi-terminal in $\cc/B$.
\end{proposition}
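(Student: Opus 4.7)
The plan is to translate both conditions through Proposition~\ref{prop:ER_homotopy_category}(2), which identifies a $\cv$-homotopy in $\cc/B$ between parallel morphisms $k, k'\colon (X,g) \to (A,f)$ with a $\cv$-homotopy $h'\colon k \cong k'$ in $\cc$ whose post-composition $f \circ h'$ is trivial. Under this identification, bi-terminality of $(A,f)$ in $\cc/B$ reads: for every $(X, g\colon X \to B)$ there exists $k\colon X \to A$ with $f\circ k = g$, and any two such $k, k'$ are related by a $\cv$-homotopy $h'\colon k \cong k'$ in $\cc$ with $f \circ h'$ trivial.

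For the forward direction, suppose $f$ is shrinkable with section $s$ and homotopy $h\colon J \to \cc(A,A)$ from $1_A$ to $sf$ with $fh$ trivial. Given $(X, g) \in \cc/B$, the map $sg\colon X \to A$ satisfies $f \circ sg = g$, supplying a morphism into $(A,f)$. Given any other $k\colon X \to A$ with $fk = g$, right-whiskering $h$ by $k$ (as in the remark following Proposition~\ref{prop:interval-for-tensor}) produces $h \circ k\colon k \cong sfk = sg$, and $f \circ (h \circ k) = (fh) \circ k$ is trivial because pre-composition preserves factorisation through $e\colon J \to I$. Reversing the interval $(J,d,c,e)$ to $(J,c,d,e)$ gives a homotopy $sg \cong k'$ over $B$, and splicing the two via Proposition~\ref{prop:interval-for-transitivity} yields a $\cv$-homotopy $k \cong k'$ over $B$.

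For the backward direction, bi-terminality of $(A,f)$ applied to the object $(B, 1_B) \in \cc/B$ produces a morphism $s\colon B \to A$ with $f \circ s = 1_B$. Since $1_A$ and $sf$ both satisfy $f \circ (-) = f$, they furnish parallel morphisms $(A,f) \rightrightarrows (A,f)$ in $\cc/B$, and bi-terminality supplies a $\cv$-homotopy between them which under Proposition~\ref{prop:ER_homotopy_category}(2) is exactly a $\cv$-homotopy $h\colon 1_A \cong sf$ in $\cc$ with $fh$ trivial. This is precisely the data exhibiting $f$ as shrinkable.

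The main technical point, arising in the splicing step of the forward direction, is verifying that the pushout-composition of Proposition~\ref{prop:interval-for-transitivity} respects $f$-triviality, so that $\cv$-homotopy over $B$ is genuinely transitive. This is routine: if $f \circ h_1$ and $f \circ h_2$ both factor through $e_i\colon J_i \to I$ picking out the common morphism $g\colon X \to B$, the universal property of the pushout produces a factorisation of the glued homotopy through $e\colon J \to I$.
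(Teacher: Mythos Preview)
Your proof is correct, but it takes a more laborious route than the paper's. The paper first invokes Proposition~\ref{prop:terminalslice} to observe that $(B,1_B)$ is \emph{terminal} in $\cc/B$; bi-terminality of $(A,f)$ is then equivalent to the unique map $(A,f)\to(B,1_B)$ being an equivalence in $\hh_*(\cc/B)$. Unwinding what such an equivalence means via Proposition~\ref{prop:ER_homotopy_category}(2) yields exactly a section $s$ of $f$ together with a $\cv$-homotopy $sf\cong 1_A$ over $B$, which is the shrinkability data. Both directions are handled at once, and no object $(X,g)$ other than $(B,1_B)$ and $(A,f)$ is ever considered.

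By contrast, you verify bi-terminality object-by-object in the forward direction, which forces you to produce a morphism from each $(X,g)$ and to check essential uniqueness by whiskering, reversing, and splicing homotopies. This works, and your closing paragraph correctly handles the only delicate point (that the pushout-composed homotopy remains $f$-trivial), but it is extra labour that the paper avoids entirely by reducing to a single equivalence. Your approach does have the minor advantage of not relying on Proposition~\ref{prop:terminalslice}, though that result is itself a one-line pullback argument.
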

\begin{proof}
By Proposition~\ref{prop:terminalslice} the identity $1_B$ is terminal in $\cc/B$.  Therefore $f \colon A \to B$ is bi-terminal if and only if the unique map $f\colon (A,f) \to (B,1_B)$ is an equivalence.  Now a morphism $s\colon (B,1_B) \to
(A,f)$ is specified by a section $s\colon B \to A$ of $f$ so that we have $f \circ s = 1_B\colon (B,1_B) \to (B,1_B)$.  By Proposition~\ref{prop:ER_homotopy_category} a $\cv$-homotopy $s \circ f \cong 1_A$ in $\cc/B$ is specified by a $\cv$-homotopy $h\colon s \circ f \cong 1_A$ in $\cc$ such that $f \circ h$ is trivial, as required. 
\end{proof}

 In the case $I$ is not terminal, the above proposition need not hold.  We deal with this, and other issues related to a non-terminal $I$, by passing from \cv to the slice category $\cv/I$, which has an induced
monoidal structure for which the unit is terminal. Moreover, the comma $\cv$-categories $A/U$ and $\ca/A$ naturally give rise to $\cv/I$-categories.
See Section~\ref{sect:general} below.

\begin{theorem}\label{theorem:biterm}
Suppose that $I=1$.  Let \cb be a \cv-category with powers, and $U\colon\cb\to\ca$ a
  \cv-functor which preserves powers; if $\eta\colon A\to
  UA'$ is bi-initial in $A/U$ then 
  \begin{equation}\label{eq:value}
 \xymatrix{
      \cb(A^{\prime},B) \ar[r]^-{U} & \ca(UA',UB) \ar[r]^-{\ca(\eta,UB)} &
      \ca(A,UB) }
   \end{equation}
  is bi-terminal in $\cv/ \ca(A,UB)$.
\end{theorem}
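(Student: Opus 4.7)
The plan is to reduce the claim to showing that the morphism \eqref{eq:value}, which I will denote $\psi\colon \cb(A',B) \to \ca(A,UB)$, is \dsdr\ in $\cv$, and then to construct both the section of $\psi$ and the shrinking homotopy by applying bi-initiality of $\eta$ to objects of $A/U$ built from powers of $B$. Since $I=1$, Proposition \ref{prop:dsdrBiterminal} applied to $\cv$ (viewed as a $\cv$-category) identifies shrinkable morphisms into $X = \ca(A,UB)$ with bi-terminal objects of $\cv/X$, so the reduction is legitimate. Set $Y = \cb(A',B)$. The construction uses freely the adjunction isomorphisms $\cb(A', B^Z) \cong [Z,Y]$ and $\ca(A, UB^Z) \cong [Z, X]$ for $Z \in \cv$, the latter because $U$ is assumed to preserve the powers guaranteed by the hypothesis on $\cb$.

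To produce the section, take $Z = X$ and let $b\colon A \to UB^X$ be the map transposing to $1_X$. Bi-initiality of $\eta$ supplies $f\colon A' \to B^X$ in $\cb$ with $Uf\circ \eta = b$. Transposing $f$ yields $s\colon X \to Y$, and a routine unwinding of the adjunction shows that $Uf\circ\eta$ transposes to $\psi \circ s$; hence $\psi \circ s = 1_X$.

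To produce the shrinking homotopy, take $Z = Y$ and let $g_1, g_2\colon A' \to B^Y$ transpose to $1_Y$ and $s\circ\psi$ respectively. Since $\psi \circ s = 1_X$, both $Ug_1\circ\eta$ and $Ug_2\circ\eta$ transpose to $\psi$, so they agree as morphisms $A \to UB^Y$. The uniqueness half of bi-initiality, unpacked through Proposition \ref{prop:ER_homotopy_category}, provides an interval $J$ and a $\cv$-homotopy $h'\colon g_2 \cong g_1$ in $\cb$ for which the composite
\[
J \to \ca(UA', UB^Y) \to \ca(A, UB^Y),
\]
obtained by applying $U$ and then precomposing with $\eta$, factors through $e\colon J \to I$. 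Transposing $h'$ across the power adjunction produces $\hat h\colon J \to [Y,Y]$, a $\cv$-homotopy $s\psi \cong 1_Y$ in $\cv$, and the triviality condition above transposes to the statement that $[Y,\psi]\circ\hat h\colon J \to [Y,X]$ factors through $e$ --- precisely the assertion that $\psi\hat h$ is trivial. Thus $\psi$ is \dsdr, completing the proof.

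The main obstacle is the careful bookkeeping of transposes in the second step: verifying that the $\eta$-triviality condition in $A/U$ really does transpose to the $\psi$-triviality demanded by the definition of \dsdr, and that the endpoints of $h'$ transpose to the correct endpoints $s\psi$ and $1_Y$ for $\hat h$. The rest is standard manipulation with the power adjunction and bi-initiality.
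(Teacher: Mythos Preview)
Your proof is correct and uses essentially the same mechanism as the paper: translating the problem in the slice $\cv/\ca(A,UB)$ back into $A/U$ via the power adjunction, and then invoking bi-initiality of $\eta$. The only organisational difference is that the paper verifies bi-terminality directly --- for an \emph{arbitrary} object $\alpha\colon X\to\ca(A,UB)$ it produces a morphism to $\psi$, and for an arbitrary parallel pair it produces a homotopy over $\ca(A,UB)$ --- whereas you first invoke Proposition~\ref{prop:dsdrBiterminal} to reduce to shrinkability and then only need the specific instances $X=\ca(A,UB)$ (for the section) and $Y=\cb(A',B)$ (for the shrinking homotopy). Your route is marginally more economical in that it treats fewer objects, at the cost of the extra citation; the paper's route makes the bi-terminality visible without the detour. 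The transpose bookkeeping you flag as the main obstacle is handled correctly: the naturality of the power adjunction does indeed carry $Uh'\circ\eta$-triviality to $[Y,\psi]\circ\hat h$-triviality, and the endpoints land where they should.
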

\begin{proof}
By Proposition~\ref{prop:ER_homotopy_category} we need to find, for any object $\alpha$ of $\cv/\ca(A,UB)$ a morphism $f$ of $\cv/\ca(A,UB)$ as below
\[\xymatrix{
X \ar[rd]_-\alpha \ar@{-->}[rr]^-f & & \cb(A', B) \ar[ld] \\
& \ca(A, UB)
}\]
and for any two such morphisms a $\cv$-homotopy between them over $\ca(A, UB)$. 

Using the universal property of the power $X \pitchfork B$ and the fact that $U(X \pitchfork B) \cong X \pitchfork UB$, the above problem is translated to $A/U$ as below.
\[\xymatrix{
& A \ar[ld]_-\eta \ar[rd]^-{\alpha^\sharp} \\
UA' \ar@{-->}[rr]_-{Uf^\sharp} & & U(X \pitchfork B) \\
A' \ar@{-->}[rr]_-{f^\sharp} & & X \pitchfork B
}\]
Since $\eta$ is bi-initial, the required morphism $f^{\sharp}$ and so $f$ exists. If $f$ and $g$ are two such morphisms then we can, by bi-initiality of $\eta$, find a $\cv$-homotopy between $f^\sharp$ and $g^\sharp$ under $A$; writing it as
\[h^\sharp \colon A' \to J \pitchfork (X \pitchfork B) = X \pitchfork (J \pitchfork B),\]
we obtain $h \colon X \to \cb(A', J \pitchfork B) \cong J \pitchfork \cb(A', B)$ and this is the required $\cv$-homotopy between $f$ and $g$ over $\ca(A,UB)$
\end{proof}

\begin{remark}
A more conceptual explanation for the preceding result is as follows.  Given $U\colon\cb\to\ca$ and objects $B \in \cb$ and $A \in \ca$, we obtain a $\cv$-functor $K\colon (A/U)^{op} \to \cv/\ca(A,UB)$ sending $\eta\colon A \to UA'$ to the morphism \eqref{eq:value}.  If $\cb$ has powers and $U$ preserves them, then $K$ has a left $\cv$-adjoint.  The preceding result then follows immediately.
\end{remark}

\section{Enough cofibrantly-weighted limits}\label{sect:initial} 

In this section we describe the completeness and continuity conditions
which will arise in our adjoint functor theorem.
We start by recalling the notion of cofibrantly-weighted limit.

\subsection{Cofibrantly-weighted limits}

Let \cd be a small \cv-category, and consider the \cv-category
$[\cd,\cv]$ of \cv-functors from \cd to \cv. The morphisms in (the
underlying ordinary category of) $[\cd,\cv]$ are the \cv-natural
transformations. We shall say that such a \cv-natural transformation
$p\colon F\to G$ is a {\em trivial fibration} if it is so in the
pointwise/levelwise sense: that is, if each component $pD\colon FD\to
GD$ is a trivial fibration in \cv. 

We say that an object $Q\in[\cd,\cv]$ is {\em cofibrant} if, for each
trivial fibration $p\colon F\to G$ in $[\cd,\cv]$, the induced morphism
\[ [\cd,\cv](Q,p)\colon [\cd,\cv](Q,F)\to [\cd,\cv](Q,G) \]
is a trivial fibration in \cv.
In many cases, the projective model structure on $[\cd,\cv]$ will
exist and make $[\cd,\cv]$ into a model \cv-category, and then these
notions of cofibrant object and trivial fibration will be the ones that apply there. 

\begin{example}\label{ex:representable}
  Each representable $\cd(D,-)$ is cofibrant: this follows immediately
  from the Yoneda lemma, since $[\cd,\cv](\cd(D,-),p)$ is just
  $pD\colon FD\to GD$.
\end{example}

\begin{proposition}\label{prop:coprodscopowers}
  The cofibrant objects in $[\cd,\cv]$ are closed under coproducts and
  under copowers by cofibrant objects of \cv. 
\end{proposition}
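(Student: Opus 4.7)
The plan is to reduce both closure properties to standard closure properties of trivial fibrations in the monoidal model category $\cv$, via the defining adjunctions for coproducts and copowers in $[\cd,\cv]$.

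For coproducts, suppose $\{Q_i\}_{i\in I}$ is a family of cofibrant objects in $[\cd,\cv]$, and let $p\colon F\to G$ be a trivial fibration. The universal property of the coproduct gives an isomorphism
\[
[\cd,\cv]\bigl(\textstyle\coprod_i Q_i,\,p\bigr) \;\cong\; \textstyle\prod_i [\cd,\cv](Q_i,p)
\]
in the arrow category of $\cv$. Each factor on the right is a trivial fibration in $\cv$ by cofibrancy of $Q_i$. Since trivial fibrations in a model category are the right class of a weak factorisation system, they are closed under products, so the left-hand morphism is a trivial fibration, establishing that $\coprod_i Q_i$ is cofibrant.

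For copowers, let $K\in\cv$ be cofibrant and $Q\in[\cd,\cv]$ be cofibrant, and form the copower $K\cdot Q$ (computed pointwise as $(K\cdot Q)(D)=K\ox QD$). The defining adjunction of the copower gives an isomorphism
\[
[\cd,\cv](K\cdot Q,\,p) \;\cong\; [K,\,[\cd,\cv](Q,p)]
\]
in the arrow category of $\cv$, where $[-,-]$ denotes the internal hom of $\cv$. Since $Q$ is cofibrant, $[\cd,\cv](Q,p)$ is a trivial fibration in $\cv$. Now $K$ cofibrant means $0\to K$ is a cofibration in $\cv$, and the pushout-product axiom for the monoidal model category $\cv$ implies that $[K,-]$ sends trivial fibrations to trivial fibrations (equivalently, its left adjoint $K\ox-$ preserves cofibrations between cofibrant objects, and the adjoint form of SM7 yields the required preservation). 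Hence $[\cd,\cv](K\cdot Q,p)$ is a trivial fibration, so $K\cdot Q$ is cofibrant.

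There is no real obstacle here; the only subtlety is to invoke the correct form of the pushout-product axiom for the copower case, namely its adjoint reformulation guaranteeing that $[K,-]\colon \cv\to\cv$ preserves trivial fibrations whenever $K$ is cofibrant, which is exactly the part of the monoidal model category hypothesis one needs.
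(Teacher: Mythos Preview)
Your proof is correct and follows essentially the same approach as the paper: for copowers you use the very same isomorphism $[\cd,\cv](K\cdot Q,p)\cong[K,[\cd,\cv](Q,p)]$ and then invoke cofibrancy of $Q$ followed by the monoidal model category axiom for cofibrant $K$. The only difference is that you spell out the coproduct case explicitly (via closure of trivial fibrations under products), whereas the paper simply declares it well-known.
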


\proof
The case of coproducts is well-known. As for copowers, let
$Q\colon\cd\to\cv$ and $X\in\cv$ be cofibrant. If $p\colon F\to G$ is
a trivial fibration in $[\cd,\cv]$ then $[\cd,\cv](X\cdot Q,p)$ is (up
to isomorphism) given by $[X,[\cd,\cv](Q,p)]$. Since $Q$ is cofibrant
and $p$ is a trivial fibration, it follows that $[\cd,\cv](Q,p)$ is a
trivial fibration; and now since also $X$ is cofibrant it follows that
$[X,[\cd,\cv](Q,p)]$ is also a trivial fibration.
\endproof

Many of the \cv-categories to which we shall apply our adjoint
functor theorem have all cofibrantly-weighted limits, but the
hypotheses of the theorem will be weaker than this. There are several
reasons for this, and in particular for not requiring idempotents to
split. For instance, the weak adjoint functor theorem of Kainen
\cite{Kainen1971Weak} which we wish to generalise does not assume
them.  Furthermore, in $2$-category theory there exist many natural
examples of $2$-categories admitting enough (but not all)
cofibrantly-weighted limits.  An example is the $2$-category of
\emph{strict }monoidal categories and \emph{strong} monoidal
functors: see \cite[Section~6.2]{Bourke2019Accessible}.
We start with the case where $I=1$. 

\subsection{Enough cofibrantly-weighted limits (when $I=1$)}\label{sect:pseudolimits}

We now define our key completeness and continuity conditions in
the case $I=1$; these will be modified later to deal with the case
where $I\neq 1$.

\begin{definition}
  Suppose that the unit object $I$ of \cv is terminal.
  We say that a \cv-category \cb has {\em enough cofibrantly-weighted limits} if, for any
  small \cv-category \cd, there is a chosen cofibrant weight $Q\colon
  \cd\to\cv$ for which the unique map $Q\to 1$ is a trivial fibration
  and for which \cb has $Q$-weighted limits. Similarly, a \cv-functor
  $U\colon\cb\to\ca$ preserves enough cofibrantly-weighted limits
  if, for each small $\cd$, there is such a $Q$ for which $U$
  preserves $Q$-weighted limits.
\end{definition}

In particular, \cb will have enough cofibrantly-weighted limits if it has all
cofibrantly-weighted limits.

\begin{remark} 
In our definition of ``enough cofibrantly-weighted limits'', we have asked that there be a
chosen fixed weight $Q\colon\cd\to\cv$ for each small \cd. But in
fact it is possible to allow $Q$ to depend upon the particular diagram
$\cd\to\cb$ of which one wishes to form the (weighted) limit. The cost
of doing this is that it becomes more complicated to express what it
means to preserve enough cofibrantly-weighted limits, but our main results can still be
proved with essentially unchanged proofs.
\end{remark}

\begin{example}
  If $\cv$ has the trivial model structure, then the trivial
  fibrations in $[\cd,\cv]$ are just the isomorphisms, and all objects
  are cofibrant. Thus in this case a \cv-category has enough
  cofibrantly-weighted limits if and only if it has all (unweighted)
  limits of $\cv$-functors (with small domain).
\end{example}

\begin{example}\label{ex:split-enough}
  If $\cv$ has the split model structure, then a \cv-category will
  have enough cofibrantly-weighted limits provided it has products.
  To see this, let \cd be a small \cv-category and consider the
  terminal weight $1\colon\cd\to\cv$.  Then
  $\sum_{D\in\cd}\cd(D,-)$ is a coproduct of representables, and so is
  cofibrant. The unique map $q\colon\sum_{D\in\cd}\cd(D,-)\to 1$ has
  component at $C\in\cd$ given by $\sum_{D\in\cd}\cd(D,C)\to 1$, which
  has a section picking out the identity $1\to \cd(C,C)$. Thus $q$ is
  a pointwise split epimorphism, and so a trivial fibration in
  $[\cd,\cv]$. And the $\sum_D\cd(D,-)$-weighted limit of
  $S\colon\cd\to \cb$ is just $\prod_{D\in\cd}SD$.
\end{example}

\begin{example}\label{ex:enough-Cat}
  In the case where $\cv=\Cat$, the enriched projective model
  structure on $[\cd,\Cat]$ exists and the cofibrant weights are
  precisely the flexible ones in the sense of \cite{Bird1989Flexible}:
  see Theorem~5.5 and Section~6 of \cite{Lack2007Homotopy-theoretic}.
  Thus a 2-category will have all cofibrantly-weighted limits just
  when it has all flexible limits. But it will have enough
  cofibrantly-weighted limits provided that it has all
  pseudolimits, and so in particular if it has products, inserters,
  and equifiers; in other words, if it has PIE limits in the sense of
  \cite{PIE}. Once again, the 2-category of strict monoidal categories
  and strong monoidal functors is an example which has PIE limits but
  not flexible ones: see \cite[Section~6.2]{Bourke2019Accessible}.
\end{example}

\begin{example}\label{ex:enough-SSet}
  In the case of $\SSet$, the enriched projective model structure on
  $[\cd,\SSet]$ exists (see Proposition~A.3.3.2 and Remark~A.3.3.4 of
  \cite{Lurie}) and has generating cofibrations
  $$\ci = \{\partial \Delta^n\cdot\cd(X,-) \to \Delta^n\cdot
  \cd(X,-)\colon n \in \mathbb N, X \in \cd\}$$ obtained by copowering
  the boundary inclusions of simplices by representables.  The
  $\ci$-cellular weights are, in this context, what Riehl and Verity
  call \emph{flexible weights} --- in particular, these are certain
  cofibrant weights.  Since Quillen's small object argument applied to
  the set $\ci$ produces a cellular cofibrant replacement of each
  weight, a simplicially enriched category will have enough
  cofibrantly-weighted limits provided that it has all flexible limits
  --- that is, those weighted by flexible weights.  In particular,
  each $\infty$-cosmos in the sense of \cite{Riehl2019Elements} admits
  enough cofibrantly-weighted limits.
\end{example}

\begin{lemma}\label{lem:biterminal}
  Let $Q\colon\cd\to\cv$ be a cofibrant weight for which the unique
  map $Q\to 1$ is a trivial fibration. Then $Q$ is bi-terminal in the
  full subcategory of $[\cd,\cv]$ consisting of the cofibrant weights.
\end{lemma}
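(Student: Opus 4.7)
The plan is to unpack what bi-terminality means in the $\cv$-enriched setting of cofibrant weights and then to reduce the claim to the defining lifting property of trivial fibrations in $[\cd,\cv]$.

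First I would recall that in the relevant regime we have $I=1$, so the terminal weight is representable by the monoidal unit and $[\cd,\cv](P,1) \cong I$ for every weight $P$. Unpacking the definition of bi-terminal, I need to show that for every cofibrant weight $P \colon \cd \to \cv$ there exists a $\cv$-natural transformation $P \to Q$, and that any two such transformations are $\cv$-homotopic in $[\cd,\cv]$.

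The key step is to observe that since $P$ is cofibrant and $Q \to 1$ is a trivial fibration in $[\cd,\cv]$ by hypothesis, the induced morphism
\[ [\cd,\cv](P,Q) \to [\cd,\cv](P,1) \cong I \]
is a trivial fibration in $\cv$. For existence of a map $P \to Q$, I would produce a section $I \to [\cd,\cv](P,Q)$ of this trivial fibration, which exists because $I$ is assumed cofibrant in $\cv$ (so $\emptyset \to I$ lifts against any trivial fibration). Transposing gives the required morphism $P \to Q$.

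For uniqueness up to $\cv$-homotopy, given two morphisms $f,g \colon P \to Q$, their transposes $f,g \colon I \to [\cd,\cv](P,Q)$ both lie over the same map $1_I \colon I \to I$ via the trivial fibration. Choosing any interval $(J,d,c,e)$, the pair $(f,g)$ defines a map $I+I \to [\cd,\cv](P,Q)$ whose composite with the trivial fibration to $I$ factors through $e\colon J \to I$; since $(d,c)\colon I+I \to J$ is a cofibration, a lift exists, and this lift is precisely a $\cv$-homotopy $h\colon f \cong_J g$ in $[\cd,\cv]$. I do not expect a real obstacle here: everything follows from the lifting property of a trivial fibration against cofibrations, once the identification $[\cd,\cv](P,1) \cong I$ coming from $I=1$ is in hand.
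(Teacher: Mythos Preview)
Your proposal is correct and follows essentially the same approach as the paper: both use that cofibrancy of $P$ makes $[\cd,\cv](P,Q)\to[\cd,\cv](P,1)\cong 1=I$ a trivial fibration in $\cv$, and then obtain existence and homotopy-uniqueness by lifting the cofibrations $\varnothing\to I$ and $(d~c)\colon I+I\to J$ against it. The paper presents these two lifting problems as diagrams rather than talking about sections and transposes, but the content is identical.
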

\begin{proof}
  Let $G$ be cofibrant. Then the unique map from $[\cd,\cv](G,Q)$ to
  $[\cd,\cv](G,1)$ is a trivial fibration. Since the left vertical in
  each of the following diagrams is a cofibration in \cv
  \[\xymatrix{
      \varnothing \ar[d] \ar[rr]^-{} && [\cd,\cv](G,Q) \ar[d] \\
      I \ar@{.>}[urr] \ar[rr] && 1 = [\cd,\cv](G,1) }
    \hspace{1cm} \xymatrix{
      I + I \ar[d] \ar[rr]^-{} && [\cd,\cv](G,Q) \ar[d] \\
      J \ar@{.>}[urr] \ar[rr] && 1= [\cd,\cv](G,1) }
  \]
  it follows that each diagram has a filler. This implies the
  existence and essential uniqueness of maps $G\to Q$.
\end{proof}

Consider a diagram $J\colon \cd \to \cb$, and let $L=\{Q,J\}$.
Now $Q$ is bi-terminal with respect to cofibrant objects by
Lemma~\ref{lem:biterminal}, and each representable is cofibrant by
Example~\ref{ex:representable}, so that there exists a
morphism $s_D\colon \cd(D,-) \to Q$.  Since weighted limits are
(contravariantly) functorial in their weights, we obtain a morphism
\[p_D\colon L = \{Q,J\} \to \{\cd(D,-),J\} = JD.\] By bi-terminality
of $Q$ once again, for any $f \colon C \to D$ in $\cd$ there exists a
$\cv$-homotopy in the triangle below left
$$\xy
(0,0)*+{\cd(C,-)}="a0"; (-15,-15)*+{\cd(D,-)}="b0";(15,-15)*+{Q}="c0";
{\ar^{\cd(f,-)} "b0"; "a0"}; {\ar^{s_{C}} "a0"; "c0"}; {\ar_{s_{D}}
  "b0"; "c0"}; (1,-9)*{{\cong}_{}}; \endxy \hspace{2cm} \xy
(-30,0)*+{L}="c0";(0,0)*+{JC}="a0"; (-15,-15)*+{JD}="b0"; {\ar^{Jf}
  "a0"; "b0"}; {\ar^{p_{C}} "c0"; "a0"}; {\ar_{p_{D}} "c0"; "b0"};
(-15,-8)*{{\cong}_{}}; \endxy$$ and so, on taking limits of $J$ by the
given weights, a $\cv$-homotopy in the triangle on the right.

\begin{lemma}\label{lemma:lukas}
  If $J\colon\cd\to\cb$ is fully faithful, and $L=\{Q,J\}$ as above,
  then the projection $p_D\colon L\to JD$ satisfies
  $f\circ p_D\cong 1_L$ for any morphism $f\colon JD\to L$.
\end{lemma}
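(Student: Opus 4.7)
The plan is to exploit the universal property of $L = \{Q, J\}$ to translate the desired $\cv$-homotopy $f \circ p_D \cong 1_L$ in $\cb(L, L)$ into a statement about $\cv$-natural transformations $Q \to Q$, which will then follow from the bi-terminality of $Q$ established in Lemma~\ref{lem:biterminal}.

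First I would invoke the $\cv$-isomorphism $\cb(L, L) \cong [\cd, \cv](Q, \cb(L, J-))$ coming from the universal cone $\lambda\colon Q \to \cb(L, J-)$.  Under this iso the identity $1_L$ corresponds to $\lambda$ itself, so the problem reduces to showing that the $\cv$-natural transformation $\widetilde{f \circ p_D}\colon Q \to \cb(L, J-)$ classifying $f \circ p_D$ is $\cv$-homotopic to $\lambda$.  On the $\cd$-side, since $J$ is fully faithful the canonical map $\cd(D, -) \to \cb(JD, J-)$ is a $\cv$-natural iso, so the morphism $f\colon JD \to L$ corresponds via the universal property of $L$ to a unique $\cv$-natural transformation $\gamma\colon Q \to \cd(D, -)$.

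Next I would identify $\widetilde{f \circ p_D}$ with the composite $\lambda \circ s_D \circ \gamma$, namely
$Q \xrightarrow{\gamma} \cd(D, -) \xrightarrow{s_D} Q \xrightarrow{\lambda} \cb(L, J-)$.
Using the fact that $p_D = \{s_D, J\}$ is classified by $\lambda \circ s_D\colon \cd(D, -) \to \cb(L, J-)$, together with the $\cv$-naturality of $\lambda$ with respect to the arrows $\gamma_C(q)\colon D \to C$ and the $\cv$-naturality of $s_D$, a short diagram chase shows that $\lambda_C(q) \circ f \circ p_D = \lambda_C\bigl((s_D \circ \gamma)_C(q)\bigr)$ for every $C$ and $q$, yielding the claimed identification.

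Finally, since $Q$ is cofibrant and bi-terminal in the full subcategory of cofibrant weights by Lemma~\ref{lem:biterminal}, any $\cv$-natural endomorphism of $Q$ is $\cv$-homotopic to $1_Q$; in particular $s_D \circ \gamma \cong 1_Q$.  Postcomposition with $\lambda$ is a morphism in $\cv$ and so preserves $\cv$-homotopy, giving $\widetilde{f \circ p_D} = \lambda \circ s_D \circ \gamma \cong \lambda \circ 1_Q = \lambda$; transporting this homotopy back across the iso $\cb(L, L) \cong [\cd, \cv](Q, \cb(L, J-))$ then yields $f \circ p_D \cong 1_L$.  The main bookkeeping obstacle is the explicit identification of $\widetilde{f \circ p_D}$ with $\lambda \circ s_D \circ \gamma$; once this is in place the conclusion is essentially immediate from bi-terminality.
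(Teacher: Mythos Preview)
Your proof is correct and follows essentially the same approach as the paper: your $\gamma$ is the paper's $k$, your $\lambda$ is the paper's $\eta$, and your identification $\widetilde{f\circ p_D}=\lambda\circ s_D\circ\gamma$ is exactly the commutativity of the paper's two stacked squares. The only cosmetic difference is the final transport of the homotopy $s_D\circ\gamma\cong 1_Q$: you push it forward along $\lambda_*$ and the representing isomorphism $[\cd,\cv](Q,\cb(L,J-))\cong\cb(L,L)$, whereas the paper packages the same step as applying the weighted-limit \cv-functor $\{-,J\}$, which sends $s_D\circ k\cong 1_Q$ directly to $\{k,J\}\circ\{s_D,J\}=f\circ p_D\cong 1_L$.
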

\begin{proof}
  Using fully faithfulness of $J$ we define $k\colon Q\to \cd(D,-)$ as
  the unique morphism rendering commutative the upper square below
  \[ \xymatrix{
      Q \ar[r]^-{\eta} \ar[d]_k & \cb(L,J-) \ar[d]^{\cb(f,J-)} \\
      \cd(D,-) \ar[r]^-J_-{\cong} \ar[d]_{s_D} & \cb(JD,J-) \ar[d]^{\cb(p_D,J-)} \\
      Q \ar[r]_-{\eta} & \cb(L,J-) } \] in which $\eta$ is the unit of
  the limit $L=\{Q,J\}$, and the lower commutes essentially by
  definition of $p_D$. Bi-terminality of $Q$ among cofibrant objects
  in $[\cd,\cv]$ implies that the composite $s_D\circ k$ is
  $\cv$-homotopic to the identity.

  Consider the weighted limit \cv-functor
  $\{-,J\}\colon {[\cd,\cv]\op_\cb}\to \cb$, where the domain is
  restricted to the full subcategory containing those weights $W$ for
  which $\cb$ admits all $W$-weighted limits.  Since $[\cd,\cv]_\cb$
  contains $Q$ and $\cd(D,-)$ we use the fact that \cv-functors
  preserve $\cv$-homotopies to deduce that
  $\{k,J\}\circ \{s_D,J\}\cong 1$; in other words that
  $f\circ p_D\cong 1$.
\end{proof}

\begin{example}\label{ex:isoinserter} 
Let \Eq be the free parallel pair, involving maps
$f,g\colon A\to B$, made into a free \cv-category. Consider the
constant functor $\Delta I\colon \Eq\to\cv$, which is equally the terminal weight.
We first show how a
(standard) interval gives a cofibrant replacement of $\Delta I$ and
interpret the corresponding limit.  Then we relate this special type
of cofibrant replacement to a general one.
	
Given a standard interval --- that is, an interval 
\[ \xymatrix @R2pc @C2pc {
    I \ar@<1ex>[r]^d \ar@<-1ex>[r]_c & J \ar[r]^e & I
  } \]
 in which $e$ is a trivial fibration, one can show that the pair $d$, $c$ constitutes a cofibrant weight $Q \colon \Eq \to \cv$ and admits a pointwise trivial fibration
\[  \xymatrix @R2pc @C2pc {
    I \ar@<-1ex>[r]_-c \ar@<1ex>[r]^-d \ar[d]_-1 &  J \ar[d]^e\\
    I \ar@<-1ex>[r]_-1 \ar@<1ex>[r]^-1 &  I
  } \]
to the constant functor $\Delta I$. A $Q$-cone from an object
$X$ to a diagram $f ,\, g \colon A \rightrightarrows B$
amounts to a natural transformation
	\[ \xymatrix @R2pc @C2pc {
      I \ar@<-1ex>[d]_-d \ar@<1ex>[d]^-c \ar[r]^-k & \cc(X, A) \ar@<-1ex>[d]_-{f_*} \ar@<1ex>[d]^-{g_*} \\
      J \ar[r]_-h & \cc(X, B)
  } \]	
or equivalently a morphism $k \colon X \to A$ 
equipped with a homotopy $h \colon fk \cong gk$.
Thus, a limit $\{Q, F\}$ possesses a universal such pair and is thus
what one might call a {\em $Q$-isoinserter}, or
{\em isoinserter} if $Q$ is understood. 

Next, suppose that $Q'\colon \Eq\to\cv$ is cofibrant and
$q'\colon Q'\to \Delta I$ is a pointwise trivial fibration, as in
\[  \xymatrix @R2pc @C2pc {
    I' \ar@<1ex>[r]^{d'} \ar@<-1ex>[r]_{c'} \ar[d]_{a} & J' \ar[d]^b \\
    I \ar@<1ex>[r]^1 \ar@<-1ex>[r]_1 & I 
  } \]
and let  $u$ be a section of the trivial fibration $a$. Factorise the
induced map $(d'u~c'u)$ as a cofibration followed by a trivial
fibration as in
\[ \xymatrix @R2pc @C2pc {
    I+I \ar[r]^{u+u} \ar[d]_{(d~c)} & I'+I' \ar[d]^{(d'~c')} \\
    J \ar[r]^{v}  & J' 
  } \]
to give an interval
\[ \xymatrix @R2pc @C2pc {
I \ar@<1ex>[r]^{d} \ar@<-1ex>[r]_{c} & J \ar[r]^{bv} & I.
} \]
together with a natural transformation
\[  \xymatrix @R2pc @C2pc {
    I \ar@<1ex>[r]^{d} \ar@<-1ex>[r]_{c} \ar[d]_{u} & J \ar[d]^v \\
    I' \ar@<1ex>[r]^{d'} \ar@<-1ex>[r]_{c'} & J'. 
  } \]
Thus, any $Q'$-cone gives a $Q$-cone and, in particular, a morphism $k \colon X \to A$ and a homotopy $h \colon fk \cong gk$.
We call the limit $\{Q',F\}$, the {\em $Q'$-isoinserter} of $f$ and $g$, or
just the {\em isoinserter} if $Q'$ is understood.
\end{example}

\subsection{Enough cofibrantly-weighted limits ($I\neq 1$)}

For $\ci$ the unit $\cv$-category we write $I\colon \ci \to \cv$ for the $\cv$-functor selecting the object $I$.

\begin{definition}\label{def:enoughGeneral}
  We say that a $\cv$-category \cb has {\em enough cofibrantly-weighted limits} if, for each small \cv-category \cd and each
  \cv-functor $P\colon \cd\to\ci$, there is a cofibrant \cv-weight
  $Q\colon \cd \to\cv$ and pointwise trivial fibration $Q\to IP$ for
  which \cb has $Q$-weighted limits. Similarly, $U\colon \cb\to \ca$
  preserves enough cofibrantly-weighted limits if, for each small
  $\cd$ and each $P$, there is such a $Q$ for which $U$ preserves $Q$-limits.
\end{definition}

In the case where $I=1$, the $\cv$-category $\ci$ is terminal,
and so any $\cv$-category $\cd$ has a unique such $P$; furthermore,
the composite $IP$ is then the terminal object of $[\cd,\cv]$, and
so this does agree with our earlier definition. 

\begin{example}
  For the trivial model structure on $\cv$, no longer assuming that
  $I$ is terminal, a $\cv$-category $\ck$ will have enough cofibrantly-weighted
  limits if and only if it has all limits weighted by functors of the
  form $IP\colon\cd\to\cv$ for a small $\cv$-category $\cd$, a
  $\cv$-functor $P\colon\cd\to\ci$, and $I\colon\ci\to\cv$ the weight
  picking out the object $I\in\cv$.
\end{example}

\begin{example}
 For the split model structure on $\cv$, no longer assuming
 that $I$ is terminal, each weight $W \in [\cd,\cv]$ admits a
 canonical cofibrant replacement $ q\colon W^{\prime} \to W$ given by the
 evaluation map
 \begin{equation*}
   \xymatrix{
     {}\sum_{D \in \cd}WD\cdot\cd(D,-) \ar[r]^-q & W
   }
 \end{equation*}
 To see this, observe that the left hand side is a coproduct of
 copowers of representables, and so cofibrant by Proposition~\ref{prop:coprodscopowers}.  Furthermore, each
 component of $q$ is a split epimorphism with section
 \[ WA \cong WA\cdot I \to WA\cdot\cd(A,A) \to \Sigma_{D \in
     \cd}WA\cdot\cd(D,A) \] obtained by first copowering the map
 $I \to \cd(A,A)$ defining the identity morphism on $A$, and then
composing this with the coproduct inclusion.  Thus $q$ is a
pointwise split epimorphism, and so a trivial fibration in
$[\cd,\cv]$.  In the case that $W = IP$,  as in Definition~\ref{def:enoughGeneral},
each $WA$ is just $I$.   Thus the right hand side above reduces to
$\sum_{D \in \cd}\cd(D,-)$, 
whence a $\cv$-category has enough cofibrantly-weighted limits provided it
has products.
\end{example}

\section{The adjoint functor theorem in the case
  $I=1$}\label{sect:I=1}

The key technical step in the proof of Freyd's general adjoint functor theorem is to prove that
a complete category with a weakly initial set of objects has an
initial object.
We shall now show that, on replacing completeness by homotopical completeness, we can still construct a bi-initial object.
It is then straightforward to deduce the adjoint functor theorem.

\begin{theorem}\label{thm:biinitial}
  Suppose that the unit object $I$ of $\cv$ is terminal.  
  If \cb is a \cv-category with enough cofibrantly-weighted limits,
  and $\cb_0$ has a weakly initial set of objects, then \cb has a bi-initial object. 
\end{theorem}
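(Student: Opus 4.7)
The plan is to mimic Freyd's classical argument, where one builds a weakly initial object by taking the product of a weakly initial set, and then equalises its endomorphisms to upgrade weak to genuine initiality. Here the analogues will be a cofibrantly-weighted limit and the isoinserter construction of Example~\ref{ex:isoinserter}.

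Let $(B_i)_{i \in S}$ be a small weakly initial family in $\cb_0$, and let $J\colon \cd \hookrightarrow \cb$ be the inclusion of the full $\cv$-subcategory on $\{B_i\}_{i \in S}$, which is fully faithful. Since $\cd$ is small, the completeness hypothesis furnishes a cofibrant weight $Q\colon \cd \to \cv$ with $Q \to 1$ a pointwise trivial fibration, together with the limit $L := \{Q,J\}$ in $\cb$. Each representable $\cd(B_i,-)$ is cofibrant (Example~\ref{ex:representable}), so Lemma~\ref{lem:biterminal} supplies a morphism $s_i\colon \cd(B_i,-) \to Q$, yielding a projection $p_i\colon L \to B_i$. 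By Lemma~\ref{lemma:lukas} this projection satisfies the key property $f \circ p_i \cong 1_L$ for every $f\colon B_i \to L$. Weak initiality of $L$ is then immediate: given $B \in \cb$, weak initiality of the family $(B_i)$ supplies some $B_i \to B$, and composing with $p_i$ gives a morphism $L \to B$.

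To promote weak initiality to bi-initiality, suppose given two morphisms $f,g\colon L \to B$. Forming the isoinserter of $f$ and $g$ via Example~\ref{ex:isoinserter} (which exists because $\Eq$ is small and $\cb$ has enough cofibrantly-weighted limits) produces $k\colon E \to L$ together with a $\cv$-homotopy $h\colon fk \cong gk$. Weak initiality furnishes $r\colon B_i \to E$ for some $i \in S$, whence Lemma~\ref{lemma:lukas} applied to $kr\colon B_i \to L$ yields $krp_i \cong 1_L$. Pre- and post-composing the resulting homotopies gives the chain
\[ f \;\cong\; fkrp_i \;\cong\; gkrp_i \;\cong\; g, \]
in which the outer equivalences use $krp_i \cong 1_L$ and the middle uses $fk \cong gk$; by transitivity of $\cong$ in $\hh_*(\cb)$, this yields $f \cong g$.

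The main technical point to watch is that two separate cofibrant replacements are invoked --- one for $Q$ defining $L$, another for the terminal weight on $\Eq$ used to build the isoinserter --- and each link of the homotopy chain must be obtained from the pre- and post-composition rules for $(\cv,J)$-homotopies recorded just after the definition of $\cong$. Once these routine checks are made, transitivity in the $\ER$-category $\hh_*(\cb)$ closes the argument.
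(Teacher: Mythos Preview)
Your proposal is correct and follows essentially the same approach as the paper: form $L=\{Q,J\}$ for the full subcategory on the weakly initial family, use the projections for existence, and for essential uniqueness apply the isoinserter of Example~\ref{ex:isoinserter} together with Lemma~\ref{lemma:lukas} to obtain the chain $f\cong fkrp_i\cong gkrp_i\cong g$. The only cosmetic difference is that the paper phrases the last step as ``it suffices to show $k\circ u\circ p_C\cong 1$'' before invoking Lemma~\ref{lemma:lukas}, whereas you write out the full chain explicitly.
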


\proof
Let \cg be the full subcategory of \cb consisting of the objects
appearing in the weakly initial set.
We write $J\colon \cg\to\cb$ for the inclusion of this small full
subcategory.
By assumption, \cb admits $Q$-weighted limits for some cofibrant
weight $Q\colon \cg\to\cv$ for which the unique map $Q\to 1$ is a trivial fibration.
We shall show that the limit $L=\{Q,J\}$ is bi-initial in $\cb$.

Following the notation of Section~\ref{sect:pseudolimits}, we obtain morphisms $p_C\colon L \to JC$ for each $C \in \cg$, such that for $f\colon C \to D$ the triangle

$$\xy
(-30,0)*+{L}="c0";(0,0)*+{JC}="a0"; (-15,-15)*+{JD}="b0";
{\ar^{Jf} "a0"; "b0"}; 
{\ar^{p_{C}} "c0"; "a0"}; 
{\ar_{p_{D}} "c0"; "b0"}; 
(-15,-8)*{{\cong}_{}};
\endxy$$
commutes up to $\cv$-homotopy.

Let $B \in \cb$.  Since \cg is weakly initial,  there exists a
$C\in\cg$ and a morphism $f\colon JC\to B$, and now $f\circ p_C\colon
L\to B$ gives the existence part of the required property of $L$.

Suppose now that $B\in\cb$ and that $f,g\colon L\to B$ are two
morphisms.
As in Example~\ref{ex:isoinserter}, we may form the isoinserter
$k\colon K\to L$ of $f$ and $g$, and $fk\cong gk$. 
By weak initiality of $\cg$ there is a morphism $u\colon JC\to K$ for
some $C\in\cg$. Now
\[ f\circ k\circ u\circ p_C \cong g\circ k\circ u\circ p_C \]
and so it will suffice to show that $k\circ u\circ p_C\cong 1$.
Lemma~\ref{lemma:lukas} gives $v\circ p_C\cong 1$ for any $v\colon JC\to L$ and, in particular, for
$v=k\circ u$. 
\endproof

\begin{theorem}\label{thm:adjunction1}
Suppose that the unit object $I$ of \cv is terminal.
 Let \cb be a \cv-category with powers and enough cofibrantly-weighted limits, and let
  $U\colon\cb\to\ca$ be a \cv-functor that preserves them. Then $U$
  has a left shrink-adjoint if and only if $U_0$ satisfies the solution
  set condition.
\end{theorem}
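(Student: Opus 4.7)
The plan is to mirror the classical proof of Freyd's general adjoint functor theorem, with each classical step replaced by the homotopical analogue established in the preceding sections.

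For the easy direction, suppose $U$ admits an $\ce$-weak left adjoint with unit components $\eta_A\colon A \to UA'$. Each canonical map $\cb(A',B) \to \ca(A,UB)$ is then shrinkable and so, in particular, split epic; applying the section to an arbitrary $f\colon A \to UB$, viewed as a map $I \to \ca(A,UB)$, produces $g\colon A' \to B$ with $Ug \circ \eta_A = f$. Thus $\{\eta_A\}$ is already a one-element solution set at $A$, so $U_0$ satisfies the solution set condition.

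For the hard direction, fix $A \in \ca$ and regard it as a $\cv$-functor $A\colon \ci \to \ca$. Since $I=1$, the unit $\cv$-category $\ci$ is terminal (Remark~\ref{remark:problems}) and hence trivially admits all cofibrantly-weighted limits; combined with the hypothesis that $\cb$ has enough such limits and $U$ preserves them, Proposition~\ref{prop:completeness} shows that the comma $\cv$-category $A/U$ inherits enough cofibrantly-weighted limits. The solution set condition at $A$ unpacks precisely as a small weakly initial family in $(A/U)_0$, so Theorem~\ref{thm:biinitial} applied to $A/U$ yields a bi-initial object $\eta_A\colon A \to UA'$.

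To conclude, Theorem~\ref{theorem:biterm} --- whose hypotheses are satisfied since $\cb$ has powers preserved by $U$ --- shows the resulting canonical map $\cb(A', B) \to \ca(A, UB)$ is bi-terminal in $\cv/\ca(A,UB)$, and Proposition~\ref{prop:dsdrBiterminal} identifies bi-terminal objects of this slice with shrinkable morphisms. Hence $\eta_A$ is an $\ce$-weak reflection, and $U$ admits an $\ce$-weak left adjoint. The main technical burden has already been absorbed into Theorems~\ref{thm:biinitial} and \ref{theorem:biterm}; in the present argument the only delicate point is the construction of $A/U$ with enough cofibrantly-weighted limits, which is precisely where the assumption $I=1$ is essential (without it, even the unit $\cv$-category $\ci$ need not be complete, see Remark~\ref{remark:problems}). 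That issue will be circumvented in Section~\ref{sect:general} by passing to $\cv/I$-enrichment.
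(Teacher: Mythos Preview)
Your proof is correct and follows essentially the same route as the paper's: both directions use the same key ingredients (Proposition~\ref{prop:completeness}, Theorem~\ref{thm:biinitial}, Theorem~\ref{theorem:biterm}, Proposition~\ref{prop:dsdrBiterminal}) in the same order. The only minor point is that when you assert ``the solution set condition at $A$ unpacks precisely as a small weakly initial family in $(A/U)_0$'', you are implicitly using the identification $(A/U)_0 \cong A/U_0$ from Proposition~\ref{prop:ER_homotopy_category}, which the paper cites explicitly; making that citation explicit would tighten the argument.
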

\begin{proof}
Suppose first that $U$ has a left shrink-adjoint, and
$\eta_A\colon A\to UA'$ is a shrink-reflection.   Then the singleton
family consisting of $(A',\eta)$ is a solution set for $U_0$. This proves the ``only if'' direction.

Since $I$ is terminal the unit $\cv$-category $\ci$ is terminal in
$\cv\cat$ and hence complete as a $\cv$-category.  Then for any $A\in\ca$,
it follows from Proposition~\ref{prop:completeness} that the comma
category $A/U$ has any type of limit which $\cb$ has and
$U\colon\cb\to\ca$ preserves, and so has enough cofibrantly-weighted limits.   
By Proposition~\ref{prop:ER_homotopy_category} we know that $(A/U)_0\cong A/U_0$, which has a weakly initial set of objects since $U_0$ satisfies the solution set condition. Hence,  by Theorem~\ref{thm:biinitial}, $A/U$
has a bi-initial object $\eta\colon A \to UA'$.  Since $\cb$ has
powers and $U$ preserves them, it follows from
Theorem~\ref{theorem:biterm} that 
 \begin{equation*}
  \xymatrix{
 \cb(A',B) \ar[r]^-{U} & \ca(UA',UB) \ar[r]^-{\ca(\eta,UB)} &
      \ca(A,UB)}
  \end{equation*}
  is bi-terminal in $\cv/ \ca(A,UB)$ --- in other words, by Proposition~\ref{prop:dsdrBiterminal}, it is \dsdr.
\end{proof}

\section{$\cv/I$-categories and the adjoint functor theorem in the case
  $I\neq1$}\label{sect:Ineq1}\label{sect:general}

The proof given in the previous section relied on the fact that the
enriched comma categories $A/U$ had enough cofibrantly-weighted
limits,
which we were able to prove only in the case where $I=1$, since only then could we be
sure that the unit \cv-category \ci had enough cofibrantly-weighted
limits.
We shall overcome this problem by viewing $A/U$ as a category enriched
in the monoidal category $\cv/I$, whose unit \emph{is} terminal.
To this end, observe that, since the unit $I$ is (trivially)  a commutative monoid in \cv, the
slice category {$\cv/I$} becomes a symmetric monoidal category, with
unit $1\colon I\to I$ and with the tensor product of
$(X,x\colon X\to I)$ and $(Y,y\colon Y\to I)$ given by
$(X\ox Y, x\ox y\colon X\ox Y\to I)$. In particular, the unit is indeed
terminal in {$\cv/I$}. The resulting symmetric monoidal category
$\cv/I$ is also closed, with the internal hom of $(Y,y)$ and $(Z,z)$
given by the left vertical in the pullback
\begin{equation}\label{eq:inthom}
 \xymatrix @R2pc @C2pc {
    {}\<(Y,y),(Z,z)\> \ar[rr] \ar[d] &&     [Y,Z] \ar[d]^{[Y,z]} \\
    I \ar[r]_j & [I,I] \ar[r]_{[y,I]} & [Y,I] }
    \end{equation}
 The category $\cv/I$ is once again complete and cocomplete. It becomes a
monoidal model category if we define a morphism to be a cofibration,
weak equivalence, or fibration just when its underlying morphism in
$\cv$ is one.

A category enriched in $\cv/I$ involves a collection of objects together with, for each pair $X,Y$ of objects, an object
$$
C_{X,Y}\colon\cc(X,Y) \to I
$$
of $\cv/\ci$.  Translating through the remaining structure, this is
equally to specify a $\cv$-category $\cc$ together with a functor
$C\colon\cc \to \ci$ to the unit $\cv$-category.  
We sometimes write $\en{C} = (\cc,C)$ for such a $\cv/I$-enriched category, and we refer to the functor $C$ as the {\em augmentation}. 

\begin{proposition}
This correspondence extends to an isomorphism of categories $\cv/I\cat\cong\cv\cat/\ci$.
\end{proposition}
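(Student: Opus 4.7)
The plan is to extend the object-level correspondence sketched before the statement into functors in both directions and verify they are mutually inverse. First I would construct $\Phi\colon \cv/I\cat\to\cv\cat/\ci$ on objects exactly as described: given a $\cv/I$-category $\en{C}$ with augmentations $C_{X,Y}\colon\cc(X,Y)\to I$, forget them to obtain an underlying $\cv$-category $\cc$ with the same objects and hom-objects, where the $\cv/I$-composition and identities supply the $\cv$-composition and identities. The augmentations $C_{X,Y}$ then assemble into a $\cv$-functor $C\colon\cc\to\ci$, because the very fact that $\cv/I$-composition is a $\cv/I$-morphism says precisely $C_{X,Z}\circ\mu = C_{Y,Z}\ox C_{X,Y}$ (under $I\ox I\cong I$), which is $\cv$-functoriality of $C$ on composition, and similarly for identities.

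On morphisms, a $\cv/I$-functor $F\colon\en{C}\to\en{D}$ consists of an object assignment together with hom-morphisms $F_{X,Y}\colon\cc(X,Y)\to\cd(FX,FY)$ in $\cv/I$. By the slice structure, such a morphism is exactly a $\cv$-morphism satisfying $D_{FX,FY}\circ F_{X,Y}=C_{X,Y}$, which is precisely the assertion that the underlying $\cv$-functor $F\colon\cc\to\cd$ commutes with augmentations, i.e.\ satisfies $DF=C$, giving a morphism of $\cv\cat/\ci$. Composition and identities of $\cv/I$-functors coincide with those of the underlying $\cv$-functors, so $\Phi$ is a functor.

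The inverse functor $\Psi\colon\cv\cat/\ci\to\cv/I\cat$ sends $(\cc,C)$ to the $\cv/I$-category whose augmentations $C_{X,Y}\colon\cc(X,Y)\to I$ are obtained by the action of $C$ on homs (using $\ci(*,*)=I$), keeping the underlying composition and identities; $\cv$-functoriality of $C$ is exactly the statement that $\mu$ and the identities lie in $\cv/I$. A $\cv$-functor $F$ over $\ci$ has each hom-component automatically commuting with augmentations, so yields a $\cv/I$-functor. The composites $\Psi\Phi$ and $\Phi\Psi$ are visibly identities, since neither construction discards or adds data.

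The only subtle point, and so the main thing to get right, is the translation between the $\cv/I$-enriched data (composition, units, and functorial action on homs lying in $\cv/I$) and the conditions ``$C$ is a $\cv$-functor'' and ``$F$ lies over $\ci$''. These translations are entirely mechanical once one notes that morphisms in the slice $\cv/I$ are $\cv$-morphisms commuting with augmentations and that $\ci(*,*)=I$, so that a $\cv$-functor into $\ci$ on hom-objects is literally a choice of augmentation; thereafter the associativity and unit axioms for $\cv/I$-composition and the functoriality axioms for $\cv/I$-functors match their $\cv$-enriched counterparts on both sides without extra effort.
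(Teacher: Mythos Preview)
Your proposal is correct and follows essentially the same approach as the paper: the key point in both is that a $\cv/I$-functor amounts precisely to a $\cv$-functor whose action on homs commutes with the augmentations into $I$, i.e.\ a morphism over $\ci$. The paper's proof is simply a terse one-line version of your more detailed construction of $\Phi$ on morphisms, omitting the explicit inverse $\Psi$ and the verification that the composites are identities since these are evident once the morphism-level correspondence is established.
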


\proof
A $\cv/I$-functor $F\colon\en{\cc}\to\en{\cd}$  consists of a $\cv$-functor
$F\colon\cc\to\cd$ for which the action $F_{C,D}\colon \cc(C,D)\to\cd(FC,FD)$ on
homs commutes with the maps into $I$; equivalently, such that $F$
commutes with the augmentations.\endproof

\begin{example}\label{ex:comma}
The augmentations $P\colon A/U \to \ci$ and $Q\colon\ca/A \to \ci$ equip the $\cv$-categories $A/U$ and $\ca/A$ with the structure of $\cv/I$-categories.
\end{example}

The next four results extend
Propositions~\ref{prop:terminalslice},
\ref{prop:ER_homotopy_category}, \ref{prop:dsdrBiterminal}, and
Theorem~\ref{theorem:biterm} respectively, with essentially the same proof as before in each case. 

\begin{proposition}\label{prop:terminalslice2}
 The identity morphism on $A$ is a terminal object in the slice $\cv/I$-category $\ca/A$.
\end{proposition}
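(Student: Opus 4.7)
The plan is to compute, for an arbitrary object $(B,f)$ of $\ca/A$, the hom-object from $(B,f)$ to $(A,1_A)$ in the $\cv/I$-enriched sense, and verify that it is the terminal object of $\cv/I$, which is the unit $1_I\colon I\to I$.

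First I would recall the $\cv/I$-category structure on $\ca/A$. By Example~\ref{ex:comma}, the $\cv/I$-enrichment is obtained by equipping the underlying $\cv$-category $\ca/A$ with its augmentation $Q\colon \ca/A\to\ci$, sending $(B,b)\mapsto\ast$ and acting on hom-objects as the left-hand projection in the defining pullback~\eqref{eq:slice}. In other words, the $\cv/I$-hom from $(B,f)$ to $(C,c)$ is the pullback $(\ca/A)((B,f),(C,c))$ of~\eqref{eq:slice} regarded as an object over $I$ via the left vertical.

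Second I would specialise to $(C,c)=(A,1_A)$. Then the right-hand vertical in~\eqref{eq:slice} is $\ca(B,1_A)=1_{\ca(B,A)}$, so the pullback of an identity is an identity: the hom-object is canonically isomorphic to $I$, and under this isomorphism the augmentation to $I$ is the identity $1_I\colon I\to I$. Hence in $\cv/I$ the hom from $(B,f)$ to $(A,1_A)$ is precisely the unit $1_I$, which is terminal in $\cv/I$ as noted in the discussion following~\eqref{eq:inthom}. Since this holds for every $(B,f)$, the object $(A,1_A)$ is terminal in the $\cv/I$-category $\ca/A$.

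I do not anticipate any real obstacle. The conceptual content, which explains why the earlier failure in Proposition~\ref{prop:terminalslice} (when $I\neq 1$) disappears here, is exactly the passage from $\cv$ to $\cv/I$: the obstruction was that $I$ need not be terminal in $\cv$, but $1_I$ is always terminal in $\cv/I$. The same pullback computation that merely gave $I$ in Proposition~\ref{prop:terminalslice} now gives the terminal object of the base of enrichment, so no additional hypothesis on $I$ is required.
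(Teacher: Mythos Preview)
Your proof is correct and follows essentially the same approach as the paper: compute the hom via the pullback~\eqref{eq:slice}, observe that the right vertical $\ca(B,1_A)$ is an isomorphism so the left vertical (the augmentation to $I$) is too, and conclude that the hom is the terminal object $1_I$ of $\cv/I$. Your added remark explaining why the obstruction in Proposition~\ref{prop:terminalslice} vanishes is accurate and helpful, though not present in the paper's terse version.
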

\begin{proof}
From ~\eqref{eq:slice}, for any $f\colon B\to A$ we have the pullback
\[ \xymatrix{
  (\ca/A)((B,f),(A,1_A)) \ar[r] \ar[d]  & \cb(B,A) \ar[d]^{\cb(B,1_A)} \\
  I \ar[r]_{f}  & \cb(B,A) } \]
 and since the pullback of an isomorphism is an isomorphism, the left vertical map is an isomorphism over $I$.  As such, it is terminal as an object of $\cv/I$.
\end{proof}

\begin{proposition}\label{prop:ER_homotopy_category2}
Let $U\colon\cb \to \ca$ and $A \in \ca$.  
\begin{enumerate}
\item Then the $\cv/I$-category $A/U$ has underlying category $A/U_0$.  Furthermore, parallel morphisms $f,g\colon(B,b) \rightrightarrows (C,c)$ are $\cv/I$-homotopic if and only if $f$ and $g$ are $\cv$-homotopic under $A$.
\item Then $\ca/A$ has underlying category $\ca_0/A$.  Furthermore, parallel morphisms $f,g\colon(B,b) \rightrightarrows (C,c)$ are $\cv/I$-homotopic if and only if $f$ and $g$ are $\cv$-homotopic over $A$.
\end{enumerate}
\end{proposition}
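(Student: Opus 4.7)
The plan is to rerun the argument of Proposition~\ref{prop:ER_homotopy_category} inside the monoidal category $\cv/I$, whose unit $(I,1_I)$ \emph{is} terminal, thereby removing the hypothesis $I=1$. The bridge is the observation that for $\en{X}=(X,f\colon X\to I)$ in $\cv/I$, a $\cv/I$-morphism $(I,1_I)\to \en{X}$ is exactly a $\cv$-morphism $x\colon I\to X$ with $f\circ x=1_I$; and an interval in $\cv/I$ is nothing other than an interval $(J,d,c,e)$ of $\cv$ augmented by $e\colon J\to I$, so that a $(\cv/I,J)$-morphism $J\to \en{X}$ is a $\cv$-morphism $h\colon J\to X$ with $f\circ h=e$. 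Under the isomorphism $\cv/I\cat\cong\cv\cat/\ci$, the $\cv/I$-enrichment on $A/U$ and $\ca/A$ is the one induced by the augmentations $P$ and $Q$ of Example~\ref{ex:comma}, so all the relevant data is controlled by the behaviour of hom-objects under these augmentations.

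For (1), recall from Section~\ref{subsect:comma} that the hom-object of $A/U$ sits in the pullback
\[\xymatrix{
A/U((B,b),(C,c)) \ar[dd] \ar[rr] && I \ar[d]^{A} \\
&& \ca(A,A) \ar[d]^{\ca(A,c)} \\
\cb(B,C) \ar[r]_{U} & \ca(UB,UC) \ar[r]_{\ca(b,UC)} & \ca(A,UC)
}\]
and that the augmentation $P$ is the top-right leg to $I$. A morphism in the underlying category of the $\cv/I$-category $A/U$ from $(B,b)$ to $(C,c)$ is a $\cv$-morphism $I\to A/U((B,b),(C,c))$ whose image under $P$ is $1_I$; by the universal property of the pullback, this is precisely a morphism $f\colon B\to C$ in $\cb_0$ satisfying $Uf\circ b=c$, i.e.\ a morphism of $A/U_0$. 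The same bookkeeping at the level of intervals shows that a $\cv/I$-homotopy between parallel morphisms $f,g\colon(B,b)\to(C,c)$ is a $\cv$-morphism $J\to A/U((B,b),(C,c))$ extending $(f,g)$ whose image under $P$ is $e$; by the pullback, this is exactly a $\cv$-homotopy $h\colon f\cong g$ in $\cb$ together with the constraint that $Uh\circ b$ factors through $e$, i.e.\ that $Uh\circ b$ is trivial. This is the definition of a $\cv$-homotopy under $A$.

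For (2), the analogous argument uses the pullback~\eqref{eq:slice}, where now the augmentation $Q$ is the left-hand projection to $I$. Running exactly the same unpacking with the constraint that the $I$-component is $1_I$ (respectively $e$) yields, from the defining equation $\ca(B,c)\circ\phi=b\circ x$ of the pullback, the conditions $c\circ f=b$ for underlying morphisms and $c\circ h=b\circ e$ for $J$-homotopies; the latter is precisely the assertion that $c\circ h$ is trivial, so we recover $\cv$-homotopies over $A$. The entire argument is routine diagram-chasing once the correspondence between $\cv/I$-data and $I$-augmented $\cv$-data is in place; the only point requiring care is the interpretation of ``trivial'' (in the sense of Remark after Definition~\ref{prop:ER_homotopy_category}) as the pullback condition that the $I$-component of an extension along $J$ coincides with $e$, and this is immediate.
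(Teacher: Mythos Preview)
Your proposal is correct and follows essentially the same approach as the paper: both identify a $\cv/I$-interval with a $\cv$-interval $(J,d,c,e)$ augmented by $e$, and then use the pullback description of the comma/slice hom-objects to see that the $I$-component of a map from the unit (respectively from $J$) is forced to be $1_I$ (respectively $e$), which is precisely what eliminates the need for the hypothesis $I=1$ from Proposition~\ref{prop:ER_homotopy_category}. Your treatment is slightly more explicit than the paper's---you spell out the translation of $\cv/I$-data into augmented $\cv$-data and treat part (2) directly rather than by the duality appeal---but the substance is identical.
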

\begin{proof}
Since the unit in $\cv/I$ is $1\colon I \to I$, a morphism in $(A/U)_0$ is given by a map into the pullback
\[\xymatrix{
   I \ar[r] \ar[ddr]_{1} & (A/U)((B,b),(C,c)) \ar[r] \ar[dd] & \cb(B,C) \ar[d]^-U \\
    && \ca(UB,UC) \ar[d]^{b^*} \\
 & I \ar[r]_{c} &  \ca(A,UC)
}\]
whose component in $I$ is the identity. 
As in the proof of Proposition~\ref{prop:ER_homotopy_category} this amounts to a morphism of $A/U_0$.  Now a $\cv/I$-interval is an interval in $\cv$ viewed as a diagram
\[ \xymatrix{
    (I+I,\nabla) \ar[r]^-{(d~c)} & (J,e) \ar[r]^e & (I,1)} \]
 in $\cv/I$.  Therefore a $(J,e)$-homotopy $f \cong g$ in $A/U$ is
 specified by a map from $J$ into the pullback whose component in $I$
 is fixed as $e\colon J \to I$, and so as in the proof of
 Proposition~\ref{prop:ER_homotopy_category} amounts to a
 $(\cv,J)$-homotopy $h\colon f \cong g$ for which $Uh \circ b$ is trivial.  The case of $\ca/A$ follows by duality, as before.
\end{proof}

\begin{proposition}\label{prop:shrinking2}
A morphism $f\colon A \to B \in \cc$ is shrinkable if and only if it is bi-terminal in the slice $\cv/\ci$-category $\cc/B$.
\end{proposition}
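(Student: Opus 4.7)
The plan is to mimic the proof of Proposition~\ref{prop:dsdrBiterminal}, simply substituting the $\cv/I$-enriched analogues (Propositions~\ref{prop:terminalslice2} and~\ref{prop:ER_homotopy_category2}) for the $\cv$-enriched statements that previously required the hypothesis $I=1$. Since these analogues hold without any restriction on $I$, the argument goes through verbatim.

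More concretely, I would proceed as follows. First, invoke Proposition~\ref{prop:terminalslice2} to observe that $1_B\colon B\to B$ is a genuine terminal object of the $\cv/I$-category $\cc/B$. Consequently, $(A,f)$ is bi-terminal in $\cc/B$ if and only if the unique morphism $(A,f)\to (B,1_B)$ in $\cc/B$ (which is $f$ itself, viewed as a morphism over $B$) is an equivalence in the sense of the homotopy $\ER$-category $\hh_*(\cc/B)$.

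Second, unpack what this equivalence means. By definition, it requires a morphism $s\colon (B,1_B)\to (A,f)$ together with $\cv/I$-homotopies $f\circ s\cong 1_{(B,1_B)}$ and $s\circ f\cong 1_{(A,f)}$ in $\cc/B$. A morphism $s\colon (B,1_B)\to(A,f)$ in $(\cc/B)_0$ is, by Proposition~\ref{prop:ER_homotopy_category2}, exactly a section $s\colon B\to A$ of $f$ in $\cc$; this automatically forces $f\circ s = 1_B$, so the first homotopy holds on the nose and contributes nothing further. For the second, Proposition~\ref{prop:ER_homotopy_category2} identifies a $\cv/I$-homotopy $s\circ f\cong 1_A$ in $\cc/B$ with a $\cv$-homotopy $h\colon s\circ f\cong_J 1_A$ in $\cc$ which is ``over $B$'', i.e.\ for which $f\circ h$ is trivial.

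Putting these identifications together, bi-terminality of $f$ in $\cc/B$ is precisely the existence of $s$, $J$, and $h$ satisfying the conditions in the definition of \dsdr\ morphism, so the equivalence holds. I do not expect any real obstacle here: the content of the proposition is entirely encapsulated in the two preceding technical lemmas, and the present statement is essentially a translation exercise matching the formal definition of ``shrinkable'' against the elementary meaning of ``equivalence in an $\ER$-category''.
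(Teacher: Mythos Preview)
Your proposal is correct and matches the paper's own proof essentially verbatim: the paper simply states that the argument of Proposition~\ref{prop:dsdrBiterminal} carries over unchanged upon replacing Propositions~\ref{prop:terminalslice} and~\ref{prop:ER_homotopy_category} by their $\cv/I$-enriched generalisations, Propositions~\ref{prop:terminalslice2} and~\ref{prop:ER_homotopy_category2}. Your more detailed unpacking of the equivalence is accurate and adds nothing beyond what the paper intends.
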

\begin{proof}
The proof of Proposition~\ref{prop:dsdrBiterminal} carries over unchanged on replacing the use of Propositions~\ref{prop:terminalslice} and \ref{prop:ER_homotopy_category} by their respective generalisations, Propositions~\ref{prop:terminalslice2} and \ref{prop:ER_homotopy_category2}.
\end{proof}

\begin{theorem}\label{theorem:biterm2}
Let \cb be a \cv-category with powers, and $U\colon\cb\to\ca$ a
  \cv-functor which preserves them; if $\eta\colon A\to
  UA'$ is bi-initial in the $\cv/I$-category $A/U$ then 
\[  \xymatrix @C3pc {
      \cb(A^{\prime},B) \ar[r]^-{U} & \ca(UA',UB) \ar[r]^-{\ca(\eta,UB)} &
      \ca(A,UB) } \] 
is bi-terminal in the $\cv/I$-category $\cv/ \ca(A,UB)$.
\end{theorem}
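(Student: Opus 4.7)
The plan is to reproduce the proof of Theorem~\ref{theorem:biterm} essentially verbatim, substituting Propositions~\ref{prop:terminalslice2} and \ref{prop:ER_homotopy_category2} in place of Propositions~\ref{prop:terminalslice} and \ref{prop:ER_homotopy_category} so that the assumption $I=1$ is no longer needed. By Proposition~\ref{prop:terminalslice2}, the identity on $\ca(A,UB)$ is terminal in the $\cv/I$-slice $\cv/\ca(A,UB)$; therefore bi-terminality of the map \eqref{eq:value} amounts to showing that for each object $\alpha\colon X \to \ca(A,UB)$ of $\cv/\ca(A,UB)$ there exists a lift $f\colon X \to \cb(A',B)$ over $\ca(A,UB)$, and that any two such lifts are $\cv/I$-homotopic in the slice. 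By Proposition~\ref{prop:ER_homotopy_category2}(2), the latter means precisely that they are $\cv$-homotopic over $\ca(A,UB)$.

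Next, I would translate both problems to the comma category $A/U$ using the power adjunction in \cb together with the isomorphism $U(X\pitchfork B)\cong X\pitchfork UB$, which is available since $U$ preserves powers. A lift $f$ as above corresponds under transposition to a morphism $f^\sharp\colon A' \to X\pitchfork B$ in \cb satisfying $Uf^\sharp\circ\eta = \alpha^\sharp$, where $\alpha^\sharp\colon A \to U(X\pitchfork B)$ is the transpose of $\alpha$. Equivalently, $f^\sharp$ is a morphism in the $\cv/I$-category $A/U$ from $\eta$ to $(X\pitchfork B,\alpha^\sharp)$. Bi-initiality of $\eta$ in the $\cv/I$-category $A/U$ immediately produces such an $f^\sharp$, and, given two such, produces a $\cv/I$-homotopy between them; by Proposition~\ref{prop:ER_homotopy_category2}(1) this is a $\cv$-homotopy $h^\sharp\colon A' \to J\pitchfork(X\pitchfork B)$ under $A$. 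Transposing via $J\pitchfork(X\pitchfork B)\cong X\pitchfork(J\pitchfork B)$ and the power adjunction then yields $h\colon X \to J\pitchfork\cb(A',B)$.

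The main technical point — though I do not expect it to be a genuine obstacle — is checking that the triviality condition defining ``$\cv$-homotopy under $A$'' for $h^\sharp$ corresponds, under the chain of adjunctions, to the triviality condition defining ``$\cv$-homotopy over $\ca(A,UB)$'' for $h$. This holds by naturality of the adjunction isomorphisms in $B$, exactly as in the proof of Theorem~\ref{theorem:biterm}. No genuinely new ingredient is required, since the body of that earlier argument is purely $\cv$-enriched and the hypothesis $I=1$ enters only via the two propositions whose $\cv/I$-variants have now been established.
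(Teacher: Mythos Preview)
Your proposal is correct and follows essentially the same approach as the paper, which simply states that the proof of Theorem~\ref{theorem:biterm} carries over unchanged upon replacing Proposition~\ref{prop:ER_homotopy_category} by Proposition~\ref{prop:ER_homotopy_category2}. Your write-up is in fact more detailed than the paper's one-line proof, and your explicit invocation of Proposition~\ref{prop:terminalslice2} is harmless extra care.
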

\begin{proof}
The proof of Theorem~\ref{theorem:biterm} carries over unchanged on replacing the use of Proposition~\ref{prop:ER_homotopy_category} by the more general Proposition~\ref{prop:ER_homotopy_category2}.
\end{proof}

We now turn to our main theorem. Its proof will rely on a technical
result, Proposition~\ref{cor:enough_cofibrant_limits_comma}, which the remainder of the section will be devoted to proving.

\begin{theorem}\label{thm:adjunction}
  Let \cb be a \cv-category with powers and enough cofibrantly-weighted limits, and let
  $U\colon\cb\to\ca$ be a \cv-functor that preserves them. Then $U$
  has a left shrink-adjoint if and only if $U_0$ satisfies the
  solution set condition.
\end{theorem}

\proof If $U$ has a left shrink-adjoint, and
$\eta_A\colon A\to UA'$ is a shrink-reflection, then the singleton
family consisting of $(A',\eta)$ is a solution set for $U_0$. This proves the ``only if'' direction.

Suppose conversely that $U_0$ satisfies the solution set condition.  By Proposition~\ref{cor:enough_cofibrant_limits_comma} below, it follows that the $\cv/I$-category $A/U$ has enough
cofibrantly-weighted limits. By Proposition~\ref{prop:ER_homotopy_category2} we know that the underlying category $(A/U)_0$ of the $\cv/I$-category $A/U$ is isomorphic to $A/U_0$. Since $U_0$ satisfies the solution set condition $A/U_0$, and so also $(A/U)_0$, have weakly initial sets of objects. 
Thus, by Theorem~\ref{thm:biinitial}, $A/U$ has a bi-initial object $\eta\colon A \to UA'$, and now by Theorem~\ref{theorem:biterm2} the morphism
  \[ \xymatrix @R1pc {
      \cb(A^{\prime},B) \ar[r]^-{U} & \ca(UA',UB) \ar[r]^-{\ca(\eta,UB)} &
      \ca(A,UB) }  . \]
is bi-terminal in the $\cv/I$-category $\cv/\ca(A,UB)$, and so is shrinkable by Proposition~\ref{prop:shrinking2}. 
\endproof

In order to prove Proposition~\ref{cor:enough_cofibrant_limits_comma},
we need to consider weighted limits in $\cv/I$-categories of the form
$P\colon A/U \to \ci$.  First, we show that these are comma categories in the $\cv/I$-enriched sense. In order to formulate this statement correctly, let us first observe that the forgetful functor $\cv\cat/\ci \to \cv\cat$ has a right adjoint $R$, sending $\ca$ to the $\cv/I$-category $\pi_2\colon\ca \times \ci \to \ci$.

\begin{lemma}\label{lem:comma2}
Given a $\cv$-functor $U\colon\cb \to \ca$ and an object $A \in \ca$, the $\cv
/I$-category $P\colon A/U \to \ci$ is isomorphic to the comma $\cv/I$-category $A / { R(U)}$.
\end{lemma}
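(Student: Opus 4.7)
My plan is to verify the isomorphism by comparing the universal properties of the two $\cv/I$-categories, exploiting the 2-adjunction $F \dashv R$ where $F\colon \cv\cat/\ci \to \cv\cat$ is the forgetful functor. Recall that this adjunction is essentially tautological: a $\cv/I$-functor from $\en{\cd} = (\cd, D\colon \cd \to \ci)$ into $R(\ca) = (\ca \times \ci, \pi_2)$ is the same as a $\cv$-functor $\cd \to \ca$, since the second component is forced by $D$; the same bijection works for $\cv/I$-natural transformations, making it a 2-adjunction.

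The first step is to unpack the universal property of $A/R(U)$ as a comma in the 2-category $\cv\cat/\ci$. For a test $\cv/I$-category $\en{\cd}$, a $\cv/I$-functor $\en{\cd} \to A/R(U)$ corresponds to a triple consisting of a $\cv/I$-functor $\en{\cd} \to \ci$ (forced to be the augmentation $D$), a $\cv/I$-functor $T\colon \en{\cd} \to R(\cb)$, and a $\cv/I$-natural transformation $A \circ D \Rightarrow R(U) \circ T$.

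The second step is to apply the 2-adjunction $F \dashv R$ to translate this data. The $\cv/I$-functor $T$ corresponds to a plain $\cv$-functor $S\colon \cd \to \cb$; under this correspondence $R(U) \circ T$ becomes $(US, D)\colon \cd \to \ca \times \ci$, while $A \circ D$ becomes $(\mathrm{const}_A, D)$. A $\cv/I$-natural transformation between these (whose $\pi_2$-components are forced to be identities) corresponds, again by the 2-adjunction, to a $\cv$-natural transformation $\alpha\colon \mathrm{const}_A \Rightarrow US$. The triple $(D, S, \alpha)$ is precisely the data classifying a $\cv$-functor $\cd \to A/U$ over $\ci$, by the universal property of the $\cv$-enriched comma $A/U$ equipped with its augmentation $P$; equivalently, a $\cv/I$-functor $\en{\cd} \to (A/U, P)$.

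The same sort of translation, carried out for 2-cells rather than 1-cells, matches $\cv/I$-natural transformations between $\cv/I$-functors $\en{\cd} \to A/R(U)$ with those between $\cv/I$-functors $\en{\cd} \to (A/U, P)$. Thus both $\cv/I$-categories represent the same 2-functor $(\cv\cat/\ci)\op \to \Cat$, and the enriched Yoneda lemma yields the required isomorphism. The main obstacle is bookkeeping: at each stage one must check that the augmentation-over-$\ci$ conditions match up on the nose, but the 2-adjunction $F \dashv R$ makes this essentially automatic because the $\ci$-components of all functors and transformations involved are forced by $D$.
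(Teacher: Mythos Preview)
Your argument is correct, and it takes a genuinely different route from the paper's proof. The paper proceeds by a direct, explicit computation: it first matches objects of $(A/U, P)$ with objects of $A/R(U)$ by unwinding what an object of the latter is, and then compares hom-objects by pasting together pullback squares in $\cv$, using that the projection $\pi_1\colon \ca(UB,UC)\times I \to \ca(UB,UC)$ is part of a product. Your proof instead identifies both $\cv/I$-categories via their universal property as comma objects, transporting the data across the $2$-adjunction $F \dashv R$ between $\cv\cat/\ci$ and $\cv\cat$.

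What each approach buys: the paper's computation is self-contained and avoids invoking any $2$-categorical machinery beyond the definition of comma objects; it also makes the isomorphism explicit, which is convenient for the later Lemma~\ref{lem:basechange} where one works directly with the hom-objects. Your argument is shorter and more conceptual, and it makes transparent \emph{why} the isomorphism holds: $R$ is a right $2$-adjoint and so preserves comma objects, while the unit $\cv/I$-category $(\ci,1)$ is terminal so that the first leg of the comma is forced. The only point needing a little care in your version is that the isomorphism $\cv/I\cat \cong \cv\cat/\ci$ is indeed $2$-categorical---the paper's Proposition states it only as an isomorphism of categories---but your parenthetical remark about $\cv/I$-natural transformations addresses exactly this, and the check is routine.
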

\begin{proof}
By Proposition~\ref{prop:ER_homotopy_category2}, an object of the
$\cv/I$-category $A/U$ is given by a pair $(B,f\colon I \to \ca(A,UB)
\in \cv)$, which bijectively corresponds to a pair
$(B,(f,1)\colon(I,1) \to (\ca(A,UB) \times I,\pi_2) \in \cv/I)$; 
that is, to an object of $A/R(U)$.

Given objects $(B,(f,1))$ and $(C,(g,1))$ of $A / R(U)$, consider the following diagram, in which each region is a pullback.
\[ \xymatrix{
    (A/R(U))((B,(f,1)),(C,(g,1))) \ar[r]_{} \ar[dd]_{} & \cb(B,C)\x I \ar[r]^-{\pi_1}
    \ar[d]_{U\x 1} 
    & \cb(B,C) \ar[d]_{U} \\
    & \ca(UB,UC)\x I \ar[r]^-{\pi_1} \ar[d]_{\ca(f,UC) \x 1} & \ca(UB,UC) \ar[d]_{\ca(f,UC)} \\
    I \ar[r]^{(g,1)} \ar@/_1pc/[rr]_g & \ca(A,UC)\x I \ar[r]^-{\pi_1} &
    \ca(A,UC) } \]
We may deduce an isomorphism
\[ (A/R(U))((B,(f,1)),(C,(g,1))) \cong A/U((B,f),(C,g)) \]
in $\cv/I$,
since the left hand side is defined by the pullback on the left, and the right hand side is defined by the composite pullback.
The remaining verifications of functoriality are straightforward.
\end{proof}

Next, we will consider weighted limits in the $\cv/I$-enriched sense.   To get started, observe, on comparing ~\eqref{eq:slice} and ~\eqref{eq:inthom}, that $\cv/I$, as a $\cv/I$-enriched category, is given by the slice $\cv$-category $Q\colon\cv/I \to \ci$.  We denote it by $\en{V/I}=(\cv/I,Q)$.  

Let $\en{D} = (\cd,D)$ be a $\cv/I$-category.  Then a $\cv/I$-weight $\en{D} \to \en{V/I}$ is specified by a commutative triangle as on the left below.
\[\xymatrix{
\cd \ar[r] \ar[dr]_{D} & \cv/I \ar[r] \ar[d]_{Q} & \cv \ar@{=}[d] \ar@{}[ld]|-{\Downarrow} & {} \ar@{}[d]|-{\equiv} & \cd \ar[rr]^-W \ar[rd]_{D}  & {} \ar@{}[d]|-{\Downarrow w} & \cv \\
& \ci \ar[r]_-I & \cv & & & \ci \ar[ru]_-I
}\]
By the universal property of the slice $\cv$-category $\cv/\ci$ this
amounts to a $\cv$-weight $W\colon \cd \to \cv$ together with a $\cv$-natural transformation $w \colon W \Rightarrow ID$.  We write $\en{W}=(W, w)$ for the $\cv/I$-weight.

Building on the above description, it is straightforward to see that
the presheaf $\cv/I$-category $[\en{D},\en{V/I}]$ is given by the
slice $\cv$-category $[\cd,\cv]/ID$ equipped with its natural
projection to $\ci$.  In particular, its homs are defined by the
following pullbacks.

\begin{equation}\label{eq:hom}
\xymatrix{
[\en{D},\en{V/I}](\en{F},\en{G}) \ar[d]_{U} \ar[rr] && [\cd,\cv](F,G) \ar[d]^{g_{*}} \\
I \ar[rr]_{f} && [\cd,\cv](F,ID)
}
\end{equation}
Finally, observe that given a diagram $S\colon\en{D} \to \en{A}$ and an object $X \in \en{A}$ the induced presheaf $$\en{A}(X,S-)\colon\en{D} \to \en{V/I}$$ consists of the $\cv$-weight $\ca(X,S-)\colon\cd \to \cv$ together with the augmentation $\ca(X,S-) \to I D$ having components $A_{X,SY}\colon\ca(X,SY) \to I$.  

\begin{lemma}\label{lem:basechange}
Let $\en{W}=(W,w)$ be a $\cv/I$-weight.  If $\cb$ has $W$-weighted limits then $R(\cb)$ has $\en{W}$-weighted limits; if $U\colon\cb \to \ca$ preserves $W$-weighted limits then $R(U)$ preserves $\en{W}$-weighted limits.
\end{lemma}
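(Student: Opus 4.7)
The plan is to unfold the definitions and compute the $\en{W}$-weighted limit in $R(\cb) = \cb \x \ci$ explicitly, exploiting the triviality of its $\ci$-component. First, observe that a $\cv/I$-functor $S\colon \en{D} \to R(\cb)$ amounts to no more than a $\cv$-functor $S\colon \cd \to \cb$, since the $\ci$-component of any such lift is forced to agree with the augmentation $D\colon \cd \to \ci$. By the description of induced presheaves just before the statement, the presheaf $R(\cb)(X, S-)\colon \en{D} \to \en{V/I}$ then has underlying $\cv$-weight $\cb(X, S-) \x ID\colon \cd \to \cv$ together with augmentation given by $\pi_2$.

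The main computation is to evaluate $[\en{D}, \en{V/I}](\en{W}, R(\cb)(X, S-))$ using the pullback formula \eqref{eq:hom}. Since products in $[\cd, \cv]$ are pointwise, there is a natural isomorphism $[\cd, \cv](W, \cb(X, S-) \x ID) \cong [\cd, \cv](W, \cb(X, S-)) \x [\cd, \cv](W, ID)$ under which the map $\pi_{2*}$ becomes the second projection of a product. The pullback of any such second projection $A \x B \to B$ along a morphism $I \to B$ is simply $A \x I$ (with the right-hand projection giving the augmentation), so \eqref{eq:hom} reduces, naturally in $X$, to $[\cd, \cv](W, \cb(X, S-)) \x I$ with its second projection to $I$.

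Assuming $\cb$ admits $\{W, S\}$, we rewrite this as $\cb(X, \{W, S\}) \x I$ equipped with $\pi_2$. Setting $L = (\{W, S\}, *) \in R(\cb)$, the hom $R(\cb)(X, L)$ in $\cv/I$ is precisely this, so $L$ represents the $\cv/I$-presheaf $X \mapsto [\en{D}, \en{V/I}](\en{W}, R(\cb)(X, S-))$ and therefore $\{\en{W}, S\}_{R(\cb)} \cong R(\{W, S\}_\cb)$. For preservation, if $U\colon \cb \to \ca$ preserves $\{W, S\}$, then $U(\{W, S\}_\cb) \cong \{W, US\}_\ca$, whence $R(U)(L) = (U(\{W, S\}_\cb), *) \cong R(\{W, US\}_\ca) \cong \{\en{W}, R(U) \circ S\}_{R(\ca)}$ by the existence part applied to the diagram $US$. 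The principal computational step is the pullback simplification; the rest is essentially bookkeeping about the action of $R$.
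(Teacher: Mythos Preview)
Your argument is correct and follows essentially the same approach as the paper: both identify a diagram in $R(\cb)$ with a $\cv$-functor $S\colon\cd\to\cb$, compute the relevant $\cv/I$-hom via the pullback formula~\eqref{eq:hom}, simplify it to $[\cd,\cv](W,\cb(X,S-))\times I$ using the product decomposition, and then recognise this as $R(\cb)(X,\{W,S\})$. Your treatment of the pullback step is slightly more explicit, but the strategy and all key ingredients coincide.
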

\begin{proof}
Consider a diagram $\en{D} \to R(\cb) = (\cb \times \ci,\pi_2)$, which
is necessarily of the form $\overline{S}=(S,D)$ for $S\colon\cd \to
\cb$.  We claim that
the limit $\{W,S\}$ also has the universal property of
$\{\en{W},\overline{S}\}$.

Using the definition of homs in $[\en{D},\en{V/I}]$ we have a pullback
\[ \xymatrix @R2pc @C2pc {
[\en{D},\en{V/I}](\en{W},R(\cb)(B,\overline{S}-)) \ar[r]^{U} \ar[d]_{} & [\cd,\cv](W,\cb(B,S-)\x
ID) \ar[d]^{{(\pi_2)_{*}} } \\
I \ar[r]_{w}& [\cd,\cv](W,ID) 
  } \]
and so, using the universal property of the product $\cb(B,S-)\x ID$,
an isomorphism
$$[\en{D},\en{V/I}](\en{W},R(\cb)(B,\overline{S}-)) \cong [\cd,\cv](W,\cb(B,S-))\x I$$
over $I$, and now the right hand side is naturally isomorphic to
\[\cb(B,\{W,S\})\x I = R(\cb)(B,\{W,S\}),\]
as required.  This proves that $\{W,S\}$ also has the universal property of
$\{\en{W},\overline{S}\}$.  Given this, the corresponding statement about preservation is straightforward.
\end{proof}

\begin{lemma}\label{lem:pres}
If $\cb$ has enough cofibrantly-weighted limits and $U\colon\cb \to
\ca$ preserves them, then $R(\cb)$ has enough cofibrantly-weighted
limits (in the $\cv/I$-enriched sense) and $R(U)\colon R(\cb) \to R(\ca)$ preserves them.
\end{lemma}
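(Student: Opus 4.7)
The plan is to lift the cofibrant weight from the $\cv$-enriched setting to the $\cv/I$-enriched one. Let $\en{D} = (\cd, D)$ be a small $\cv/I$-category, with unique $\cv/I$-functor $\en{P}$ to the unit $\cv/I$-category, corresponding on augmentations to $D\colon \cd \to \ci$. Applying the hypothesis on $\cb$ to the $\cv$-functor $D$, we obtain a cofibrant $\cv$-weight $Q\colon \cd \to \cv$ together with a pointwise trivial fibration $q\colon Q \to ID$ for which $\cb$ admits $\{Q, -\}$. The pair $\en{Q} = (Q, q)$ is then a $\cv/I$-weight $\en{D} \to \en{V/I}$, and since trivial fibrations in $\cv/I$ are created by the forgetful functor to $\cv$, the augmentation $q$ witnesses $\en{Q} \to \en{I}\en{P}$ as a pointwise trivial fibration in $[\en{D}, \en{V/I}]$.

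The technical core is to check cofibrancy of $\en{Q}$ as a $\cv/I$-weight. Given a pointwise trivial fibration $\en{p}\colon \en{F} \to \en{G}$ in $[\en{D}, \en{V/I}]$, with underlying $\cv$-natural transformation $p\colon F \to G$ over $ID$, the pullback description~\eqref{eq:hom} of the homs in $[\en{D}, \en{V/I}]$ assembles into the following diagram:
\[ \xymatrix{
[\en{D},\en{V/I}](\en{Q}, \en{F}) \ar[r] \ar[d] & [\en{D},\en{V/I}](\en{Q}, \en{G}) \ar[r] \ar[d] & I \ar[d]^q \\
[\cd,\cv](Q, F) \ar[r]_{p_*} & [\cd,\cv](Q, G) \ar[r]_{g_*} & [\cd,\cv](Q, ID)
} \]
Here the right-hand square is a pullback by the defining formula for $[\en{D}, \en{V/I}](\en{Q}, \en{G})$, and the outer rectangle is also a pullback, serving as the defining formula for $[\en{D}, \en{V/I}](\en{Q}, \en{F})$ (using $g_* \circ p_* = f_*$, since $p$ is over $ID$). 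The pasting lemma then forces the left-hand square to be a pullback. Since $Q$ is $\cv$-cofibrant and $p$ is pointwise trivial, the map $p_* = [\cd,\cv](Q, p)$ is a trivial fibration in $\cv$; since trivial fibrations are stable under pullback, so is $[\en{D},\en{V/I}](\en{Q}, \en{p})$, establishing cofibrancy of $\en{Q}$ in the $\cv/I$-enriched sense.

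Finally, Lemma~\ref{lem:basechange} ensures that $R(\cb)$ has $\en{Q}$-weighted limits and $R(U)$ preserves them, given that $\cb$ has $Q$-weighted limits and $U$ preserves them. Combining the three steps yields the lemma. The main obstacle is the pullback-pasting argument establishing cofibrancy of $\en{Q}$, but this reduces cleanly to the already-available $\cv$-enriched cofibrancy of $Q$ together with stability of trivial fibrations under pullback; the remainder amounts to unravelling definitions via Lemma~\ref{lem:basechange} and the hom formula~\eqref{eq:hom}.
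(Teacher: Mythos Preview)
Your proof is correct and follows essentially the same approach as the paper's: both lift the $\cv$-cofibrant weight $Q$ with its trivial fibration $q\colon Q\to ID$ to the $\cv/I$-weight $\en{Q}=(Q,q)$, invoke Lemma~\ref{lem:basechange} to transfer the existence and preservation of limits to $R(\cb)$ and $R(U)$, and establish cofibrancy of $\en{Q}$ via the same pullback-pasting argument using the hom formula~\eqref{eq:hom}. The only difference is cosmetic ordering of the steps.
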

\begin{proof}
Let $\en{D}$ be a $\cv/I$-category.  Re-expressing the definition of enough cofibrantly-weighted limits in these terms, we see that there exists a weight $\en{W}=(W,w)$ with $W$ cofibrant and $w\colon W \to ID$ a trivial fibration, and such that $\cb$ has $W$-weighted limits and $U$ preserves them.  By Lemma~\ref{lem:basechange}, this implies that $R(\cb)$ has $\en{W}$-weighted limits and $R(U)$ preserves them.

Now $(ID,1)$ is the terminal $\cv/I$-weight so that the trivial fibration $w\colon W \to ID$ equally specifies a trivial fibration $w\colon\en{W} \to 1$.  Therefore, if we can show that $W$ cofibrant implies $\en{W}$ cofibrant, we will have shown that $R(\cb)$ has enough cofibrantly-weighted limits as a $\cv/I$-category and that $R(U)$ preserves them.

To this end, suppose that $\alpha\colon\en{F} \to \en{G}$ is a trivial
fibration and consider the commutative square in the top half of
the following diagram. 
\begin{equation*}
\xymatrix{
[\en{D},\en{V/I}](\en{W},\en{F}) \ar[d]_{\alpha_*} \ar[rr]^{U_{W,F}} && [\cd,\cv](W,F) \ar[d]^{\alpha_{*}} \\
[\en{D},\en{V/I}](\en{W},\en{G}) \ar[d] \ar[rr]^{U_{W,G}} && [\cd,\cv](W,G)\ar[d]^{g_*} \\
 I \ar[rr]_{w} && [\cd,\cv](W,ID)}
 \end{equation*}
Pasting this with the pullback square \eqref{eq:hom} defining
$[\en{D},\en{V/I}](\en{W},\en{G})$ as in the bottom half of the
diagram yields the pullback square defining $[\en{D},\en{V/I}](\en{W},\en{F})$.  Therefore the upper square above is a pullback in $\cv$. Since $W$ is cofibrant, the right hand $\alpha_*$ is a trivial fibration whence so is its pullback on the left, as required.
\end{proof}

With this in place, we can finally prove the missing proposition, so completing the proof of Theorem~\ref{thm:adjunction}.

\begin{proposition}\label{cor:enough_cofibrant_limits_comma}
If $\cb$ has enough cofibrantly-weighted limits and $U$ preserves them, in the $\cv$-sense, then $A/U$ has enough cofibrantly-weighted limits in the $\cv/I$-sense.
\end{proposition}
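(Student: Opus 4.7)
Let $\en{\cd} = (\cd, D\colon \cd \to \ci)$ be a small $\cv/I$-category. To establish the claim, I must produce a cofibrant $\cv/I$-weight $\en{Q}\colon \en{\cd} \to \cv/I$ together with a pointwise trivial fibration $\en{Q} \to 1$ (the terminal weight, constant at the unit $(I,1_I)$ of $\cv/I$) such that $A/U$ admits the $\en{Q}$-weighted limit.

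The construction proceeds in two stages. First, since $D$ is a $\cv$-functor $\cd \to \ci$ and $\cb$ has enough cofibrantly-weighted limits in the $\cv$-sense, I obtain a cofibrant $\cv$-weight $Q\colon \cd \to \cv$ equipped with a pointwise trivial fibration $q\colon Q \to I D$ (where $I\colon \ci \to \cv$ selects the unit) such that $\cb$ admits the $Q$-weighted limit. Under the isomorphism $\cv/I\cat \cong \cv\cat/\ci$, a $\cv/I$-weight on $\en{\cd}$ is precisely a $\cv$-functor $\cd \to \cv$ together with a $\cv$-natural augmentation to $ID$; so the pair $(Q,q)$ assembles into a $\cv/I$-weight $\en{Q}\colon \en{\cd} \to \cv/I$. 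Because the terminal $\cv/I$-weight is $\Delta(I,1_I)$ and the trivial fibrations of $\cv/I$ are created from those of $\cv$, the map $q$ furnishes the required pointwise trivial fibration $\en{Q} \to 1$ in $[\en{\cd}, \cv/I]$.

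Two things remain to be checked, and these are the two deferred lemmas. The first is that $\en{Q}$ is cofibrant in $[\en{\cd}, \cv/I]$. The plan is to express an arbitrary lifting problem against a pointwise trivial fibration $\en{F} \to \en{G}$ in $[\en{\cd}, \cv/I]$ in terms of a lifting problem against the underlying $\cv$-natural transformation $F \to G$ in $[\cd, \cv]$, using the pullback description~\eqref{eq:inthom} of the internal hom in $\cv/I$ to reduce cofibrancy of $\en{Q}$ to cofibrancy of $Q$ together with compatibility of the augmentation. The second is that $A/U$ admits the $\en{Q}$-weighted limit: since $\ci$ is terminal in $\cv/I\cat$, it is trivially complete as a $\cv/I$-category, so a $\cv/I$-enriched analogue of Proposition~\ref{prop:completeness} applied to the comma square defining $A/U$ (now viewed in $\cv/I\cat$ via the augmentation $P\colon A/U \to \ci$) produces the desired limit from the $Q$-weighted limit in $\cb$, which $U$ preserves by hypothesis.

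The main obstacle is the second lemma, as it requires reconciling the $\cv$-enriched definition of the comma category with its natural $\cv/I$-structure and checking that a $\cv/I$-cone on a diagram $\en{S}\colon \en{\cd} \to A/U$ weighted by $\en{Q}$ corresponds bijectively (and $\cv/I$-naturally) to a $\cv$-cone on the underlying diagram weighted by $Q$, with the augmentation data handled automatically by the fact that both weights map compatibly to $I D$. Once this comparison is in place, the representability of the cone functor transfers from $\cb$ (where we use that $U$ preserves $Q$-limits) to $A/U$ by the pullback argument of Proposition~\ref{prop:completeness}.
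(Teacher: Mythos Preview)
Your proposal is correct and identifies the same two deferred ingredients as the paper: (i) the $\cv/I$-weight $\en{Q}=(Q,q)$ is cofibrant whenever $Q$ is, via the pullback description~\eqref{eq:hom} of homs in $[\en{\cd},\cv/I]$; and (ii) $A/U$ admits $\en{Q}$-weighted limits built from the $Q$-weighted limits in $\cb$ that $U$ preserves.

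The paper organises (ii) differently, and more modularly. Rather than directly comparing $\cv/I$-cones in $A/U$ with $\cv$-cones, it introduces the right adjoint $R\colon\cv\cat\to\cv/I\cat$ sending $\ca$ to $(\ca\times\ci,\pi_2)$, and proves two separate facts: first (Lemma~\ref{lem:basechange}/\ref{lem:pres}) that $R(\cb)$ has $\en{Q}$-limits whenever $\cb$ has $Q$-limits and $R(U)$ preserves them, together with your cofibrancy-transfer argument; second (Lemma~\ref{lem:comma2}) that the $\cv/I$-category $(A/U,P)$ is isomorphic to the genuine $\cv/I$-comma $A/R(U)$. Proposition~\ref{prop:completeness} then applies \emph{verbatim} in $\cv/I\cat$ to $A/R(U)$, since the unit $\cv/I$-category is terminal and hence complete. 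What this buys is a clean separation of the change-of-enrichment issue from the comma-category issue; your direct comparison of cones amounts to redoing the pullback computation of Proposition~\ref{prop:completeness} with augmentation bookkeeping folded in, which works but is less transparent.

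One imprecision worth flagging: you cannot literally ``view the comma square defining $A/U$ in $\cv/I\cat$ via the augmentation $P$'', because $\cb$ and $\ca$ carry no augmentation to $\ci$ and hence are not objects of $\cv/I\cat$. This is precisely the gap the functor $R$ fills. Your final paragraph correctly recognises this as the main obstacle and proposes the direct cone comparison as a workaround; that does succeed, but the paper's route via $R$ avoids the need for it.
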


\begin{proof} 
By Lemma~\ref{lem:pres} $R(\cb)$ has enough cofibrantly-weighted limits and $R(U)\colon R(\cb) \to R(\ca)$ preserves them.  Therefore, by Proposition~\ref{prop:completeness}, the comma $\cv/I$-category $A/R(U)$ has enough cofibrantly-weighted limits.  By Lemma~\ref{lem:comma2}, $A/R(U)$ is isomorphic to the $\cv/I$-category $P\colon A/U \to \ci$, which consequently has enough cofibrantly-weighted limits too.
\end{proof}

In the case of the ordinary adjoint functor theorem, there is of
course a converse: if a functor has a left adjoint then it satisfies
the solution set condition and preserves any existing limits.
In the case of Theorem~\ref{thm:adjunction}, if there is a left shrink-adjoint, then the solution set
condition does hold (as stated in the theorem), but the preservation
of enough cofibrantly-weighted limits need not, as the following 
example shows.

\begin{example}
   Let $\cv = \Set$ with the split model structure, so that the
   shrinkable morphisms are just the surjections, and we are dealing with 
   classical weak category theory. Let $G$ be a non-trivial group, and
   $\Set^G$ the category of $G$-sets. The forgetful functor $U\colon
   \Set^G\to\Set$ has a left adjoint $F$.
   Let $P\colon\Set^G\to\Set$ be the functor sending a $G$-set to
   its set of orbits. There is a natural transformation $\pi\colon
   U\to P$ whose components are surjective. For each set $X$, the
   composite of the unit $X\to UFX$ and $\pi\colon UFX\to PFX$
   exhibits $F$ as a weak left adjoint to $P$.
   But $P$ does not preserve powers (or products): in particular, it does not preserve the power $G^2$.
\end{example}

This shows that having a left shrink-adjoint does not imply
preservation of limits up to isomorphism, as would be needed for a
genuine converse. A more reasonable request would be preservation in
some homotopical sense. It turns out that in good cases this does
hold, as in the following result, which can be seen as a sort of
partial converse to our adjoint functor theorem. 

\begin{theorem}\label{thm:preservation}
  Let \cb and \ca be locally fibrant \cv-categories, and let
  $U\colon\cb\to\ca$ have a left shrink-adjoint. Let $Q\colon\cd\to\cv$ be a cofibrant weight for which $[\cd,\cv](Q,-)\colon [\cd,\cv]\to\cv$ sends levelwise shrinkable morphisms between levelwise fibrant objects to weak equivalences. Then $U$ preserves homotopy $Q$-weighted limits. 
\end{theorem}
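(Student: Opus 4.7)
The plan is to show that for each $A\in\ca$ the canonical comparison morphism $\ca(A,UL)\to [\cd,\cv](Q,\ca(A,US-))$ induced by $U$ and the defining cone $\lambda\colon Q\to\cb(L,S-)$ of $L:=\{Q,S\}$ is a weak equivalence; this is the natural sense in which $UL$ represents the homotopy $Q$-weighted limit of $US$. To this end, fix a choice of $\ce$-weak left adjoint $F$ with weak unit $\eta_A\colon A\to UFA$, so that the induced morphism
\[
\phi_B\colon\cb(FA,B)\xrightarrow{U}\ca(UFA,UB)\xrightarrow{\ca(\eta_A,UB)}\ca(A,UB)
\]
is shrinkable for every $B\in\cb$.

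The next step is to contemplate the square
\[
\xymatrix @R1.5pc @C3pc {
\cb(FA,L)\ar[r]^-{\cong}\ar[d]_{\phi_L} & [\cd,\cv](Q,\cb(FA,S-))\ar[d]^{[\cd,\cv](Q,\phi_{S-})} \\
\ca(A,UL)\ar[r] & [\cd,\cv](Q,\ca(A,US-))
}
\]
whose upper horizontal is the defining isomorphism of $L=\{Q,S\}$, whose right vertical is obtained by post-composition with $\phi_{S-}\colon\cb(FA,S-)\to\ca(A,US-)$ in $[\cd,\cv]$, and whose lower horizontal is the canonical comparison. I would verify that this square commutes by unpacking both composites as the $Q$-cone that sends $f\colon FA\to L$ to $Q\xrightarrow{\lambda}\cb(L,S-)\xrightarrow{U}\ca(UL,US-)\xrightarrow{\ca(Uf\circ\eta_A,US-)}\ca(A,US-)$; this amounts to the identity $\phi_{SD}\circ\cb(f,SD)=\ca(Uf\circ\eta_A,USD)\circ U_{L,SD}$, which follows directly from functoriality of $U$ and the definition of $\phi_\bullet$.

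Having the square, I would close the argument as follows: the upper horizontal is an isomorphism; the left vertical $\phi_L$ is shrinkable with fibrant domain $\cb(FA,L)$ (by local fibrancy of $\cb$), hence a weak equivalence by Proposition~\ref{prop:e-implies-we}; and the right vertical is $[\cd,\cv](Q,-)$ applied to $\phi_{S-}$, which is levelwise shrinkable (by the weak-adjoint property) between levelwise fibrant objects (by local fibrancy of $\cb$ and $\ca$), hence a weak equivalence by the hypothesis on $Q$. Two-out-of-three then forces the bottom arrow to be a weak equivalence, which is the desired preservation statement. The main obstacle I anticipate is the verification of commutativity of the square; the remainder is a bookkeeping assembly of the hypotheses and Proposition~\ref{prop:e-implies-we}.
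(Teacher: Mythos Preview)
Your approach is essentially identical to the paper's: the same commutative square, the same identification of three sides as weak equivalences, and the same two-out-of-three conclusion. The paper omits your explicit verification that the square commutes, but otherwise the arguments coincide.

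There is one point to correct. You take $L$ to be the \emph{strict} weighted limit $\{Q,S\}$, so that the top horizontal is an isomorphism. But the theorem concerns \emph{homotopy} $Q$-weighted limits: a cone $\lambda\colon Q\to\cb(L,S-)$ exhibits $L$ as the homotopy limit $\{Q,S\}_h$ when, for each $B\in\cb$, the induced $\cb(B,L)\to[\cd,\cv](Q,\cb(B,S-))$ is merely a weak equivalence in $\cv$, not an isomorphism. In particular $\cb$ is not assumed to have strict $Q$-limits, so your $L:=\{Q,S\}$ need not exist. The fix is immediate: start instead from a homotopy limit $L$, so the top horizontal is a weak equivalence rather than an isomorphism, and the two-out-of-three argument goes through unchanged.
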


\proof
The induced maps $\cb(A',B)\to\ca(A,UB)$ are shrinkable morphisms between fibrant objects and so are weak equivalences (between fibrant objects).
Suppose that $\phi\colon Q\to\cb(L,S)$ exhibits $L$ as the homotopy weighted limit $\{Q, S\}_h$, meaning that for each $B \in \cb$ the induced morphism
$\cb(B,L) \to [\cd,\cv](Q,\cb(B,S))$ is a weak equivalence in $\cv$.  Then given $A\in\ca$ there is a commutative diagram
\[ \xymatrix @R2pc @C2pc {
      \cb(A',L) \ar[r] \ar[d] & [\cd,\cv](Q,\cb(A',S)) \ar[d] \\
      \ca(A,UL) \ar[r] & [\cd,\cv](Q,\ca(A,US))
    } \]
in which the upper horizontal is the weak equivalence expressing the
universal property of $L=\{Q, S\}_h$, the left vertical is a shrinkable
morphism (and so a weak equivalence) expressing the universal property of
$A'$, and the right vertical is given by $[\cd,\cv](Q,-)$ applied to
a levelwise shrinkable morphism between levelwise fibrant objects, and so is a weak equivalence. Then the bottom map is also a weak equivalence, and so $U$ preserves the homotopy limit.
\endproof

\begin{example}
In \cite{LackRosicky2012}, $Q$-weighted limits were said to be {\em \ce-stable} if $\ce$, seen as a full subcategory of $\cv^\two$, is closed under $Q$-weighted limits; in other words, if $[\cd,\cv](Q,-)\colon[\cd,\cv]\to\cv$ sends a $\cv$-natural transformation whose components are in $\ce$ to a morphism in $\ce$. If moreover the \dsdr\ morphisms are weak equivalences then the hypothesis in the theorem holds, and so $U$ will preserve homotopy $Q$-weighted limits. This is the case for all cofibrant $Q$ in the case of Examples~\ref{ex:trivial}, \ref{ex:split}, and \ref{ex:Cat}, but not in Example~\ref{ex:Joyal}.
\end{example}

\begin{example}
If the enriched projective model structure on $[\cd,\cv]$ exists, as is the case for Example~\ref{ex:Joyal}, then once again the hypothesis holds for all cofibrant $Q$. For a levelwise shrinkable morphism between levelwise fibrant objects is a weak equivalence between fibrant objects in the projective model structure, and so sent by $[\cd,\cv](Q,-)$ to a weak equivalence (between fibrant objects).
\end{example}

\section{Accessibility and shrink-colimits}\label{sect:cocompleteness}

In ordinary category theory an accessible category which is complete
is also cocomplete --- this provides one simple way to see that
algebraic categories, which are obviously complete, are also
cocomplete.

The present section is devoted to generalising this to our setting,
which we do in Theorem~\ref{thm:colimits}.
In Section~\ref{sect:applications} we shall use
Theorem~\ref{thm:colimits}
to deduce homotopical cocompleteness of various enriched categories of (higher) categorical structures.

\subsection{Shrink-colimits}

The paper \cite{LackRosicky2012} introduced the notion of $\ce$-weak
colimit for any class of morphisms $\ce$. We shall use this in
our current setting where $\ce$ consists of the \dsdr\
morphisms, and call the resulting notion a {\em
  shrink-colimit}. 

\begin{definition}
Let $W\colon \cd\op \to \cv$ be a weight, and consider a diagram
$S\colon \cd \to \ca$.  A $\cv$-natural transformation
\[ \xymatrix @R2pc @C2pc {
W \ar[r]^-{\eta} & \ca(S-,C) 
  } \]
in $[\cd\op,\cv]$ exhibits $C$ as the shrink-colimit of $S$ weighted by $W$ if the induced map
\begin{equation}\label{eq:weakcolimit}
\xymatrix{
\ca(C,A) \ar[rr]^-{\eta^{*} \circ \ca(S-,1)} &&
[\cd\op,\cv](W,\ca(S-,A)) 
}
\end{equation}
 is shrinkable for all $A\in\ca$.
\end{definition}
This is equally the assertion that $\eta$ exhibits $C$ as a
shrink-reflection of $W$ along the \cv-functor
$\ca(S-,1)\colon\ca \to [\cd\op,\cv]$ sending $A$ to $\ca(S-,A)$.

\begin{example}\label{ex:colimits-trivial}
 For $\cv$ with the trivial model structure, shrink-colimits are weighted colimits in the usual sense.
 \end{example}
 
\begin{example}\label{ex:colimits-split}
 For $\cv$ with the split model structure, $\eta\colon W \to
  \ca(S-,C)$ exhibits $C$ as the shrink-colimit just when the
  induced map $\ca(C,A) \to [\cd\op,\cv](W,\ca(S-,A))$ is a split
  epimorphism in $\cv$.  This implies in particular that given $f\colon W \to
  \ca(S-,A)$ there exists $f'\colon C \to A$ such that the triangle
\begin{equation}\label{eq:fact}
\xymatrix{
W \ar[d]_{\eta} \ar[r]^{f} & \ca(S-,A) \\
\ca(S-,C) \ar[ur]_{\ca(-,f')} 
}
\end{equation}
commutes. Indeed, when $\cv = \Set$, it amounts to precisely this
condition, and then if $W$ is the terminal weight, the shrink-colimit reduces to the ordinary weak (conical) colimit of $S$.
\end{example}

\begin{example}\label{ex:colimits-2cats}
 In the $\Cat$ case, the map $$\ca(C,A) \to
  [\cd\op,\Cat](W,\ca(S-,A))$$ is required to be a surjective equivalence.  So
  given $f\colon W \to \ca(S-,A)$ we have a factorisation as in
  \eqref{eq:fact} above; and furthermore, given $\alpha\colon f
  \Rightarrow g \in [\cd\op,\Cat](W,\ca(S-,A))$  there exists a
  unique $\alpha'\colon f' \Rightarrow g'$ such that $\ca(S-,\alpha') \circ \eta = \alpha$.
For instance, the shrink-coequalizer 
\begin{equation*}
\xymatrix{
X \ar@<1ex>[r]^{f} \ar@<-1ex>[r]_{g} & Y \ar[r]^{e} & C
}
\end{equation*}
satisfies $e \circ f = e \circ g$ and has the following properties:
\begin{enumerate}
\item if $h\colon Y \to D$ satisfies $h\circ f=h\circ g$, there
  \emph{exists} $h^{\prime}\colon C \to D$ such that $h^{\prime} \circ e = h$;
\item if moreover $k\colon Y \to D$ satisfies $k\circ f=k\circ g$, and
  $\theta\colon h \Rightarrow k$ satisfies $\theta \circ f = \theta
  \circ g$, there exists a unique $\theta^{\prime}\colon h^{\prime} \Rightarrow k^{\prime}$ such that $\theta^{\prime} \circ e = \theta$.
\end{enumerate}

Let us compare shrink-colimits with the better known notion of
bicolimits --- given $W$ and $S$ as before, the $W$-weighted bicolimit
$C=W*_b S$ \cite{Kelly-limits} is defined by a
\emph{pseudonatural transformation} $\eta\colon W \to\ca(S-,C)$ such
that the induced map $$\ca(C,A) \to \Ps(\cd\op,\Cat)(W,\ca(S-,A))$$
is an equivalence of categories, where $\Ps(\cd\op,\Cat)$ denotes
the $2$-category of $2$-functors, pseudonatural transformations and
modifications from $\cd\op$ to $\Cat$.  
To make the comparison,
recall from \cite[ Remark~3.15]{Blackwell1989Two} that the identity on
objects inclusion of $[\cd\op,\Cat]$ in $\Ps(\cd\op,\Cat)$ has a left
adjoint $(-)^{\prime}$ called the \emph{pseudomorphism classifier},
with counit $q_W\colon W^{\prime} \to W$.  The morphism $q_W \colon
W^{\prime} \to W$ is in fact a cofibrant replacement of the weight $W$
in the projective model structure on $[\cd\op,\Cat]$: see
\cite[ Sections~5.9 and~6]{Lack2007Homotopy-theoretic}.
Given the isomorphism
\[ [\cd\op,\Cat](W^{\prime},\ca(S-,A)) \cong
  \Ps(\cd\op,\Cat)(W,\ca(S-,A)), \]
the bicolimit of $S$ weighted by $W$
equally amounts to a $2$-natural transformation $\eta\colon W^{\prime}
\to \ca(S-,C)$ for which the induced map
\[\ca(C,A) \to [\cd\op,\Cat](W^{\prime},\ca(S-,A))\]
is an \emph{equivalence of
  categories} for all $A$.  In particular, the shrink-colimit of $S$
weighted by $W^{\prime}$ is a $W$-weighted bicolimit of $S$,
but satisfies the stronger condition that genuine factorisations, as
in \eqref{eq:fact}, exist.  Thus any  $2$-category admitting shrink-colimits also admits bicolimits (with a stronger universal
property), but also admits some shrink-colimits, such as shrink-coequalizers, whose defining weights are not cofibrant, and so do not correspond to any bicolimit. 
\end{example}

\begin{example}\label{ex:colimits-cosmoi}
In the $\SSet$-case, the map $$\ca(C,A) \to [\cd\op,\SSet](W,\ca(S-,A))$$ is a \dsdr\ morphism and so a weak equivalence (with respect to the Joyal model structure) with a section.  

In the case that $W$ is flexible and $\ca$ is locally fibrant --- as,
for instance, if $\ca$ is an $\infty$-cosmos --- we can say rather more.  For then, since $\ca(S-,A)$ is pointwise fibrant in $[\cd\op,\SSet]$ and $W$ is cofibrant, the hom $[\cd\op,\SSet](W,\ca(S-,A))$ is also fibrant: it is a quasicategory.  It follows, by Example~\ref{ex:shrinking}, that the \dsdr\ morphism $\ca(C,A) \to [\cd\op,\SSet](W,\ca(S-,A))$ is a \emph{surjective equivalence of quasicategories}.

For $\ca$ an $\infty$-cosmos and $W$ a flexible weight, Riehl and
Verity \cite{Riehl2018On} define the \emph{flexibly-weighted homotopy
  colimit} of $S$ weighted by $W$ as an object $C$ together with a
morphism $W \to \ca(S-,C)$ for which the induced map $\ca(C,A) \to
[\cd\op,\SSet](W,\ca(S-,A))$ is an equivalence of quasicategories for
all $A$.  In particular, if $\ca$ admits shrink-colimits it admits flexibly-weighted homotopy colimits with the stronger property that the comparison equivalence of quasicategories is in fact a surjective equivalence.
\end{example}

\subsection{Accessible $\cv$-categories with enough cofibrantly-weighted limits have shrink-colimits}

In the present section we suppose that (the unenriched category) $\cv_0$ is locally
presentable. It follows by \cite[Proposition~2.4]{vcat} that there is a regular cardinal
$\lambda_0$ such that $\cv_0$ is locally $\lambda_0$-presentable, the unit
object $I$ is $\lambda_0$-presentable, and the tensor product of two
$\lambda_0$-presentable objects is $\lambda_0$-presentable. Moreover, the
corresponding statements will remain true for any regular $\lambda \geq \lambda_0$.

\begin{notation}\label{notation-lambda}
  We let $\lambda_0$ denote a fixed regular cardinal as in the
  previous paragraph. Whenever we consider
  $\lambda$-presentability or $\lambda$-accessibility in the $\cv$-enriched context, 
    we shall always suppose that $\lambda$ is a regular
  cardinal and $\lambda\ge\lambda_0$. 
\end{notation}

If $\ch$ is a small category we can speak of conical $\ch$-shaped colimits in
$\cc$: these are $\ch$-shaped colimits in $\cc_0$ which are (required to be) preserved by
$\cc(-,C)_0\colon \cc_0\to \cv\op_0$ for each $C\in\cc$.
In particular we can speak of $\lambda$-filtered colimits in $\cc$,
corresponding to $\ch$-shaped colimits for all $\lambda$-filtered
categories $\ch$.

An object $A$ in $\cc$ is said to be \emph{$\lambda$-presentable} if
$\cc(A,-)\colon \cc \to \cv$ preserves $\lambda$-filtered colimits.
We say that $\cc$ is $\lambda$-\emph{accessible}\footnote{A 
 different notion of enriched accessibility was given in
 \cite{BorceuxQuinteiro}, but see \cite{LackTendas-Flat} for an
 analysis of the relationship between the two notions, including the
 fact that they agree in the crucial examples $\cv=\Cat$ and $\cv=\SSet$.}
if it admits
$\lambda$-filtered colimits and a set $\cg$ of $\lambda$-presentable
objects such that each object of $\cc$ is a $\lambda$-filtered colimit
of objects in $\cg$. It follows, arguing as usual, that the full
subcategory of $\lambda$-presentable objects in $\cc$ is essentially
small and we denote by $J\colon \cc_{\lambda} \to \cc$ a small
skeletal full subcategory of $\lambda$-presentables.
It follows from \cite[Theorem~5.19]{Kelly1982Basic} that $J$ is
\emph{dense}, and from \cite[Theorem 5.29]{Kelly1982Basic} that $J$
then exhibits $\cc$ as the free completion of $\cc_{\lambda}$ under
$\lambda$-filtered colimits.
(Indeed, the $\lambda$-accessible $\cv$-categories are equally the free completions of small $\cv$-categories under $\lambda$-filtered colimits.)

\begin{notation}\label{notation:nerve} 
  Let $D\colon\ca\to\cc$ be a \cv-functor with small domain. We write
  $N_D$ for the induced \cv-functor $\cc\to[\ca\op,\cv]$ sending an
  object $C\in\cc$ to the presheaf $\cc(D-,C)$ which in turn sends $A\in\ca$
  to $\cc(DA,C)$. Other authors have written $\cc(D,1)$ or $\widetilde{D}$
  for $N_D$; our notation is designed to remind that this is a
  generalised (N)erve.
\end{notation}

Let $\cc$ be a $\lambda$-accessible $\cv$-category, and $J\colon
\cc_{\lambda} \to \cc$ the inclusion of the full sub-$\cv$-category of $\lambda$-presentable objects.  Then $J$ is dense, so that the associated functor $N_J\colon \cc \to [\cc_{\lambda}\op,\cv]$ is fully faithful.

\begin{proposition}\label{prop:reflective}
If $\cc$ is $\lambda$-accessible, with powers and enough cofibrantly-weighted limits, then $N_J\colon \cc \to [\cc_{\lambda}\op,\cv]$ admits a left shrink-adjoint.
\end{proposition}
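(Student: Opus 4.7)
The plan is to deduce the existence of an $\ce$-weak left adjoint by applying Theorem~\ref{thm:adjunction} to the $\cv$-functor $N_J\colon \cc \to [\cc_\lambda\op,\cv]$. Two things must be checked: that $N_J$ preserves powers and enough cofibrantly-weighted limits, and that its underlying functor $(N_J)_0$ satisfies the solution set condition.

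The preservation statement is essentially automatic. By definition $N_J(C)(A)=\cc(JA,C)$, and since representable $\cv$-functors preserve every weighted limit that exists while limits in $[\cc_\lambda\op,\cv]$ are computed pointwise, $N_J$ preserves every weighted limit which exists in $\cc$. In particular it preserves the powers and cofibrantly-weighted limits provided by the hypotheses on $\cc$.

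The real work lies in verifying the solution set condition for $(N_J)_0$. Fix $F\in[\cc_\lambda\op,\cv]_0$. Since $\cc_\lambda$ is a small $\cv$-category and $\cv$ is locally presentable, $[\cc_\lambda\op,\cv]$ is a locally presentable $\cv$-category, so there is a regular cardinal $\mu\ge\lambda$ (depending on $F$) for which $F$ is $\mu$-presentable in the enriched sense; enlarging $\mu$ if necessary we may also arrange that $\cc$ is $\mu$-accessible. Given any $\alpha\colon F\to N_J D$ in $[\cc_\lambda\op,\cv]_0$, express $D=\colim_i D_i$ as a $\mu$-filtered colimit of $\mu$-presentable objects of $\cc$. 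Each $JA$ with $A\in\cc_\lambda$ is $\lambda$-presentable, hence $\mu$-presentable, so $N_J$ preserves this colimit and $N_J D\cong\colim_i N_J D_i$. By $\mu$-presentability of $F$ and of $I\in\cv$, applying $[\cc_\lambda\op,\cv](F,-)$ and then $\cv(I,-)$ yields
\[
    [\cc_\lambda\op,\cv]_0(F,N_J D)\cong\colim_i[\cc_\lambda\op,\cv]_0(F,N_J D_i),
\]
so $\alpha$ factors in the underlying category as $N_J(g)\circ f_i$ for some $i$, some $f_i\colon F\to N_J D_i$, and some $g\colon D_i\to D$ in $\cc_0$. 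Letting $D'$ range over a small skeleton of the $\mu$-presentables of $\cc$, the (small) family
\[
    \{\,f\colon F\to N_J D' \mid f\in[\cc_\lambda\op,\cv]_0(F,N_J D')\,\}
\]
is then a solution set for $F$ with respect to $(N_J)_0$, completing the verification.

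The principal obstacle is the enriched presentability bookkeeping: one must know that $[\cc_\lambda\op,\cv]$ is locally presentable as a $\cv$-category (so every $F$ is $\mu$-presentable for some $\mu$), that $\cc$ may be taken $\mu$-accessible at the resulting $\mu$, and that preservation of $\mu$-filtered colimits by $N_J$ descends to the underlying hom-sets after applying $\cv(I,-)$. These are standard consequences of the conventions gathered in Notation~\ref{notation-lambda}, in particular that $I$ is $\lambda_0$-presentable and that enlarging $\lambda$ preserves the relevant presentability hypotheses.
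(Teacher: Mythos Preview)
Your proof is correct and follows essentially the same strategy as the paper: verify the hypotheses of Theorem~\ref{thm:adjunction} for $N_J$. The only difference is one of packaging. Where you unpack the solution set argument by hand---choosing $\mu\ge\lambda$ so that $F$ is $\mu$-presentable and $\cc$ is $\mu$-accessible, then factoring through a $\mu$-presentable stage of a filtered colimit---the paper instead observes once that $N_J$ preserves $\lambda$-filtered colimits (since each $\ev_A\circ N_J=\cc(JA,-)$ does and the evaluations are jointly conservative), so $(N_J)_0$ is a $\lambda$-accessible functor between $\lambda$-accessible categories and therefore automatically satisfies the solution set condition. Your version is a direct reproof of that standard fact; the paper's is a one-line appeal to it. Conversely, you spell out the limit-preservation step (pointwise limits in the presheaf category plus representables preserving limits), which the paper leaves entirely implicit. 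One small point worth tightening: ``enlarging $\mu$ if necessary we may also arrange that $\cc$ is $\mu$-accessible'' relies on the fact that a $\lambda$-accessible category is $\mu$-accessible not for every $\mu\ge\lambda$ but for arbitrarily large $\mu$ (those with $\lambda\triangleleft\mu$); this is indeed enough, but you might say so.
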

\begin{proof}
  The composite of $N_J$ with the evaluation functor $\ev_{X}$ at a
$\lambda$-presentable object $X$ is the representable $\cc(JX,-)$.
Each such $\cv$-functor preserves $\lambda$-filtered colimits; so, since the evaluation functors are jointly
conservative, $N_J$ also preserves $\lambda$-filtered colimits. 
Now $[\cc_{\lambda}\op,\cv]$ is locally $\lambda$-presentable as a
$\cv$-category by \cite[Examples 3.4]{Kelly1982Structures}.  In particular, since $(N_J)_0\colon \cc_0 \to [\cc_{\lambda}\op,\cv]_0$ is a $\lambda$-filtered colimit preserving functor between $\lambda$-accessible categories, it satisfies the solution set condition.  The result now follows from our adjoint functor theorem, Theorem~\ref{thm:adjunction}.
\end{proof}

\begin{theorem}\label{thm:colimits}
Let $\cc$ be an accessible $\cv$-category with powers and enough cofibrantly-weighted limits.  Then $\cc$ admits all shrink-colimits.
\end{theorem}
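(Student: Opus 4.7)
The plan is to exploit the fully faithful embedding of an accessible $\cv$-category into a cocomplete presheaf $\cv$-category, together with the $\ce$-weak left adjoint produced in Proposition~\ref{prop:reflective}. The key observation is that, with these in hand, ordinary weighted colimits in the presheaf $\cv$-category can be transported along the $\ce$-weak reflection to yield $\ce$-weak colimits in $\cc$ --- much as in the classical argument that reflective subcategories of cocomplete categories are cocomplete.

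First I would choose a regular cardinal $\lambda$ for which $\cc$ is $\lambda$-accessible and let $J\colon \cc_{\lambda}\to\cc$ be the dense inclusion of a skeletal small full sub-$\cv$-category of $\lambda$-presentable objects; density of $J$ ensures that the nerve $N_J\colon \cc \to [\cc_{\lambda}\op,\cv]$ is fully faithful. Since $\cc$ has powers and enough cofibrantly-weighted limits, Proposition~\ref{prop:reflective} supplies an $\ce$-weak left adjoint $L$ to $N_J$, with weak unit $\eta_F\colon F\to N_J L F$ whose associated comparison map belongs to $\ce$.

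Given now a diagram $S\colon\cd\to\cc$ with small domain and a weight $W\colon\cd\op\to\cv$, I would form the ordinary weighted colimit $X := W * (N_J S)$ in the cocomplete presheaf $\cv$-category $[\cc_{\lambda}\op,\cv]$, and set $C := LX \in \cc$. For each $A\in\cc$, the universal property of $X$ together with the fully faithfulness of $N_J$ gives a chain of natural isomorphisms
\[
[\cc_{\lambda}\op,\cv](X, N_J A) \;\cong\; [\cd\op,\cv]\bigl(W,\,[\cc_{\lambda}\op,\cv](N_J S-, N_J A)\bigr) \;\cong\; [\cd\op,\cv](W, \cc(S-, A)).
\]
Precomposing with the $\ce$-weak reflection map $\cc(C,A) \to [\cc_{\lambda}\op,\cv](X, N_J A)$ and using that $\ce$ is closed under composition with isomorphisms, I obtain a morphism $\cc(C,A) \to [\cd\op,\cv](W, \cc(S-, A))$ that lies in $\ce$ for every $A$. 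Specialising to $A=C$ and evaluating at the identity extracts the $\cv$-natural transformation $\eta'\colon W\to\cc(S-,C)$ which exhibits $C$ as the $\ce$-weak colimit.

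I do not anticipate any serious obstacle: once Proposition~\ref{prop:reflective} is available, the structural part of the argument is the standard transport of colimits across a (now $\ce$-weak) reflection. The only step that requires a little care is the routine diagram chase verifying that the morphism in $\ce$ produced above coincides with the canonical comparison map~\eqref{eq:weakcolimit} attached to $\eta'$; this follows directly from the naturality of the weighted-colimit bijection in $[\cc_{\lambda}\op,\cv]$ and the Yoneda-style isomorphism induced by fully faithfulness of $N_J$.
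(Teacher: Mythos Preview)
Your proposal is correct and follows essentially the same approach as the paper: both exploit Proposition~\ref{prop:reflective} to obtain an $\ce$-weak left adjoint to $N_J$, form the genuine weighted colimit $W*(N_JS)$ in the presheaf $\cv$-category, and then reflect back into $\cc$. The paper packages this slightly differently, factoring $N_S\cong N_{N_JS}\circ N_J$ and observing that $N_{N_JS}$ has the genuine left adjoint $N_JS\star-$ while $N_J$ has an $\ce$-weak one, so that their composite $N_S$ has an $\ce$-weak left adjoint whose value at $W$ is precisely your $L(W*(N_JS))$; but this is only a presentational difference, and your explicit chain of isomorphisms is exactly what verifies that the composite $\ce$-weak adjunction yields the required comparison map.
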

\begin{proof}
Given Proposition~\ref{prop:reflective}, this follows directly from
Proposition 4.3 of \cite{LackRosicky2012}, the argument of which we
repeat here for convenience.  Let $D\colon \ca \to \cc$ with $\ca$
small.  We must show that $N_D\colon \cc \to [\ca\op,\cv]$ has a
left shrink-adjoint; then the value of this adjoint at
$W\in[\ca\op,\cv]$ will give the shrink-colimit of $D$ weighted by
$W$.  First suppose that $\cc$ is $\lambda$-accessible and consider the dense inclusion $J\colon \cc_{\lambda} \to \cc$.  We then have the following diagram.
\begin{equation*}
\xymatrix{
\ca \ar[r]^{D} & \cc \ar[drr]_{N_D} \ar[rr]^{N_{J}} && [\cc_{\lambda}\op,\cv] \ar[d]^{N_{(N_{J}D)} = [\cc_{\lambda}\op,\cv](N_JD-,1)} \\
&&& [\ca\op,\cv]
}
\end{equation*}
Since $N_J$ is fully faithful, there are natural isomorphisms
\[ N_{N_JD} N_J C \cong [\cc_{\lambda}\op,\cv](N_J D-,N_JC) \cong \cc(D-,C)
  \cong N_D C \]
and so the triangle commutes up to isomorphism. Thus it will suffice
to show that $N_J$ and $N_{N_JD}$ have left
shrink-adjoints. 
The first of these does so by Proposition~\ref{prop:reflective}, while
the second has a genuine left
adjoint, namely the weighted colimit functor ${N_J}D \star -$. 
\end{proof}

We follow \cite{LackRosicky2012} in saying that $W$-weighted limits
are \emph{$\ce$-stable} if the enriched full subcategory $\ce
\hookrightarrow \cv^{\atwo}$ is closed under $W$-weighted limits.
Similarly, we say that {\em enough cofibrantly-weighted limits are
  $\ce$-stable} if \ce has, and the inclusion $\ce\to\cv^{\atwo}$
preserves, enough cofibrantly-weighted limits. In that case we can use results
from \cite{LackRosicky2012} to prove a converse to the previous
theorem.  

\begin{theorem}\label{thm:colimits2}
Suppose that the \dsdr\ morphisms in $\cv$ are closed in $\cv^{\atwo}$ under enough cofibrantly-weighted limits, and let $\cc$ be an accessible $\cv$-category.  Then the following are equivalent.
\begin{enumerate}[(i)]
\item $\cc$ admits $\ce$-stable limits.
\item $\cc$ admits powers and enough cofibrantly-weighted limits.
\item $\cc$ admits shrink-colimits.
\end{enumerate}
\end{theorem}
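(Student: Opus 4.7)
The plan is to run the cycle (i) $\Rightarrow$ (ii) $\Rightarrow$ (iii) $\Rightarrow$ (i), with the middle step being exactly Theorem~\ref{thm:colimits}.

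For (i) $\Rightarrow$ (ii) I would verify $\ce$-stability of powers and of enough cofibrantly-weighted limits, so that (i) forces their existence in $\cc$. Powers are $\ce$-stable because every $\cv$-functor preserves shrinkable morphisms (Section~\ref{sect:dsdr}), so in particular each internal hom $[X,-]\colon\cv\to\cv$ carries $\ce$ into itself, and this is exactly the closure of $\ce\subseteq\cv^{\atwo}$ under powers by $X$. That enough cofibrantly-weighted limits are $\ce$-stable is precisely the standing hypothesis of the theorem, so $\cc$ must admit them.

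The content of the theorem is the remaining implication (iii) $\Rightarrow$ (i), for which I would appeal to the framework of \cite{LackRosicky2012}. The picture is as follows: having all $\ce$-weak colimits in $\cc$ immediately provides an $\ce$-weak left adjoint $L$ to the nerve $N_J\colon\cc\to[\cc_\lambda\op,\cv]$, since applying the defining universal property of $\ce$-weak colimits to the diagram $D=J$ gives exactly this adjoint. Given then an $\ce$-stable weight $W\colon\ca\to\cv$ and a small diagram $D\colon\ca\to\cc$, the candidate for the $W$-weighted limit of $D$ in $\cc$ is $L\{W,N_J D\}$, where $\{W,N_J D\}$ is formed in the cocomplete presheaf $\cv$-category $[\cc_\lambda\op,\cv]$. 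The substance is to verify that this candidate satisfies the strict universal property of $\{W,D\}$ --- that the resulting reflection is honest, not merely $\ce$-weak --- and it is here that the hypothesis of $\ce$-stability of $W$ enters in an essential way.

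The principal obstacle will be that final verification. I expect the argument to be a Yoneda-style manipulation: the fully-faithfulness of $N_J$ together with the closure of $\ce\subseteq\cv^{\atwo}$ under $W$-weighted limits should force the $\ce$-weak comparison map to lie in $\ce$ between things related by an identity on the relevant representables, and hence to be strict. This is precisely the kind of passage from $\ce$-weakness to strictness exploited in the results of \cite{LackRosicky2012} to which the paper alludes.
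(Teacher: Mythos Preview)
Your proposal follows exactly the cycle the paper uses, and the arguments for (i)~$\Rightarrow$~(ii) and (ii)~$\Rightarrow$~(iii) match the paper's verbatim. For (iii)~$\Rightarrow$~(i), both you and the paper construct the $\ce$-weak left adjoint to the fully faithful $N_J\colon\cc\hookrightarrow[\cc_\lambda\op,\cv]$ from the $\ce$-weak colimits, and then appeal to \cite{LackRosicky2012} to conclude closure of $\cc$ under $\ce$-stable limits.

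There is one ingredient the paper makes explicit that you leave out: the results of \cite{LackRosicky2012} being invoked (their Propositions~2.6 and~6.1) require that the unit $I$ be \emph{$\ce$-projective}, meaning $\cv_0(I,-)$ sends morphisms of $\ce$ to surjections. The paper checks this by observing that every shrinkable morphism is a split epimorphism. Your ``Yoneda-style manipulation'' sketch is too vague to see where this hypothesis would enter, and without it the passage from $\ce$-weak reflection to strict closure does not go through: $\ce$-stability of $W$ alone does not force the unit $\{W,N_JD\}\to N_J L\{W,N_JD\}$ to be invertible. So while your outline is correct, you should make the $\ce$-projectivity of $I$ explicit before invoking the Lack--Rosick\'y machinery.
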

\begin{proof}
Since \dsdr\ morphisms are always closed under powers, powers are
$\ce$-stable.  Since also enough cofibrantly-weighted limits are
$\ce$-stable by assumption, $(i) \implies (ii)$.  The
implication $(ii) \implies (iii)$ holds by
Theorem~\ref{thm:colimits}. For $(iii)\implies (i)$, suppose that $\cc$ is $\lambda$-accessible and consider
the dense inclusion $J\colon \cc_{\lambda} \to \cc$.  Then $N_J\colon \cc
\hookrightarrow [\cc\op_{\lambda},\cv]$ is fully faithful and has
 a left shrink-adjoint sending $W$ to the shrink-colimit of $J$
weighted by $W$.  Furthermore, since each morphism of $\ce$ is a split
epimorphism, the representable $\cv(I,-)\colon\cv_0 \to \Set$ sends
morphisms of $\ce$ to surjections: in the language of
\cite{LackRosicky2012} this says that $I$ is \emph{$\ce$-projective}.
Using Propositions~6.1 and~2.6 of \cite{LackRosicky2012} we deduce that $\cc$ is closed in $[\cc\op_{\lambda},\cv]$ under $\ce$-stable limits, as required.
\end{proof}

\subsection{Recognising accessible enriched categories}

Examples of accessible $\cv$-categories $\cc$ are much easier to
identify in the case that $\cc$ admits powers by a suitable strong
generator $\cg$ of $\cv_0$. As observed in
\cite[Section~3.8]{Kelly1982Basic}, if the $\cv$-category $\cc$ has
powers, the difference between conical colimits in $\cc$ and conical
colimits in the underlying category $\cc_0$ disappears. But since the
map $\cc(\colim_i D_i,C)\to \lim_i \cc(D_i,C)$ expressing the
universal property of a conical colimit will be invertible if and
only if the induced $\cv_0(G,\cc(\colim_iD_i,C))\to
\cv_0(G,\lim_i\cc(D_i,C))$ is so for each $G\in\cg$,
it suffices for $\cc$ to have powers by objects in $\cg$. 
Building on this well-known argument,
the following proposition enables us to recognise accessible
$\cv$-categories by looking at their underlying categories.

Recall that $\lambda_0$ satisfies the standing assumptions of
Notation~\ref{notation-lambda}.

\begin{proposition}\label{prop:equivalent}
  Let $\cv_0$ be locally presentable, let $\cg$ be a strong generator
  of $\cv_0$, and let $\lambda\ge\lambda_0$ be a regular cardinal such
  that each $G\in\cg$ is $\lambda$-presentable in $\cv_0$.  Then for
  any $\cv$-category $\cc$ with powers by objects in $\cg$, the
  following are equivalent:
  \begin{enumerate}
  \item $\cc$ is $\lambda$-accessible as a $\cv$-category;
  \item $\cc_0$ is $\lambda$-accessible and
    $\cc(C,-)_0\colon \cc_0\to\cv_0$ is a $\lambda$-accessible
    functor, for each $C$ in some strong generator of $\cc_0$;
  \item $\cc_0$ is $\lambda$-accessible and
    $(G\pitchfork -)_0\colon \cc_0\to\cc_0$ is a $\lambda$-accessible
    functor, for each $G\in\cg$.
  \end{enumerate}
  Moreover, for such $\lambda$ and $\cc$, an object $C\in\cc$ is
  $\lambda$-presentable if and only if it is $\lambda$-presentable in
  $\cc_0$.
\end{proposition}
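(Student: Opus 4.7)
The plan is to exploit the adjoint-type formula
\[ \cv_0(G,\cc(D,C)) \;\cong\; \cc_0(D, G\pitchfork C) \]
valid for $G\in\cg$, which is the bridge between enriched and ordinary data. Together with the strong generator property of $\cg$ (invertibility of a morphism in $\cv$ is detected by applying $\cv_0(G,-)$ for $G\in\cg$), this formula allows both the transport of conical colimits and the comparison of presentability in the two settings. A preliminary observation, used throughout, is that under our assumption on $\cc$ having powers by $\cg$, a conical $\lambda$-filtered colimit in $\cc_0$ automatically lifts to a conical colimit in the enriched sense: for if $\colim D_i$ is such a colimit in $\cc_0$, then for each $G\in\cg$ and $C\in\cc$ we have $\cv_0(G,\cc(\colim D_i,C))\cong \cc_0(\colim D_i, G\pitchfork C) \cong \lim\cc_0(D_i, G\pitchfork C)\cong \lim\cv_0(G,\cc(D_i,C))\cong \cv_0(G,\lim\cc(D_i,C))$, so the canonical map $\cc(\colim D_i, C)\to\lim\cc(D_i,C)$ is invertible.

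For $(1)\Rightarrow(2)$, assume $\cc$ is $\lambda$-accessible as a $\cv$-category, with $J\colon\cc_\lambda\to\cc$ the inclusion of $\lambda$-presentables. Since $\lambda$-filtered enriched colimits are in particular colimits in $\cc_0$, and since $I$ is $\lambda$-presentable in $\cv_0$ (by $\lambda\ge\lambda_0$), applying $\cv_0(I,-)$ to $\cc(C,\colim D_i)\cong\colim\cc(C,D_i)$ shows $\cc_0(C,-)$ preserves $\lambda$-filtered colimits for each $C\in\cc_\lambda$; density of $J$ ensures $\{JC:C\in\cc_\lambda\}$ is a strong generator of $\cc_0$, so $\cc_0$ is $\lambda$-accessible and (2) holds. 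For $(2)\Rightarrow(3)$: given a strong generator of $\cc_0$ consisting of objects $C$ with $\cc(C,-)_0$ $\lambda$-accessible, each composite $\cc_0(C, G\pitchfork -)\cong\cv_0(G,\cc(C,-))$ preserves $\lambda$-filtered colimits (since $G$ is $\lambda$-presentable in $\cv_0$), and since the family $\{\cc_0(C,-)\}$ reflects isomorphisms, $(G\pitchfork -)_0$ is $\lambda$-accessible.

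For $(3)\Rightarrow(1)$, the preliminary observation already gives $\cc$ conical $\lambda$-filtered colimits (computed in $\cc_0$). Take a set $\cg'$ of $\lambda$-presentable generators of $\cc_0$; it remains to show each $A\in\cg'$ is $\lambda$-presentable \emph{as} a $\cv$-object, i.e.\ that $\cc(A,-)$ preserves $\lambda$-filtered colimits in $\cv$. Invertibility of $\cc(A,\colim D_i)\to\colim\cc(A,D_i)$ is, by the strong generator property of $\cg$, tested after $\cv_0(G,-)$; the left side becomes $\cc_0(A, G\pitchfork\colim D_i)\cong\cc_0(A,\colim G\pitchfork D_i)\cong\colim\cc_0(A, G\pitchfork D_i)$ using the accessibility of $(G\pitchfork -)_0$ from (3) together with $\lambda$-presentability of $A$ in $\cc_0$, while the right side becomes $\colim\cv_0(G,\cc(A,D_i))$ by $\lambda$-presentability of $G$ in $\cv_0$; these two filtered colimits of sets agree under the translation formula. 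Since every object of $\cc_0$ is a $\lambda$-filtered colimit of objects of $\cg'$, and this colimit is also enriched conical, we obtain $\lambda$-accessibility of $\cc$.

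The ``moreover'' clause follows from the same arguments: the $(\Leftarrow)$ implication is precisely what was verified in the paragraph above; the $(\Rightarrow)$ implication uses that $\cc_0(C,-)\cong\cv_0(I,\cc(C,-))$ is a composite of $\lambda$-accessible functors once $\cc(C,-)$ preserves $\lambda$-filtered enriched colimits. I do not anticipate a single hard step; the main obstacle is rather organisational, namely keeping straight, at each stage, whether a colimit is being formed in $\cc_0$, in $\cc$ (conical, enriched), or in $\cv$, and checking at each translation that the relevant diagram is genuinely $\lambda$-filtered and that the presentability hypothesis applied (on $I$, on $G\in\cg$, on $A$, or on $C$) is the right one.
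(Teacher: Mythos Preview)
Your argument is essentially the paper's, organised around the square
\[
\xymatrix{
\cc_0 \ar[rr]^{(G\pitchfork-)_0}\ar[d]_{\cc(C,-)_0} && \cc_0\ar[d]^{\cc_0(C,-)}\\
\cv_0\ar[rr]_{\cv_0(G,-)} && \Set
}
\]
together with the observation that powers by $\cg$ make $\lambda$-filtered colimits in $\cc_0$ agree with enriched conical ones.

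There is one small slip in your $(2)\Rightarrow(3)$. From ``each $\cc_0(C,G\pitchfork-)$ preserves $\lambda$-filtered colimits'' and ``the family $\{\cc_0(C,-)\}$ reflects isomorphisms'' you cannot yet conclude that $(G\pitchfork-)_0$ preserves $\lambda$-filtered colimits: applying $\cc_0(C,-)$ to the comparison map $\colim(G\pitchfork D_i)\to G\pitchfork\colim D_i$ only yields the comparison map for the composite once you know that $\cc_0(C,-)$ itself \emph{preserves} the $\lambda$-filtered colimit $\colim(G\pitchfork D_i)$. The missing ingredient is that each $C$ in your strong generator is $\lambda$-presentable in $\cc_0$; this follows because the hypothesis $\cc(C,-)_0$ $\lambda$-accessible says exactly that $C$ is $\lambda$-presentable in $\cc$, and then your ``moreover $(\Rightarrow)$'' argument (composing with $\cv_0(I,-)$) gives $\lambda$-presentability in $\cc_0$. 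The paper makes this explicit by saying that the $\cc_0(C,-)$ ``preserve and jointly reflect'' $\lambda$-filtered colimits. With this one addition your proof is complete and matches the paper's.
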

\begin{proof}
  Because of the powers, $\cc$ has $\lambda$-filtered colimits if and
  only if $\cc_0$ does so, and a class $\ch$ of objects of $\cc$
  generates $\cc$ under $\lambda$-filtered colimits if and only if it
  generates $\cc_0$ under $\lambda$-filtered colimits.  So the only
  possible difference between $\lambda$-accessibility of $\cc$ and
  $\lambda$-accessibility of $\cc_0$ lies in the possible difference
  between $\lambda$-presentability in $\cc$ and
  $\lambda$-presentability in $\cc_0$.

  Since $I$ is $\lambda$-presentable in $\cv_0$, if $C$ is
  $\lambda$-presentable in $\cc$ then the composite
  \[ \xymatrix @R2pc @C3pc { \cc_0 \ar[r]^-{\cc(C,-)_0} & \cv_0
      \ar[r]^-{\cv_0(I,-)} & \Set } \] preserves $\lambda$-filtered
  colimits, but this composite is $\cc_0(C,-)$, and so $C$ is
  $\lambda$-presentable in $\cc_0$. Thus
  $(\cc_\lambda)_0\subseteq(\cc_0)_\lambda$; while if $\cc$ is
  $\lambda$-accessible as a $\cv$-category, then $\cc_0$ is an
  accessible ordinary category, and (1) implies (2).

  For an arbitrary $\cg$-powered $\cv$-category $\cc$, however, a
  $\lambda$-presentable object of $\cc_0$ might fail to be
  $\lambda$-presentable in $\cc$.

  For the remaining implications, consider the diagram
  \begin{equation}
    \label{eq:G-powers}
    \xymatrix  @R2pc @C2pc{
      \cc_0 \ar[rr]^{(G\pitchfork-)_0} \ar[d]_{\cc(C,-)_0} && \cc_0
      \ar[d]^{\cc_0(C,-)} \\
      \cv_0 \ar[rr]_{\cv_0(G,-)} && \Set 
    }  
  \end{equation}
  in which $C\in\cc$, $G\in\cg$, and so the lower horizontal preserves
  $\lambda$-filtered colimits.

  Suppose that (2) holds, so that there is a strong generator $\ch$
  for $\cc_0$ consisting of objects which are $\lambda$-presentable in
  \cc (and so also in $\cc_0$). As $C$ ranges through $\ch$, the lower
  composite $\cv_0(G,\cc(C,-)_0)$ in \eqref{eq:G-powers} preserves
  $\lambda$-filtered colimits, while the $\cc_0(C,-)$ preserve and
  jointly reflect them. Thus $(G\pitchfork-)_0$ preserves them and (3)
  holds.

  Now suppose that (3) holds. If $C$ is $\lambda$-presentable in
  $\cc_0$, then the upper composite $\cc_0(C,G\pitchfork-)$
  in~\eqref{eq:G-powers} preserves $\lambda$-filtered colimits, while
  the $\cv_0(G,-)$ preserve and jointly reflect them, thus
  $\cc(C,-)_0$ also preserves them, and $C$ is $\lambda$-presentable
  in $\cc$. Thus $(\cc_0)_\lambda\subseteq (\cc_\lambda)_0$ and (1)
  follows.
\end{proof}

In practice, it is often convenient to lift enriched accessibility
from one (typically locally presentable) $\cv$-category to another
using the following.

\begin{corollary}\label{cor:recognition}
  Let $\ca$ and $\cc$ be $\cv$-categories with powers by objects in
  $\cg$ and let $U\colon \ca \to \cc$ be a \emph{conservative}
  $\cv$-functor preserving them.  Suppose that $\cc$ is an accessible
  $\cv$-category, $\ca_0$ is an accessible category, and
  $U_0\colon\ca_0 \to \cc_0$ is an accessible functor.  Then in fact
  $\ca$ is accessible as a $\cv$-category.
\end{corollary}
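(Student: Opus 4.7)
The plan is to apply Proposition~\ref{prop:equivalent} to $\ca$, for which we need a suitable regular cardinal $\lambda$ such that $\ca_0$ is $\lambda$-accessible and each power functor $(G\pitchfork -)_0\colon\ca_0\to\ca_0$ for $G\in\cg$ is $\lambda$-accessible. The idea is to transfer these conditions from $\cc$ across $U$, using the conservativity of $U_0$ to detect $\lambda$-filtered colimits, and using preservation of powers to compare the power functors on $\ca_0$ and on $\cc_0$.

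First I would choose a regular cardinal $\lambda\ge\lambda_0$ large enough that simultaneously: (i) $\cc$ is $\lambda$-accessible as a $\cv$-category; (ii) $\ca_0$ is $\lambda$-accessible as an ordinary category; (iii) $U_0\colon\ca_0\to\cc_0$ is a $\lambda$-accessible functor; and (iv) each $G\in\cg$ is $\lambda$-presentable in $\cv_0$. Such a $\lambda$ exists by the standard sharpness-and-smallness results for accessible categories and accessible functors, together with the standing hypothesis that $\cv_0$ is locally presentable with a strong generator $\cg$ consisting of presentable objects. From (i) and Proposition~\ref{prop:equivalent} applied to $\cc$ we get, for every $G\in\cg$, that $(G\pitchfork-)_0\colon\cc_0\to\cc_0$ preserves $\lambda$-filtered colimits.

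The main step is then to show that $(G\pitchfork-)_0\colon\ca_0\to\ca_0$ preserves $\lambda$-filtered colimits for each $G\in\cg$. Given a $\lambda$-filtered diagram $D\colon\ch\to\ca_0$ with colimit $A$, form the colimit of $G\pitchfork D$ in $\ca_0$ (which exists by (ii)) and consider the canonical comparison
\[
c\colon \colim_{h}(G\pitchfork Dh)\longrightarrow G\pitchfork A.
\]
Applying $U_0$ and using the natural isomorphism $U_0(G\pitchfork -)\cong G\pitchfork U_0(-)$ coming from preservation of $\cg$-powers, together with $\lambda$-accessibility of $U_0$, the map $U_0c$ is identified with the comparison
\[
\colim_h\bigl(G\pitchfork U_0Dh\bigr)\longrightarrow G\pitchfork\colim_h U_0Dh,
\]
which is invertible by the corresponding property of $\cc$. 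Since $U$ is conservative its underlying functor $U_0$ reflects isomorphisms, so $c$ itself is invertible, as required.

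Having verified both clauses of condition~(3) of Proposition~\ref{prop:equivalent} for $\ca$ at this $\lambda$, the implication $(3)\Rightarrow(1)$ of that proposition gives that $\ca$ is $\lambda$-accessible as a $\cv$-category, hence accessible. The principal obstacle is the coherent choice of $\lambda$ in the first step: one needs the same $\lambda$ to witness accessibility of $\cc$ as a $\cv$-category, of $\ca_0$, and of $U_0$, and to witness $\lambda$-presentability of the generators $G\in\cg$; this is handled by invoking the standard facts that once a functor or category is accessible it is $\mu$-accessible for all sufficiently large regular $\mu$, and intersecting the corresponding cofinal classes of cardinals.
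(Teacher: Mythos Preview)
Your proof is correct and follows essentially the same approach as the paper: both reflect $\lambda$-accessibility of $(G\pitchfork-)_0\colon\ca_0\to\ca_0$ from that of $(G\pitchfork-)_0\colon\cc_0\to\cc_0$ across the conservative $U_0$ via the commuting square $U_0(G\pitchfork-)\cong(G\pitchfork-)U_0$, and then invoke Proposition~\ref{prop:equivalent}. The paper handles the common choice of $\lambda$ by citing the uniformization theorem \cite[Theorem~2.19]{Adamek1994Locally}; your final paragraph is gesturing at the same thing, though note that ``$\mu$-accessible for all sufficiently large regular $\mu$'' is not literally true---one only gets a class cofinal for the sharply-less-than relation $\triangleleft$---but intersecting finitely many such classes does give what you need.
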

\begin{proof}
  Consider the following commutative diagram
  \[ \xymatrix @R2pc @C2pc {
      \ca_0 \ar[rr]^-{(G\pitchfork-)_0} \ar[d]_{U_0} && \ca_0 \ar[d]^{U_0} \\
      \cc_0 \ar[rr]_{(G\pitchfork-)_0} && \cc_0 } \]
  where $G\in\cg$.

  The categories $\ca_0$ and $\cc_0$ are both accessible,
$U_0$ is accessible by assumption, and the lower horizontal by
Proposition~\ref{prop:equivalent} and the fact that \cc is an
accessible $\cv$-category.  By the uniformization theorem \cite[Theorem~2.19]{Adamek1994Locally} for
accessible categories, we can choose $\lambda\ge\lambda_0$ such that each
of the aforementioned categories and functors is
$\lambda$-accessible.
Since $U_0$ is conservative, the upper horizontal is also $\lambda$-accessible.
Therefore $\ca$ is a $\lambda$-accessible $\cv$-category by
Proposition~\ref{prop:equivalent} once again.
\end{proof}

\section{Examples and applications}\label{sect:applications}
 
To summarise, we have two main results: an adjoint functor theorem,
and a shrink-cocompleteness theorem for accessible $\cv$-categories
with sufficient limits. 
In this final section we illustrate the scope of these results by describing what they capture in our various settings, with a particular emphasis on $2$-categories and $\infty$-cosmoi.

\subsection{The classical case}
In the case of $\Set$ with the trivial model structure, a category
$\ca$ has powers and enough cofibrantly-weighted limits just when it
is complete. The \dsdr\
morphisms in $\Set$ are the bijections, and these are of course closed
in $\Set^\two$ under (all) limits. Therefore our weak adjoint functor
theorem specialises exactly to the general adjoint functor theorem of
Freyd \cite{CWM}. Theorem~\ref{thm:colimits2} becomes the well-known
result that an accessible category is complete if and only if it is 
cocomplete: see for example
\cite[Corollary~2.47]{Adamek1994Locally}. Since preservation of homotopy limits is just preservation of limits in this case, Theorem~\ref{thm:preservation} is just the elementary fact that right adjoints preserve limits. 

For general $\cv$ equipped with the trivial model structure, a
$\cv$-category $\ca$ has powers and enough cofibrantly-weighted limits
just when it is complete, now in the sense of weighted limits, and the
\dsdr\  morphisms are again the isomorphisms.
Therefore our adjoint functor theorem specialises exactly to the adjoint functor
theorem for enriched categories. Theorem~\ref{thm:preservation} says that right adjoints preserve limits, and Theorem~\ref{thm:colimits2} yields the well-known result that an
accessible $\cv$-category is complete if and only if it is cocomplete.

\subsection{Ordinary weakness}
In the case of $\Set$ with the split model structure we know from
Example ~\ref{ex:split-enough} that for a category $\ca$ to have enough cofibrantly-weighted limits it suffices that it admit products.  Furthermore the \dsdr\
morphisms are the surjections, and these are closed under
products.
Since powers are products in the $\Set$-enriched setting our adjoint functor theorem thus yields
the weak adjoint functor theorem of Kainen \cite{Kainen1971Weak}.
Theorem~\ref{thm:colimits2} yields the well known result that an
accessible category has products if and only if it has weak colimits: see for example
\cite[Theorem~4.11]{Adamek1994Locally}.

For general $\cv$ equipped with the split model structure, our results
appear to be completely new.
In this setting a $\cv$-category $\ca$ has powers and enough cofibrantly-weighted limits
provided that it has powers and products, and the \dsdr\  morphisms
are the split epimorphisms, which are of course closed under
powers and products. Using this, our main results (in slightly weakened form) are:
\begin{itemize}
\item  Let $\cb$ be a \cv-category with products and powers, and let
  $U\colon\cb\to\ca$ be a \cv-functor that preserves them. Then $U$
  has a (split epi)-weak left adjoint if and only if it satisfies the solution
  set condition.
\item An accessible $\cv$-category has products and powers if and only
  if  it has (split epi)-weak colimits.
\end{itemize}

\subsection{$2$-category theory}
One of our guiding topics is that of $2$-category theory, understood
as $\Cat$-enriched category theory, and  where $\Cat$ is equipped with the canonical model structure.  

As recalled in Example~\ref{ex:enough-Cat}, in this case the
cofibrantly-weighted limits are precisely the flexible limits of
\cite{Bird1989Flexible}, and any 2-category with flexible limits has
powers.
The \dsdr\  morphisms are the retract
equivalences, and these are closed in $\Cat^\two$ under
cofibrantly-weighted limits as observed, for example, in
\cite[Section~9]{LackRosicky2012}.

Our primary applications will be in the accessible setting which we
describe now.
Since each accessible functor between accessible categories satisfies
the solution set condition,
our adjoint functor theorem in this setting gives the first part of the
following,
the second part of which is the instantiation of (part of) Theorem~\ref{thm:colimits2}.

\begin{itemize}
\item  Let $U\colon\cb\to\ca$ be an accessible $2$-functor between accessible $2$-categories.  If $\cb$ has flexible limits and $U$ preserves them then it has a left shrink-adjoint.
\item  An accessible $2$-category has flexible limits if and only if it has shrink-colimits.
\end{itemize}
The first result is new; the second is part of
Theorem 9.4 of \cite{LackRosicky2012}.  The
utility of these results lies in the fact that many, though not all,
$2$-categories of pseudomorphisms are in fact accessible with flexible
limits.  For instance, the $2$-category of monoidal categories and
strong monoidal functors is accessible (although, as recalled in Section~\ref{sect:initial}, the $2$-category of
\emph{strict} monoidal categories and strong monoidal functors is
not \cite[Section~6.2]{Bourke2019Accessible}).  The difference
between the two cases is that the definition of strict monoidal
category involves equations between objects whereas that of monoidal
category does not --- in terms of the associated $2$-monads this
corresponds to the fact that the $2$-monad for strict monoidal
categories is not cofibrant (=flexible) whereas that for monoidal
categories is. 
One result generalising this is Corollary~7.3 of \cite{Bourke2019Accessible}, which asserts that
if $T$ is a finitary flexible $2$-monad on $\Cat$ then the
$2$-category $\TAlg_p$ of strict algebras and pseudomorphisms is
accessible with flexible limits and filtered colimits, and these are
preserved by the forgetful functor to $\Cat$.  It now follows from our
theorem that any such $2$-category admits shrink-colimits, and in
particular bicolimits, and also that if $f\colon S \to T$ is a
morphism of such $2$-monads, then the induced map $\TAlg_p \to
\SAlg_p$ has a left shrink-adjoint, and so a left biadjoint.
These results concerning bicolimits and biadjoints are not new, being special cases of the results of Section 5 of \cite{Blackwell1989Two}.  

However, as shown in \cite[Section~6.5]{Bourke2019Accessible}, many structures
beyond the scope of two-dimensional monad theory are also accessible.
For instance, the $2$-category of small regular categories and regular
functors is accessible with flexible limits and filtered colimits preserved by the forgetful $2$-functor to $\Cat$.
Similar results hold for Barr-exact categories, coherent categories,
distributive categories, and so on.
It follows from the above theorems that the $2$-categories of such
structures admit shrink-colimits, and so bicolimits,
and this is the simplest proof of bicocompleteness in these examples
that we know.
Again, for such structures the adjoint functor theorem provides
an easy technique for constructing left shrink-adjoints, 
and so biadjoints, to forgetful 2-functors between them.

Theorem~\ref{thm:preservation} asserts that a $2$-functor with a left shrink-adjoint preserves bilimits --- since such $2$-functors are right biadjoints, this is a special case of the well known fact that right biadjoints preserve bilimits.

\subsection{Simplicial enrichment and \SSet-accessible $\infty$-cosmoi}
Our second main motivation concerns the $\infty$-cosmoi of Riehl and
Verity \cite{Riehl2019Elements}.  Here the base for enrichment is
$\SSet$ equipped with the Joyal model structure.  In this setting, for
a simplicially enriched category $\cb$ to admit powers and enough cofibrantly-weighted limits it suffices, as observed in Example~\ref{ex:enough-SSet}, that it admit flexible limits in the sense of
\cite{Riehl2019Elements}.  As observed in Example~\ref{ex:shrinking},
every \dsdr\  morphism is a weak equivalence which has a section.

In this example it can be shown that the \dsdr\ morphisms are {\em not}
closed in $\SSet^\two$ under cofibrantly-weighted limits, so we have
to content ourselves with Theorem~\ref{thm:colimits} rather than
Theorem~\ref{thm:colimits2}. On the other hand, enriched projective
model structures do exist, so Theorem~\ref{thm:preservation} does
imply that $\cv$-functors with left shrink-adjoints preserve cofibrantly-weighted homotopy limits.

 Specialising our main results, we then obtain:
\begin{itemize}
\item  Let \cb be a $\SSet$-category with flexible limits, and let
  $U\colon\cb\to\ca$ be a \cv-functor which preserves them. Then $U$
  has a left shrink-adjoint provided that it satisfies the solution
  set condition.
\item If $\ca$ is an accessible $\SSet$-category with flexible limits then it has shrink-colimits.
\end{itemize}

We wish to examine these results further in the setting of
$\infty$-cosmoi.  By definition, an $\infty$-cosmos is a simplicially
enriched category $\ca$ equipped with a class of morphisms called
isofibrations satisfying a number of axioms --- see Chapter 1 of \cite{Riehl2019Elements}.  For the purposes of the present paper, the reader need only know
that the homs of an $\infty$-cosmos are quasicategories and that it admits flexible limits, in the sense of Examples~\ref{ex:enough-SSet}.

The idea is that the
objects of an $\infty$-cosmos are $\infty$-categories, broadly
interpreted, and that the axioms for an $\infty$-cosmos provide what
is needed to define and work with key structures --- such as limits
and adjoints --- that arise in the $\infty$-categorical world.
For instance one can define what it means for an object of an
$\infty$-cosmos to have limits.  This is model-independent,
in the sense that one recovers the usual notion of quasicategory with limits or complete Segal space with limits on choosing the appropriate $\infty$-cosmos.

The natural structure preserving morphisms between $\infty$-cosmoi are
called {\em cosmological functors}, and cosmological functors preserve flexible limits.  
By a {\em \SSet-accessible $\infty$-cosmos}, we mean an $\infty$-cosmos
which is accessible as a simplicial category\begin{footnote}{There are
    further conditions that one could ask for, such as the
    accessibility of the class of isofibrations, but we shall not
    consider these here: see
    \cite{BourkeLack-AccInftyCosmoi}.}\end{footnote}, whilst a
cosmological functor will be called accessible if it is so as a simplicial functor.  

Our main insight here is that many of the $\infty$-cosmoi that arise
in practice are \SSet-accessible, as are the cosmological functors between
them.  Thus the following specialisations of our main results are
broadly applicable.  We remind the reader that in this context the
shrinking morphisms are certain surjective equivalences of quasicategories --- see Examples~\ref{ex:shrinking}.

\begin{itemize}
\item  Let $U\colon\cb\to\ca$ be an accessible cosmological functor between \SSet-accessible $\infty$-cosmoi.  Then $U$ has a left shrink-adjoint.
\item If $\ca$ is a \SSet-accessible $\infty$-cosmos then it has 
  shrink-colimits, and in particular flexibly-weighted homotopy colimits (see Example~\ref{ex:colimits-cosmoi}.)
\end{itemize}

The first examples of \SSet-accessible $\infty$-cosmoi come from the following result, the first part of which is due to Riehl and Verity \cite{Riehl2019Elements}.
\begin{proposition}
\begin{enumerate}
\item Let $\ca$ be a model category that is enriched over the Joyal
  model structure on simplicial sets, and in which every fibrant
  object is cofibrant.  Then the simplicial subcategory
  $\ca\fib$ spanned by its fibrant objects is an
  $\infty$-cosmos, and is closed in $\ca$ under flexible limits.
\item If $\ca$ is furthermore a \emph{combinatorial} model category
  then $\ca\fib$ is a \emph{\SSet-accessible} $\infty$-cosmos and the
  inclusion $\ca\fib\hookrightarrow \ca$ is an accessible embedding.
\end{enumerate}
\end{proposition}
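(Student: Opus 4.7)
Part (1) is essentially due to Riehl and Verity, so the plan is to indicate the standard verifications. Since every fibrant object in $\ca$ is cofibrant by hypothesis, for $A,B \in \ca\fib$ the pushout-product (SM7) axiom implies that the hom-object $\ca(A,B)$ is fibrant in the Joyal model structure on $\SSet$, i.e.\ a quasicategory. Taking as isofibrations the fibrations of $\ca$ between fibrant objects, the remaining $\infty$-cosmos closure axioms --- existence of products, simplicial cotensors, pullbacks of isofibrations, limits of countable towers of isofibrations, and splittings of idempotents --- are the standard closure properties of fibrant objects and fibrations in a simplicial model category. Since flexible weights on a small simplicial category are built cellularly out of the operations guaranteed by these axioms (see Example~\ref{ex:enough-SSet}), $\ca\fib$ is moreover closed in $\ca$ under flexibly-weighted limits.

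For part (2) the plan is to apply Corollary~\ref{cor:recognition} to the inclusion $U\colon \ca\fib \hookrightarrow \ca$ viewed as an $\SSet$-functor. This $U$ is conservative, being fully faithful, and the $\SSet$-category $\ca$ is locally presentable --- in particular $\SSet$-accessible --- since $\ca$ is a combinatorial simplicial model category. Next, the underlying category $(\ca\fib)_0$ is, by definition, the class of objects in $\ca_0$ having the right lifting property with respect to a (chosen) set $J$ of generating trivial cofibrations; the theory of injectivity classes in locally presentable categories (\cite{Adamek1994Locally}) then implies that $(\ca\fib)_0$ is accessible and accessibly embedded in $\ca_0$, so $(\ca\fib)_0$ is accessible and $U_0$ is an accessible functor. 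Finally, the standard simplices $\{\Delta^n : n \in \bbn\}$ form a strong generator of $\SSet_0$ by finitely presentable objects; powers by $\Delta^n$ exist in $\ca$ because $\ca$ is simplicial, and by SM7 a power of a fibrant object by any simplicial set is again fibrant, so these powers exist in $\ca\fib$ and are preserved by $U$. Corollary~\ref{cor:recognition} now yields that $\ca\fib$ is accessible as an $\SSet$-category.

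To see that $U$ itself is accessible, I would inspect the proof of Corollary~\ref{cor:recognition}: the uniformisation theorem there produces a single regular cardinal $\lambda$ for which $U_0$ and the relevant power functors are simultaneously $\lambda$-accessible. Combined with the fact that $\ca$ and $\ca\fib$ are $\cg$-powered (so that conical $\lambda$-filtered colimits coincide with $\SSet$-enriched $\lambda$-filtered colimits), this upgrades $U$ to a $\lambda$-accessible $\SSet$-functor, which is the sense of ``accessible embedding'' required.

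I expect the main technical point to be the invocation that $(\ca\fib)_0$ is accessible and accessibly embedded in $\ca_0$: while standard for injectivity classes, it uses combinatoriality essentially --- one needs $J$ to be a \emph{set} rather than a proper class. All other steps are a bookkeeping application of the recognition criterion together with the compatibility of SM7 with simplicial cotensors.
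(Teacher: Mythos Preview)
Your proposal is correct and follows essentially the same route as the paper: cite Riehl--Verity for part~(1), then for part~(2) verify the hypotheses of Corollary~\ref{cor:recognition} by showing $\ca$ is $\SSet$-accessible, $(\ca\fib)_0 = \Inj(J)$ is accessible and accessibly embedded via the injectivity-class theorem, and the inclusion preserves the relevant powers. The only cosmetic difference is that the paper establishes $\SSet$-accessibility of $\ca$ via Proposition~\ref{prop:equivalent}(3), observing that each $(X\pitchfork-)_0$ is accessible because it has the copower functor as left adjoint, whereas you appeal directly to local presentability of $\ca$ as an $\SSet$-category; both arguments are valid and amount to the same thing.
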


\begin{proof}
  Part (1) holds by Proposition E.1.1 of \cite{Riehl2019Elements}.
  In Part (2), $\ca_0$ is locally presentable and \ca is cocomplete as
  a $\cv$-category by assumption.  Since for each $X \in \SSet$ the
  functor $(X\pitchfork-)_0 \colon\ca_0 \to \ca_0$ has left adjoint
  $(X.-)_0$ given by taking copowers (or tensors) it is accessible.
  Therefore $\ca$ is accessible as a $\SSet$-category by
  Proposition~\ref{prop:equivalent}. Since $\ca\fib$ is closed in
  $\ca$ under flexible limits, it is closed under powers.
Since $\ca$ is combinatorial, $(\ca\fib)_0 = \Inj(J) \hookrightarrow
\ca_0$ where $J$ is the set of generating trivial cofibrations in
$\ca$.  Now by Theorem~4.8 of \cite{Adamek1994Locally} $\Inj(J)$ is
accessible and accessibly embedded, so by
Corollary~\ref{cor:recognition} we conclude that $\ca\fib$ is
accessible and therefore a \SSet-accessible $\infty$-cosmos.
\end{proof}

As explained in Appendix~E of \cite{Riehl2019Elements}, one obtains as instances of the above the $\infty$-cosmoi of quasicategories, complete Segal spaces, Segal categories and $\Theta_n$-spaces.  Since all of the defining model structures are combinatorial, it follows furthermore that each of these $\infty$-cosmoi is \SSet-accessible. 

The above $\infty$-cosmoi can be regarded as basic.  In future work \cite{BourkeLack-AccInftyCosmoi}
we shall vastly extend the scope of these examples by establishing
that \emph{accessible $\infty$-cosmoi} ($\SSet$-accessible $\infty$-cosmoi satisfying natural additional
properties) are stable under a variety of constructions, such as the
passage from an $\infty$-cosmos $\ca$ to the $\infty$-cosmos
$\Cart(\ca)$  of cartesian fibrations therein.
For (not necessarily accessible) $\infty$-cosmoi this was done in Chapter~6 of \cite{Riehl2019Elements}, and it remains to add accessibility to the mix.  (The corresponding stability results for the appropriate class of accessible $2$-categories have been established in \cite{Bourke2019Accessible}.)  

In the present paper we content ourselves with describing two examples
of \SSet-accessible $\infty$-cosmoi built from the quasicategories example: the $\infty$-cosmoi of quasicategories with a class of limits, and the $\infty$-cosmos of cartesian fibrations of quasicategories.  In order to construct these $\infty$-cosmoi systematically, we shall make use of the generalised sketch categories $C|\cc$ of Makkai \cite{Makkai1997Sketches1}, which we now recall.

Given a category $\cc$ and object $C \in \cc$ we form the category
$C|\cc$, an object of which consists of an object $A \in \cc$ together
with a subset $A_C \subseteq \cc(C,A)$, which we often refer to as a
{\em marking}.  A morphism $f\colon (A,A_C) \to (B,B_C)$ consists of a
morphism $f\colon A \to B$ in $\cc$ such that if $x \in A_C$ then $f \circ x \in B_C$.  The forgetful functor $C|\cc \to \cc$ functor has a left adjoint equipping $A$ with the empty marking $(A,\varnothing)$.  

The following straightforward result is established in Item (9) of Section (1) of \cite{Makkai1997Sketches1}, except that Makkai works with locally {\em finitely} presentable categories.   In its present form, it is a special case of Proposition 2.2 of \cite{Isaev}.
\begin{proposition}
Let $\cc$ be locally presentable and $C\in\cc$.  Then $C|\cc$ is also locally presentable and the forgetful functor $C|\cc \to \cc$ is accessible.
\end{proposition}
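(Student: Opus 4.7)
The statement splits cleanly into (i) showing that the forgetful functor $U\colon C|\cc\to\cc$ creates all colimits, which simultaneously yields cocompleteness of $C|\cc$ and preservation of filtered colimits, and (ii) producing a set of $\lambda$-presentable strong generators of $C|\cc$ for a suitably chosen $\lambda$, which together with (i) gives local $\lambda$-presentability.

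\textbf{Step (i): creation of colimits.} Given a diagram $D\colon\ci\to C|\cc$ with $D(i)=(A_i,(A_i)_C)$, form $A=\colim_i A_i$ in $\cc$ with coprojections $\iota_i$, and mark $A$ by $A_C := \bigcup_i \iota_i\circ (A_i)_C \subseteq \cc(C,A)$. A morphism $(A,A_C)\to(B,B_C)$ in $C|\cc$ is an underlying $f\colon A\to B$ with $f(A_C)\subseteq B_C$; by the universal property of $A$ in $\cc$ such an $f$ is the same as a compatible family $(f_i\colon A_i\to B)$, and the marking condition unwinds to $f_i((A_i)_C)\subseteq B_C$ for each $i$. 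This is precisely the data of a cocone on $D$ in $C|\cc$, so $(A,A_C)$ is a colimit created by $U$. In particular $U$ preserves filtered colimits.

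\textbf{Step (ii): local presentability.} Pick a regular cardinal $\lambda$ such that $\cc$ is locally $\lambda$-presentable and $C$ is $\lambda$-presentable in $\cc$, and let
\[ \cg := \{(A,S) : A\in\cc \text{ is }\lambda\text{-presentable},\ S\subseteq\cc(C,A),\ |S|<\lambda\}, \]
which is essentially small. For $\lambda$-presentability of $(A,S)\in\cg$: given a $\lambda$-filtered colimit $(B,B_C)=\colim_j(B_j,(B_j)_C)$ and $f\colon(A,S)\to(B,B_C)$, factor $f$ through some $B_{j_0}$ by $\lambda$-presentability of $A$; for each $s\in S$, $\lambda$-presentability of $C$ lets the membership $f(s)\in B_C=\bigcup_j \iota_j((B_j)_C)$ be realised at some stage $j_s$, and $\lambda$-filteredness (applied to the fewer-than-$\lambda$ many $j_s$) collects these into a single stage $j$ at which $f$ genuinely lifts to $C|\cc$; uniqueness of such a lift follows from $\lambda$-presentability of $A$. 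For generation, consider the diagram $\cj$ of pairs $(g\colon A\to B,\,S)$ with $(A,S)\in\cg$ and $g(S)\subseteq B_C$; this is $\lambda$-filtered (closure of $\lambda$-presentables in $\cc$ under $<\lambda$-small colimits, and regularity of $\lambda$ for unioning markings of size $<\lambda$), its underlying $\cc$-colimit is $B$ by the canonical presentation, and its induced marking equals $B_C$ because every $y\in B_C$ factors through a $\lambda$-presentable by $\lambda$-presentability of $C$, whence the pair $(g,\{x\})$ witnesses $y$ in the colimit marking. Cocompleteness plus this generation and presentability data gives local $\lambda$-presentability.

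\textbf{Main obstacle.} The only delicate verification is the $\lambda$-presentability of objects of $\cg$: this is where both hypotheses enter essentially, with $\lambda$-presentability of $C$ needed to reduce each marking-membership condition $f(s)\in B_C$ to a single-stage condition in the $\lambda$-filtered colimit, and the bound $|S|<\lambda$ needed to gather these fewer-than-$\lambda$ conditions at one common stage. Everything else is routine bookkeeping with filtered colimits in $\cc$.
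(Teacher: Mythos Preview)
Your proof is correct. The paper itself does not give a proof of this proposition: it simply records that the result is established by Makkai in \cite{Makkai1997Sketches1} for the locally finitely presentable case, and that the general case is a special instance of Proposition~2.2 of \cite{Isaev}. So your argument is not a comparison target so much as an actual proof where the paper provides only a citation.

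That said, your direct argument is exactly the expected one and is carried out cleanly. Step~(i) is the routine observation that colimits in $C|\cc$ are computed in $\cc$ with the induced (union) marking, which immediately gives cocompleteness and preservation of all colimits by the forgetful functor. Step~(ii) correctly identifies the key point: one must choose $\lambda$ large enough that $C$ is $\lambda$-presentable, so that a marking condition $f(s)\in B_C$ can be pushed down to a finite stage of a $\lambda$-filtered colimit, and one must bound $|S|<\lambda$ so that the fewer-than-$\lambda$ such conditions can be collected at a common stage. Your verification that the canonical diagram over $\cg$ is $\lambda$-filtered and has colimit $(B,B_C)$ is also fine; the finality of the forgetful functor to $\cc_\lambda/B$ (via the section $g\mapsto(g,\varnothing)$) makes the underlying colimit identification immediate, and $\lambda$-presentability of $C$ is again what guarantees every element of $B_C$ is hit. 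One small terminological point: what you actually establish in Step~(i) is that $C|\cc$ has colimits and $U$ preserves them, which is all that is needed; calling this ``creation'' is slightly loose but harmless here.
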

If $\cj$ is a set of morphisms in $C|\cc$ then we may form the category $\Inj(\cj)$ of
$\cj$-injectives in $C|\cc$.  We follow Makkai's terminology in calling such a category $\Inj(\cj)$ a \emph{doctrine} in $\cc$. 

\begin{corollary}\label{cor:doctrine}
Let $\cc$ be locally presentable, with $C\in\cc$ and $\cj$ a set of
morphisms in $C|\cc$.  Then $\Inj(\cj)$ is accessible, and the composite forgetful functor $U\colon \Inj(\cj) \to \cc$ is accessible.
\end{corollary}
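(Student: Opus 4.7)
The plan is to combine the Proposition immediately preceding the statement with two standard tools from the theory of locally presentable and accessible categories, so that the corollary becomes essentially an immediate consequence. First, the preceding Proposition already delivers that $C|\cc$ is locally presentable and that the forgetful functor $V\colon C|\cc \to \cc$ preserves filtered colimits; in particular $V$ is an accessible functor between accessible categories (indeed, one that is $\lambda$-accessible for every regular $\lambda$).

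Next, we apply Theorem~4.8 of \cite{Adamek1994Locally} (the standard injectivity theorem): since $\cj$ is a \emph{set} of morphisms in the locally presentable category $C|\cc$, its injectivity class $\Inj(\cj)$ is accessible and the inclusion $J\colon \Inj(\cj) \hookrightarrow C|\cc$ is an accessible (and fully faithful) embedding. This immediately gives the first assertion of the corollary, namely that $\Inj(\cj)$ is accessible.

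For the second assertion, the forgetful functor $U$ is by construction the composite $V \circ J\colon \Inj(\cj) \to \cc$. To see that this composite is accessible, we invoke the uniformization theorem \cite[Theorem~2.19]{Adamek1994Locally} to choose a single regular cardinal $\lambda$ for which each of $\Inj(\cj)$, $C|\cc$ and $\cc$ is $\lambda$-accessible and both $J$ and $V$ preserve $\lambda$-filtered colimits; then $U$ preserves $\lambda$-filtered colimits between $\lambda$-accessible categories, so is $\lambda$-accessible.

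There is no real obstacle here: the preceding Proposition is precisely the nontrivial input, and once it is available, the result reduces to standard bookkeeping about injectivity classes and composites of accessible functors. The only mildly delicate point is aligning the accessibility ranks of $J$ and $V$, which is exactly what the uniformization theorem exists to handle.
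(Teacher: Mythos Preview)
Your proposal is correct and follows essentially the same route as the paper: use the preceding Proposition to see that $C|\cc$ is locally presentable with accessible forgetful functor to $\cc$, invoke \cite[Theorem~4.8]{Adamek1994Locally} to obtain that $\Inj(\cj)$ is accessible and accessibly embedded in $C|\cc$, and then observe that the composite of the two accessible functors is accessible. The paper simply asserts this last step (``accessible since its two components are''), whereas you spell out the uniformization argument behind it; this is a harmless elaboration rather than a different approach.
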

\begin{proof}
By the preceding proposition $C|\cc$ is locally presentable whence, by Theorem 4.8 of \cite{Adamek1994Locally}, the full subcategory $\Inj(\cj) \hookrightarrow C|\cc$ is accessible and accessibly embedded.  The composite $\Inj(\cj) \to C|\cc \to \cc$ is accessible since its two components are.
\end{proof}
We now combine this with simplicial enrichment.
\begin{corollary}\label{cor:doctrineEnriched}
Let $\ca$ be an $\infty$-cosmos and $U\colon \ca \to \cc$ a
conservative simplicially enriched functor preserving flexible limits
to a locally presentable simplicially enriched category.  Suppose that $U_0\colon \ca_0\to\cc_0$ has the form $\Inj(\cj)\to
C|\cc_0\to\cc_0$ for some $C\in\cc$ and set $\cj$ of morphisms in
$C|\cc_0$. 
Then $\ca$ is accessible as a simplicial
category, and so is a \SSet-accessible $\infty$-cosmos.
\end{corollary}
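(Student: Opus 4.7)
The plan is to apply Corollary~\ref{cor:recognition} with $\cg$ being a strong generator of $\SSet_0$ consisting of finitely presentable objects, say $\cg = \{\Delta^n : n \geq 0\}$. Then the conclusion amounts to verifying that: (a) both $\ca$ and $\cc$ have powers by objects in $\cg$; (b) $U$ preserves these powers; (c) $\cc$ is accessible as a simplicial category; (d) $\ca_0$ is accessible; and (e) $U_0$ is accessible.

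First I would dispose of (d) and (e) using Corollary~\ref{cor:doctrine}. The hypothesis that $\cc$ is locally presentable as a simplicial category implies in particular that $\cc_0$ is locally presentable as an ordinary category. Since $U_0$ is assumed to factor as $\Inj(\cj) \to C|\cc_0 \to \cc_0$ for some object $C$ and some set $\cj$ of morphisms in $C|\cc_0$, Corollary~\ref{cor:doctrine} applies directly to yield that $\Inj(\cj) = \ca_0$ is accessible and that the composite $U_0$ is accessible. Condition (c) is then immediate: locally presentable $\cv$-categories are, in particular, accessible $\cv$-categories.

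For (a) and (b), I would invoke the definition of $\infty$-cosmos: every $\infty$-cosmos admits cotensors by arbitrary simplicial sets, and these are among the flexible limits; in particular, $\ca$ has powers by each $\Delta^n \in \cg$. Since $U$ preserves flexible limits by hypothesis, it preserves these powers. The simplicial category $\cc$, being locally presentable (hence cotensored), also has powers by every object in $\cg$. With all five hypotheses checked, Corollary~\ref{cor:recognition} yields that $\ca$ is accessible as a simplicial category, and since $\ca$ is already an $\infty$-cosmos it is therefore an accessible $\infty$-cosmos as required.

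The main step is really the initial invocation of Corollary~\ref{cor:doctrine} to convert the doctrine presentation into ordinary accessibility data; once $\ca_0$ and $U_0$ are known to be accessible, the enriched-to-ordinary transfer provided by Corollary~\ref{cor:recognition} carries the result across, provided one remembers to check that the strong generator $\cg$ of $\SSet_0$ consists of objects by which cosmological powers exist and are preserved --- a condition which is automatic in any $\infty$-cosmos.
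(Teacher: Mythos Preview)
Your proof is correct and follows exactly the same route as the paper: invoke Corollary~\ref{cor:doctrine} to get accessibility of $\ca_0$ and $U_0$, then apply Corollary~\ref{cor:recognition}, using that flexible limits include powers. The paper's proof is a one-line summary of precisely the verification you have spelled out.
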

\begin{proof}
Since flexible limits include powers, this follows immediately from Corollaries~\ref{cor:recognition} and~\ref{cor:doctrine}.
\end{proof}

In the following two examples, we consider the $\infty$-cosmoi
$\QCat_{D}$ of quasicategories with $D$-limits, and $\Cart(\QCat)$ 
of cartesian fibrations respectively.  

These are shown to be $\infty$-cosmoi in Propositions~6.3.13 and~6.3.14, respectively,
of \cite{Riehl2019Elements}; moreover the forgetful simplicial
functors $\QCat_{D} \to \SSet$ and $\Cart(\QCat) \to \SSet^{\atwo}$
preserve flexible limits and are conservative.  To prove that they are \SSet-accessible
$\infty$-cosmoi, it suffices by Corollary~\ref{cor:doctrineEnriched} to
describe the categories $(\QCat_{D})_0$ and $\Cart(\QCat)_{0}$ as
doctrines, respectively, in $\SSet_0$ and in $(\SSet^{\atwo})_0$.

\subsubsection{Quasicategories with limits of a given class}
To begin with, we treat the simpler case of quasicategories with a
terminal object.  To this end, consider $\Delta^0|\SSet$, whose
objects are pairs $(X,U)$ where $X$ is a simplicial set and $U
\subseteq X_0$ a distinguished subset of marked $0$-simplices, and
whose morphisms are simplicial maps preserving marked $0$-simplices.
We shall describe the category $\QCat_T$ of quasicategories admitting
a terminal object and morphisms preserving them as the small 
injectivity class in $\Delta^0|\SSet$ consisting of those $(X,U)$ for which $X$ is a quasicategory and $U\subseteq X_0$ is the set of terminal objects in $X$.
To begin with, we view the inner horn inclusions
\begin{equation}\label{eq:horns}
\{ \Lambda_{k}[n]\colon \Lambda^n_k \to \Delta^n \mid 0 < k < n\}
\end{equation}
 as morphisms of $\Delta^0|\SSet$ in which no $0$-simplex is marked in their source or target --- then $(X,U)$ is injective with respect to the inner horns just when $X$ is a quasicategory.  Now recall (see Definition 4.1 of \cite{Joyal2002} for the dual case of an initial object), that a $0$-simplex $a$ of a quasicategory $X$ is \emph{terminal} if the solid part of each diagram
\begin{equation}\label{eq:terminal}
\xymatrix{
\partial \Delta^n \ar[d]_{j_{n}} \ar[rr]^{f}  && X\\
\Delta^n \ar@{.>}[urr] 
}
\end{equation}
in which $f\colon \partial \Delta^n \to X$ has value $a$ at the final vertex $n \in \partial \Delta^n$ admits a filler.  Accordingly, if $(X,U)$ is a quasicategory with marked $0$-simplices, then \emph{each marked $0$-simplex is terminal} precisely if the solid part of each diagram 
\begin{equation}\label{eq:boundary}
\xymatrix{
(\partial \Delta^n,\{n\}) \ar[d]_{j_{n}} \ar[rr]^{}  && (X,U)\\
(\Delta^n,\{n\}) \ar@{.>}[urr] 
}
\end{equation}
 in $\Delta^0|\SSet$ admits a filler.
The existence of a terminal object can then be expressed via injectivity of $(X,U)$ with respect to the morphism $$(\varnothing,\varnothing) \to (\Delta^0,\{0\})$$ where the unique $0$-simplex $0$ of $\Delta^0$ is marked.  Combining this morphism with the inner horn inclusions \eqref{eq:horns} and the marked boundary inclusions \eqref{eq:boundary} an injective object $(X,U)$ consists of a quasicategory $X$ together with a non-empty subset of terminal objects in $X$.  If we stopped here, the evident full inclusion $\QCat_T \to \Delta^0|\SSet$ would not however be essentially surjective on objects; for this we require our injectives $(X,U)$ to have the property that $U$ consists of \emph{all} terminal objects in $X$.  Since each terminal object is equivalent to any other, it therefore suffices to add the repleteness condition that each object equivalent to one in $U$ belongs to $U$.  To this end, we consider the nerve $J$ of the free isomorphism which has two $0$-simplices $0$ and $1$.  The repleteness condition is captured by the morphism
 \begin{equation*}
 \xymatrix{
 (J,\{0\}) \ar[rr]^{\id} &&  (J,\{0,1\})
 }
 \end{equation*}
 whose underlying simplicial map is the identity.
 
It is straightforward to extend this example to quasicategories with limits of shape $D$ for $D$ a general simplicial set.  To see this, recall the join $\star\colon \SSet \times \SSet \to \SSet$ of simplicial sets.  As explained in \cite{Joyal2002}, there is a natural inclusion $D \hookrightarrow A \star D$, whereby for fixed $D$, we obtain a functor $-\star D\colon \SSet \to D/\SSet$ with this value at $A$.  This has a right adjoint $D/\SSet \to \SSet$ which sends $t\colon D \to X$ to a simplicial set $X/t$. Now the limit of $t$ is defined to be a terminal object $\Delta^0 \to X/t$.  By adjointness, this amounts to a morphism $$\Delta^0 \star D \to X$$ extending $t$ along the inclusion $j\colon D \to \Delta^0 \star D$, which should be thought of as cones over $t$, satisfying lifting properties obtained, by adjointness, from those of \eqref{eq:terminal}.  

We work this time with the locally finitely presentable category
$(\Delta^0 \star D)| \SSet$, whose objects $(X,U)$ are simplicial
sets $X$ equipped with a set $U$ of marked cones $\Delta^0 \star D
\to X$, and whose morphisms are simplicial maps preserving marked
cones.
There is a full embedding $(\QCat_{D}) \hookrightarrow (\Delta^0
\star D)| \SSet$ sending a quasicategory with limits of shape $D$ to
its underlying simplicial set with all limit cones marked.
Now, given $(X,U) \in (\Delta^0 \star D)| \SSet$, the condition that a cone of $U$ is a limit cone amounts to asking that each 
\begin{equation}
\xymatrix{
(\partial \Delta^n \star D, \{n \star D\}) \ar[d]_{j_{n} \star D} \ar[rr]^{}  && (X,U)\\
( \Delta^n \star D,  \{n \star D\}) \ar@{.>}[urr] 
}
\end{equation}
has a filler. Similarly, injectivity with respect to the inclusion
\[j\colon (D, \varnothing) \to (\Delta^0 \star D,\{\id\}) \]
asserts that each diagram $t\colon D \to X$ admits a limit, with
limit cone in $U$.
The repleteness condition capturing the fact that each limit cone belongs to $U$ is expressed by injectivity with respect to
 \begin{equation*}
 \xymatrix{
 (J \star D,\{0 \star D\}) \ar[rr]^{\id} &&  (J \star D,\{0 \star D,1 \star D\})~,
 }
 \end{equation*}
 while finally we equip the inner horns
\[\{ \Lambda_{k}[n]\colon \Lambda^n_k \to \Delta^n\colon  0 < k < n\} \]
with the empty markings to encode the fact that the $X$ in $(X,U)$ is a quasicategory.

\subsubsection{Cartesian fibrations}
An inner fibration $p\colon X \to Y$ of simplicial sets is a morphism
having the right lifting property with respect to the inner horn
inclusions. Such an inner fibration is said to be a \emph{cartesian fibration} if each $1$-simplex $f\colon x \to py$ in $Y$ admits a lifting along $p$ to a \emph{p-cartesian} $1$-simplex $f^{\prime}\colon x^{\prime} \to y \in X$, where a morphism $g\colon a \to b \in X$ is said to be $p$-cartesian if each diagram
\begin{equation*}
\xymatrix{
\Delta^1 \ar[r] \ar[dr]_i \ar@/^1pc/[rr]^g & \Lambda^n_n \ar[d] \ar[r] & X \ar[d]^{p} \\
& \Delta^n \ar@{.>}[ur] \ar[r] & Y
}
\end{equation*}
where $i$ includes the vertices of $\Delta^1$ as the vertices $n-1,n\in\Delta^n$,
admits a filler as depicted.  A commutative square is said to be a morphism of cartesian fibrations if it preserves cartesian $1$-simplices.  As such, we obtain a category $\Cart(\SSet)$ of cartesian fibrations.

Let $\SSet^{\atwo}$ denote the arrow category.  Then an object of
$(\id\colon \Delta^1 \to \Delta^1)|\SSet^{\atwo}$ is a morphism
$p\colon X \to Y$ of simplicial sets together with a subset $U
\subseteq X_1$ of marked $1$-simplices, whilst a morphism is a
commutative square whose domain component preserves marked
$1$-simplices; we write $p\colon(X,U)\to Y$ for such an object.
We wish to describe $\Cart(\SSet)$ as a small injectivity class in $\id_{\Delta^1}|\SSet^{\atwo}$; that is, we shall describe a set of morphisms $\cj$ such that $p\colon (X,U) \to Y$ is $\cj$-injective if and only if $p$ is a cartesian fibration with $U$ the set of all $p$-cartesian $1$-simplices.  Note that $p$ is an inner fibration just when it is injective in $\SSet^{\atwo}$ with respect to each square 
\begin{equation*}
\xymatrix{
\Lambda^n_k \ar[d] \ar[r] & \Delta^n \ar[d]^{1} \\
\Delta^n \ar[r]_{1} & \Delta^n
}
\end{equation*}
with $0<k<n$. (Here we are employing the trick (see, for instance, Lemma~1 of
\cite{Bourke2019Equipping}) that $g$ has the right lifting property
with respect to $f$ if and only if $g$ is injective in the arrow
category with respect to the map $f \to 1_{\cod(f)}$ determined by $f$
and $1_{\cod(f)})$.
Thus equipping the above squares with the empty markings captures as injectives those $p\colon (X,U) \to Y$ with $p$ an inner fibration.  
To express the requirement that the elements of $U$ be cartesian $1$-simplices we use injectivity with respect to the squares
\begin{equation*}
\xymatrix{
(\Lambda^n_n,n- 1 \to n) \ar[d] \ar[rr] && (\Delta^n,n-1 \to n) \ar[d]^{1} \\
\Delta^n \ar[rr]_{1} && \Delta^n
}
\end{equation*}
whose sources have the unique $1$-simplex $n-1 \to n$ marked.
The existence of cartesian liftings is expressed by injectivity against
\begin{equation*}
\xymatrix{
(\Delta^0,\varnothing) \ar[d]^{d_0} \ar[rr]^{d_0} && (\Delta^1,0 \to 1) \ar[d]^{1} \\
\Delta^1 \ar[rr]_{1} && \Delta^1.
}
\end{equation*}
In order to ensure that each cartesian $1$-simplex belong to $U$ we add a repleteness condition based on the following lemma.
\begin{lemma}\label{lem:fibration}
Let $p\colon X \to Y$ be a cartesian fibration. Let $f\colon a \to b$
and $g\colon c \to b$ be 1-simplices in $X$, with $g$ a cartesian lifting of $(pf\colon pa \to pb,b)$ so that we obtain a $2$-simplex
\begin{equation*}
\xymatrix{
a \ar[dr]_{f} \ar[r]^{\alpha} & c \ar[d]^{g} \\
& b}
\end{equation*}
where $p\alpha\colon pa \to pa$ is degenerate.  Then $f\colon a \to b$ is cartesian if and only if $\alpha$ is an equivalence.
\end{lemma}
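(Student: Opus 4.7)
The plan is to establish the two directions separately, using the defining lifting properties of cartesian 1-simplices as the central tool.

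For the $(\Leftarrow)$ direction, suppose $\alpha$ is an equivalence. We need to verify that $f$ satisfies the cartesian filling property for every outer horn $\Lambda^n_n \to X$ whose $(n-1,n)$-edge is $f$, together with a fixed extension $\Delta^n \to Y$ of its projection. The key idea is to use the 2-simplex $\sigma$ as glue: assembling $\sigma$ with the given horn along the $f$-edge yields a cellular shape that, after filling further horns made available by the equivalence $\alpha$, reduces to a lifting problem whose terminal edge is $g$. Cartesianness of $g$ then provides the required filler. Packaged more cleanly, this argument is the standard one showing that cartesian 1-simplices in a quasicategory are closed under pre-composition with equivalences, so the existence of the 2-simplex $\sigma$ exhibiting $f = g \circ \alpha$ (in the weak sense) directly implies $f$ is cartesian.

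For the $(\Rightarrow)$ direction, suppose $f$ is cartesian. First I produce an inverse-like morphism $\beta\colon c \to a$ by applying the cartesian property of $f$ to the $\Lambda^2_2$-horn whose $(1,2)$-edge is $f$ and $(0,2)$-edge is $g$, together with the degenerate 2-simplex $s_1(pf)$ in $Y$ as the downstairs filler; this produces a 2-simplex $\tau$ with a new edge $\beta\colon c \to a$ such that $p\beta$ is degenerate on $pa$ and $f\beta$ is the $g$-edge of $\tau$. It remains to prove that the composites $\alpha\beta$ and $\beta\alpha$ are each equivalent to an identity. For $\beta\alpha \simeq 1_a$, I would assemble $\sigma$ and $\tau$ into a $\Lambda^3_3$-horn at the vertex sequence $(a, c, a, b)$, then use cartesianness of $f$ to fill it to a 3-simplex in $X$; inspecting the missing face supplies a 2-simplex with edges $\beta\alpha$, $1_a$, and $\beta\alpha$ over degenerate data in $Y$, so that in the fiber quasicategory $X_{pa}$ the morphism $\beta\alpha$ is homotopic to $1_a$. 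The symmetric argument using cartesianness of $g$ yields $\alpha\beta \simeq 1_c$, and hence $\alpha$ is an equivalence.

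The main obstacle is the second direction, specifically the verification that $\alpha$ and $\beta$ are mutually inverse up to the appropriate notion of homotopy. The bookkeeping for the 3-simplex constructions needs care to ensure that all projected data in $Y$ is genuinely degenerate so that cartesianness of $f$ and $g$ actually applies. An alternative route, which avoids this combinatorics, is to invoke the general principle that in a cartesian fibration any two cartesian lifts of a given morphism with a common target are canonically equivalent via a map in the fiber; once this principle is in place, the comparison morphism must be $\alpha$ (up to homotopy), and the result is immediate.
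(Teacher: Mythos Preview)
Your approach is essentially correct but takes a genuinely different route from the paper's proof. The paper gives a two-line argument by citing two results from Lurie's \emph{Higher Topos Theory}: since $p\alpha$ is degenerate (hence an equivalence), Lemma~2.4.1.5 of \cite{Lurie} says that $\alpha$ is an equivalence if and only if $\alpha$ is $p$-cartesian; and since $g$ is cartesian, Proposition~2.4.1.7 of \cite{Lurie} (the two-out-of-three property for cartesian edges in a $2$-simplex) says that $\alpha$ is cartesian if and only if $f$ is. Chaining these gives both directions at once. Your proposal instead unpacks these facts into explicit horn-filling arguments, which is more self-contained but considerably longer; in effect your ``alternative route'' at the end is the paper's actual argument.

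A few bookkeeping points to tighten if you pursue the direct approach. The degenerate $2$-simplex downstairs should be $s_0(pf)$, not $s_1(pf)$: it has vertices $pa,pa,pb$ with the $0\to 1$ edge degenerate. In your $\Lambda^3_3$-horn with vertex sequence $(a,c,a,b)$ you must also supply the $d_1$ face, which is $s_0(f)$ (forcing the $0\to 2$ edge to be the degeneracy at $a$); the missing $d_3$ face then has edges $\alpha$, $\beta$, and $1_a$, exhibiting $\beta\circ\alpha\simeq 1_a$, not ``$\beta\alpha$, $1_a$, and $\beta\alpha$'' as you wrote. The symmetric horn with vertices $(c,a,c,b)$ and terminal edge $g$ gives $\alpha\circ\beta\simeq 1_c$. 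Finally, note that $X$ need not be a quasicategory in the hypothesis, so phrase the $(\Leftarrow)$ direction as closure of $p$-cartesian edges under composition with equivalences rather than appealing to the ambient quasicategory.
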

\begin{proof}
The degeneracy $p\alpha$ is an equivalence. Thus, by Lemma 2.4.1.5 of
\cite{Lurie}, $\alpha$ is an equivalence if and only it is cartesian.
And now since $g$ is cartesian, by Proposition 2.4.1.7 of
\cite{Lurie}, $\alpha$ is cartesian if and only if $f$ is.
\end{proof}

Given this, consider the pushout below left
\begin{equation*}
\xymatrix{
\Delta^1 \ar[d] \ar[r]^{d_2} & \Delta^2 \ar[d]^{} \\
J \ar[r]_{} & (\Delta^2)_{0 \cong 1}
}
\hspace{2cm}
\xymatrix{
0 \ar[dr] \ar[r]^{\simeq} & 1 \ar[d]^{} \\
& 2}
\end{equation*}
which is the generic $2$-simplex with $0 \to 1$ an equivalence.  There is a unique morphism $(\Delta^2)_{0 \cong 1} \to \Delta^1$ sending the equivalence $0 \to 1$ to the degeneracy on $0$, and $2$ to $1$.  Then injectivity of $p\colon (X,U) \to Y$ against the square below left 
\begin{equation*}
\xymatrix{
((\Delta^2)_{0 \cong 1},1 \to 2) \ar[d] \ar[rr] && ((\Delta^2)_{0 \cong 1},\{1 \to 2, 0 \to 2\}) \ar[d]^{1} \\
\Delta^1 \ar[rr]_{1} && \Delta^1
}
\hspace{1cm}
\xymatrix{
a \ar[dr]_{f} \ar[r]^{\alpha} & c \ar[d]^{g} \\
& b}
\end{equation*}
asserts: for any $2$-simplex in $X$ as on the right above in which $g \in U$ and $\alpha$ is an equivalence sent by $p$ to a degeneracy, the $1$-simplex $f$ is in $U$. Since each morphism of $U$ is cartesian this implies, by Lemma~\ref{lem:fibration}, that all cartesian morphisms belong to $U$.  In this way, we obtain the category of cartesian fibrations $\Cart(\SSet) \hookrightarrow (\id\colon \Delta^1 \to \Delta^1)|\SSet^{\atwo}$ as the full subcategory of injectives.  

Finally, to encode the full subcategory $\Cart(\QCat) \hookrightarrow \Cart(\SSet)$ of cartesian fibrations between \emph{quasicategories} we need to further encode that the target $Y$ of $p\colon (X,U) \to Y$ is a quasicategory --- of course it then follows that $X$ is also a quasicategory, since $p$ is an inner fibration. To this end, we add 
 the injectivity condition
\begin{equation*}
\xymatrix{
(\varnothing,\varnothing) \ar[d] \ar[r]^{1} & (\varnothing,\varnothing) \ar[d]^{p} \\
\Lambda^k_n \ar[r] & \Delta^n
}
\end{equation*}
for each inner horn.


\begin{thebibliography}{10}

\bibitem{Adamek1994Locally}
J.~Ad{\'a}mek and J.~Rosick{\'y}.
\newblock {\em Locally presentable and accessible categories}, volume 189 of
  {\em London Mathematical Society Lecture Note Series}.
\newblock Cambridge University Press, Cambridge, 1994.

\bibitem{Bird1989Flexible}
G.~J. Bird, G.~M. Kelly, A.~J. Power, and R.~H. Street.
\newblock Flexible limits for {$2$}-categories.
\newblock {\em J. Pure Appl. Algebra}, 61(1):1--27, 1989.

\bibitem{Blackwell1989Two}
R.~Blackwell, G.~M. Kelly, and A.~J. Power.
\newblock Two-dimensional monad theory.
\newblock {\em J. Pure Appl. Algebra}, 59(1):1--41, 1989.

\bibitem{BorceuxQuinteiro}
F.~Borceux and C.~Quinteriro.
\newblock Enriched accessible categories.
\newblock {\em Bull. Austral. Math. Soc.}, 54(3):489--501, 1996.

\bibitem{Bourke2019Equipping}
John Bourke.
\newblock Equipping weak equivalences with algebraic structure. 
\newblock {\em Math. Z.}, 294(3-4):995--1019, 2020.

\bibitem{Bourke2019Accessible}
John Bourke. 
\newblock  Accessible aspects of 2-category theory.
\newblock {\em J. Pure Appl. Algebra}, 225(3):106519, 43 pages, 2021.

\bibitem{BourkeLack-AccInftyCosmoi}
  John Bourke and Stephen Lack.
  \newblock Accessible $\infty$-cosmoi.
  \newblock {\em J. Pure Appl. Algebra}, 227(5):107255, 28 pages, 2023.
  
\bibitem{Dold-shrinkable}
Albrecht Dold.
\newblock Partitions of unity in the theory of fibrations.
\newblock {\em Ann. of Math. (2)}, 78:223--255, 1963.

\bibitem{GPS}
R.~Gordon, A.~J. Power, and  R.~Street.
\newblock Coherence for tricategories.
\newblock {\em Mem. Amer. Math. Soc.}, 117(558):vi+81, 1995.

\bibitem{Hovey1999Model}
Mark Hovey.
\newblock {\em Model categories}, volume~63 of {\em Mathematical Surveys and
  Monographs}.
\newblock American Mathematical Society, Providence, RI, 1999.

\bibitem{Isaev}
Valery {Isaev}.
\newblock {Model category of marked objects}.
\newblock {\em arXiv e-prints}, page arXiv:1610.08459, Oct 2016.

\bibitem{Joyal2002}
Andr\'e Joyal. 
\newblock Quasi-categories and {K}an complexes.
\newblock {\em J. Pure Appl. Algebra}, 175(1-3):207--222, 2002.
\newblock Special volume celebrating the 70th birthday of Professor Max Kelly.

\bibitem{Kainen1971Weak}
Paul~C.~Kainen.
\newblock Weak adjoint functors.
\newblock {\em Math. Z.}, 122:1--9, 1971.


\bibitem{Kelly1982Basic}
G.~M. Kelly.
\newblock Basic concepts of enriched category theory.
\newblock {\em Repr. Theory Appl. Categ.}, (10):vi+137 pp. (electronic), 2005.
\newblock Originally published as LMS Lecture Notes 64, 1982.

\bibitem{Kelly1982Structures}
G.~M. Kelly.
\newblock Structures defined by finite limits in the enriched context. {I}.
\newblock {\em Cahiers Topologie G\'eom. Diff\'erentielle}, 23(1):3--42, 1982.

\bibitem{Kelly-limits}
G.~M. Kelly.
\newblock Elementary observations on {$2$}-categorical limits.
\newblock {\em Bull. Austral. Math. Soc.}, 39(2):301--317, 1989.

\bibitem{vcat}
G.~M. Kelly and Stephen Lack.
\newblock $\cv$-{C}at is locally presentable or locally bounded if
  $\cv$ is so.
\newblock {\em Theory Appl. Categ.}, 8:555--575, 2001.

\bibitem{qm2cat}
Stephen Lack.
\newblock A {Q}uillen model structure for 2-categories.
\newblock {\em $K$-Theory}, 26(2):171--205, 2002.

\bibitem{qmbicat}
Stephen Lack.
\newblock A {Q}uillen model structure for bicategories.
\newblock {\em $K$-Theory}, 33(3):185--197, 2004.

\bibitem{Lack2007Homotopy-theoretic}
Stephen Lack.
\newblock Homotopy-theoretic aspects of 2-monads.
\newblock {\em J. Homotopy Relat. Struct.}, 2(2):229--260, 2007.

\bibitem{LackTendas-Flat}
  Stephen Lack and Giacomo Tendas.
  \newblock Flat vs. filtered colimits in the enriched context.
  \newblock {\em Adv. Math.} 404:108381, 50 pages, 2022. 
  
\bibitem{LackRosicky2012}
Stephen Lack and Ji{\v{r}}{\'{\i}} Rosick{\'y}.
\newblock Enriched weakness.
\newblock {\em J. Pure Appl. Algebra}, 216(8-9):1807--1822, 2012.

\bibitem{Lurie}
Jacob Lurie.
\newblock {\em Higher topos theory}, volume 170 of {\em Annals of Mathematics
  Studies}.
\newblock Princeton University Press, Princeton, NJ, 2009.

\bibitem{CWM}
Saunders Mac Lane.
\newblock {\em Categories for the working mathematician}.
\newblock Springer-Verlag, New York, 1971.

\bibitem{Makkai1997Sketches1}
Michael Makkai.
\newblock Generalized sketches as a framework for completeness theorems. {I}.
\newblock {\em J. Pure Appl. Algebra}, 115(1):49--79, 1997.

\bibitem{PIE}
J.~Power and E.~Robinson.
\newblock A characterization of pie limits.
\newblock {\em Math. Proc. Cambridge Philos. Soc.}, 110(1):33--47, 1991.

\bibitem{Quillen1967Homotopical}
Daniel~G. Quillen.
\newblock {\em Homotopical algebra}.
\newblock Lecture Notes in Mathematics, No. 43. Springer-Verlag, Berlin, 1967.

\bibitem{Riehl2018On}
E.~Riehl and D.~Verity.
\newblock {On the construction of limits and colimits in $\infty$-categories}.
\newblock {\em arXiv e-prints}, page arXiv:1808.09835, Aug 2018.

\bibitem{Riehl2019Elements}
E.~Riehl and D.~Verity.
\newblock {\em Elements of $\infty$-category theory.}
\newblock Cambridge University Press, 2022.

\bibitem{Rosicky2007Factorization}
J.~Rosick\'{y} and W.~Tholen.
\newblock Factorization, fibration and torsion.
\newblock {\em J. Homotopy Relat. Struct.}, 2(2):295--314, 2007.

\end{thebibliography}

\end{document}